\documentclass[12pt, reqno]{amsart}
\usepackage{amsmath, amsthm, amscd, amsfonts, amssymb, graphicx, color}
\usepackage{mathtools}
\usepackage{microtype}
\usepackage{float}
\usepackage[table]{xcolor}
\usepackage{enumitem}
\usepackage{algorithm}
\usepackage{algorithmic}
\usepackage{listings}
\usepackage[bookmarksnumbered, colorlinks, plainpages]{hyperref}

\newtheorem{theorem}{Theorem}
\newtheorem{lemma}{Lemma}
\newtheorem{proposition}{Proposition}

\theoremstyle{definition}
\newtheorem{definition}{Definition}

\theoremstyle{remark}
\newtheorem{remark}{Remark}
\numberwithin{equation}{section}
\counterwithin{figure}{section}
\usepackage{enumitem}

\begin{document}
	\setcounter{page}{1}

	{\small }

	\centerline{}
	
	\centerline{}
	
	\title{Inverse Scattering by Diffracted Waves}
\author[G. Bao]{Gang Bao}
\address{School of Mathematical Sciences, Zhejiang University, Hangzhou 310027, Zhejiang, China. }
\email{baog@zju.edu.cn}	
\author[X. Chen]{Xi Chen}
\address{Shanghai Center for Mathematical Sciences, Fudan University, Shanghai 200438, China; School of Mathematical Sciences, Fudan University, Shanghai 200433, China; 
	Center for Applied Mathematics, Fudan University, Shanghai 200433, China. }
\email{xi\_chen@fudan.edu.cn}
\author[S. Lu]{Shuai Lu}
\address{School of Mathematical Sciences, Fudan University, Shanghai 200433, China. }
\email{slu@fudan.edu.cn}
\author[K. Xiong]{Kuangmiao Xiong}
\address{Shanghai Center for Mathematical Sciences, Fudan University, Shanghai 200438, China. }
\email{24110840017@m.fudan.edu.cn}
	
	\begin{abstract}
In addition to reflection and refraction, another form of wave deviation is defined as diffraction. Notably, when incident waves strike a corner, diffracted waves emanate from the corner tip and propagate omnidirectionally. This paper proposes a novel framework for detecting rigid cornered obstacles using measured diffracted wave data. The framework first transforms the underlying initial-boundary value problems into initial value problems on conic manifolds via the method of images. Subsequently, the retrieval of obstacle information is achieved through Cheeger--Taylor functional calculus and microlocal analysis on conic manifolds. Specifically, we prove that for a given pulse, measurements of the resulting diffracted waves captured by a curve receiver uniquely determine both the location and shape of the visible portion of a polygonal obstacle. The proof is constructive, explicitly formulating the corresponding recovery scheme. This methodology offers two key advantages: first, the size and placement of the receiver can be arbitrary; second, the inversion only requires measurements of diffracted waves and obviates the need to solve wave equations within the cornered domain as in conventional methods.
	\end{abstract} 
	
	\maketitle
	
	
 	\section{Introduction}\label{sec1}

	 \subsection{Overview}
	 
The rigorous mathematical formulation of diffraction problems dates back to Sommerfeld \cite{So96}, who, in 1896, analyzed edge diffraction in the plane. In 1958, Friedlander \cite{Fr58} derived explicit formulae for the diffraction of plane and cylindrical waves by wedges in $\mathbb{R}^3$—a model equivalent to corner diffraction in $\mathbb{R}^2$. Later, Keller \cite{Ke62} developed the Geometrical Theory of Diffraction (GTD), which applies to a wider class of diffraction models.
	 \begin{figure}
	 	\centering
	 	\includegraphics[width=\textwidth]{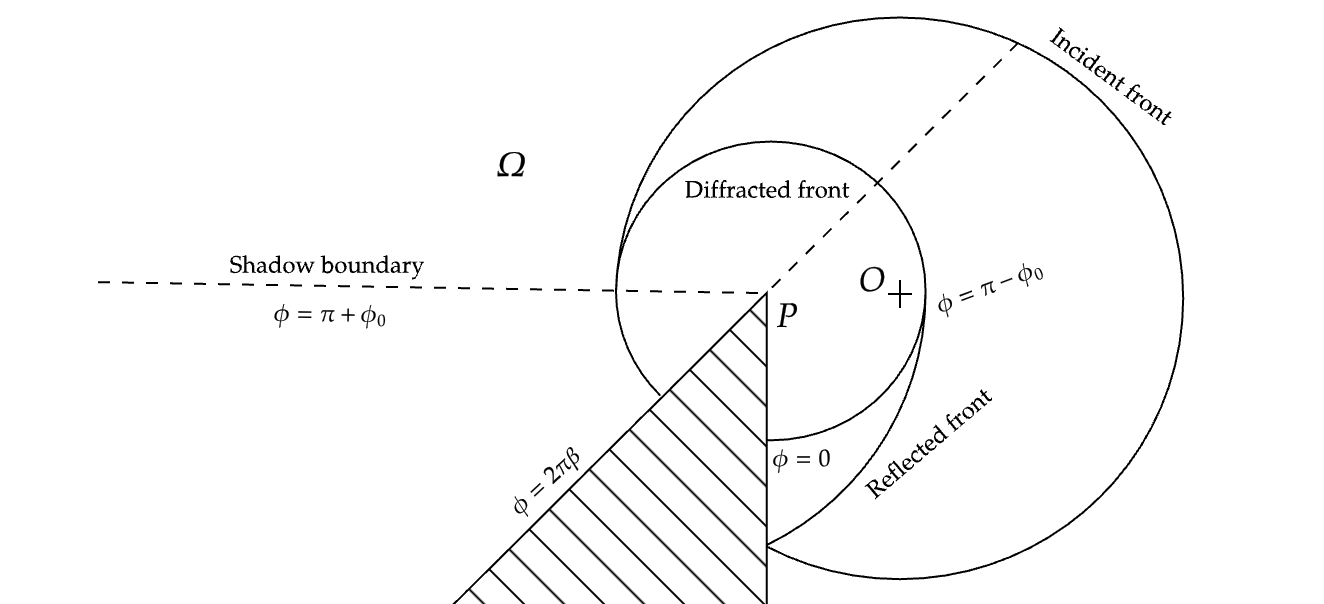}
	 	\caption{Diffraction of a spherical pulse by a corner}
	 	\label{figure0}
	 \end{figure}

	 Figure \ref{figure0}, taken from Friedlander \cite[Figure 5.2.(a)]{Fr58}, depicts the diffraction of a spherical pulse by a corner in 
	$\mathbb{R}^2$. In this model, a spherical wave originates from a point. The incident waves are those that do not impinge on the obstacle. When incident waves meet the obstacle beyond the tip, they get reflected by the boundary. When incident waves strike the very tip of the corner, diffraction takes place. The waves then split into diffracted waves and reflected waves. The diffracted waves are regarded as spherical waves emanating from the tip. This phenomenon lies outside the realm of classical geometric optics.

This example demonstrates that diffracted waves propagate omnidirectionally from the tip, regardless of the incident angles—thereby yielding more scattering data than reflected waves. By exploiting this phenomenon, we can reconstruct the shape and location of a corner using far fewer measurements than conventional active measurement techniques, which rely on reflected waves (e.g., the Dirichlet-to-Neumann map, the source-to-solution map, and far-field scattering data).

In this paper, we establish  that the location and visible shape of a fixed rigid polygonal obstacle can be detected using a single pulse and measurements of the resulting diffracted waves. Specifically, this only requires measuring the diffracted wave at a single time instant, with the receiver being arbitrary in both location and size.

Suppose $K \subset \mathbb{R}^2$ is a fixed and rigid polygonal obstacle   and $O \in \mathbb{R}^2$ is a point away from $K$. The rigidity of the underlying obstacle means that the wave velocity component normal to the obstacle vanishes on its boundary. Consequently, the boundary condition is of Neumann type. The model of concern is the initial boundary value problem in the exterior domain  $\Omega :=\mathbb{R}^2\setminus K$,
\begin{equation}\label{eq_1}
	\left\{ \begin{aligned}
		(\partial_{tt}-\Delta_{\Omega})u_{\Omega}(t,x,O)  &= 0&&  (t,x) \in \mathbb{R}\times \Omega,\\
		u_{\Omega}(0,x,O)  &=  0 &&  x\in \Omega, \\
		\partial_t u_{\Omega}(0,x,O)  &=  \delta_O  &&  x\in \Omega, \\
		\frac{\partial}{\partial v}u_{\Omega}(t,x,O)  &=  0 &&  (t,x)\in \mathbb{R}\times \partial \Omega, \\
	\end{aligned} \right.
\end{equation}
where $\Delta_{\Omega}$ is the Laplacian with Neumann boundary conditions and $v$ is the unit outward vector normal to $\partial \Omega$.

\subsection{Corner identification}
It is prudent to begin with the simpler model where the obstacle $K$ corresponds to a corner, including the concave corner case, as illustrated in Figure \ref{figure0}.

\begin{definition}
	A domain $K \subset \mathbb{R}^2$ is called a corner with tip $P$, if $\partial K$ consists of two straight rays meeting at $P$, with distinct one-sided tangent directions $v_1$ and $v_2$. Let the angle between $v_1$ and $v_2$ be denoted by $2\pi(1-\beta)$, as illustrated in Figure \ref{figure1}. Then:
	\begin{itemize}
		\item $K$ is called convex if $\beta \in (1/2, 1)$, meaning the angle $2\pi(1-\beta) \in (0, \pi)$.
		\item $K$ is called concave if $\beta \in (0, 1/2)$, meaning the angle $2\pi(1-\beta) \in (\pi, 2\pi)$.
		\item $K$ is called non-diffractive if $\beta \in \left\{N^{-1} : N \in \mathbb{Z}^+\right\}$. Otherwise, $K$ is called diffractive.
	\end{itemize}
\end{definition}

\begin{remark}Diffractive corners refer to those geometric singularities that induce wave diffraction, whereas non-diffractive corners do not give rise to such a phenomenon. As a matter of fact, the derivation presented in Remark \ref{concave corner which diffraction vanishes} indicates that the diffracted waves emanating from a corner will vanish when the corner is non-diffractive.

\end{remark}

The following theorem demonstrates that all diffractive corners are detectable, and their geometric shape and spatial location can be uniquely reconstructed.

\begin{figure}
	\centering
	\includegraphics[width=\textwidth]{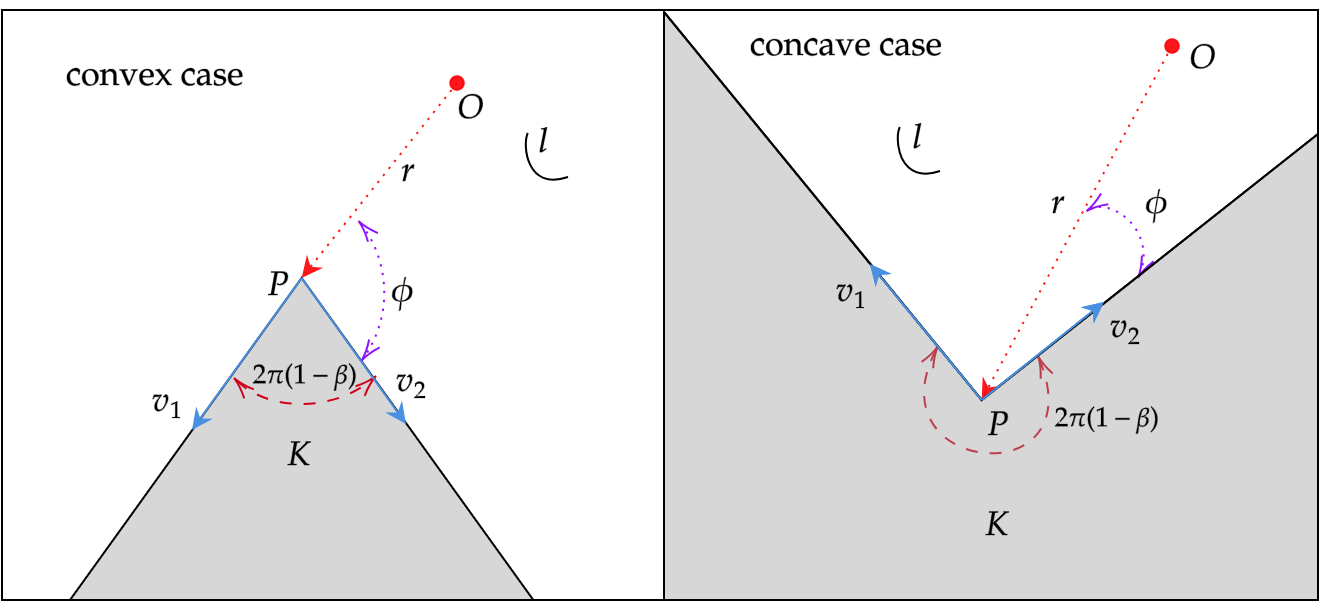}
	\caption{Determining the location and shape of a corner in $\mathbb{R}^2$}
	\label{figure1}
\end{figure}

\begin{theorem}\label{1.1} 
	Suppose $K \subset \mathbb{R}^2$ in \eqref{eq_1} is a diffractive corner with tip $P$. Let $O$ be a  point and $l$ a curved line segment in the exterior domain $\Omega$ of $K$, where $O$ and $l$ do not lie on the two branches of a hyperbola. Denote  by $r$ the distance from $O$ to $P$, and by $\phi$ the angle between $O$ and one edge of $K$. (See Figure~\ref{figure1}) Then the mapping
	\begin{equation}\label{eqn : observation at l}
		(P,r,\phi,\beta)\longrightarrow \left\{ u_{\mathcal{D},P}^{\mathrm{prin}}(t(x),x,O)  : x\in l \right\}
	\end{equation}
	is injective, where $u_{\mathcal{D},P}^{\mathrm{prin}}(t, x, O)$, given in \eqref{asy_0},  is the principal part of the wave $u_{\Omega}$ in \eqref{eq_1}, at $l$ with source $\delta_O$, and $t(x)$ is the arrival time of $u_{\Omega}$ at $x \in l$, i.e.  $$t(x):=r+r(x),$$
	where $r(x):=|x-P|$.
\end{theorem}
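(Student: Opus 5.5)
We do not yet have the formula \eqref{asy_0}. It will be the leading singularity of the diffracted wave from Cheeger--Taylor calculus on the cone of angle $2\pi\beta$ (obtained by the method of images / doubling). So we will use it in the form we expect it to take. Write $\theta(x)$ for the angle of $x$ measured from the same edge of $K$ as $\phi$. Then the principal diffracted wave should factor as
\[
u_{\mathcal{D},P}^{\mathrm{prin}}(t(x),x,O)=A(r,r(x))\,D_\beta(\phi,\theta(x)),
\]
for $t$ near $r+r(x)$. Here $A$ is an explicit amplitude depending only on the radial distances, such as $c\,(r\,r(x))^{-1/2}$ times a fixed conormal singularity in $t-r-r(x)$. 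The angular factor is of Sommerfeld/Friedlander type,
\[
D_\beta(\phi,\theta)=\frac{1}{\beta}\sum_{\pm}\frac{\sin(\pi/\beta)}{\cos(\pi/\beta)-\cos\big((\theta\pm\phi)/\beta\big)}
\]
up to normalization; the $+$ and $-$ terms both appear because the boundary condition is Neumann. For a diffractive corner, $\sin(\pi/\beta)\neq0$ and $D_\beta\not\equiv0$. For a non-diffractive corner it vanishes, consistent with the earlier remark.

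\textbf{Step 1: recover $P$ and $r$.} The observation is only meaningful as a function of $x$ once we also know the arrival times $t(x)=r+r(x)=|O-P|+|x-P|$ on $l$. Hence the data determine the function $x\mapsto |x-P|+|O-P|$ on $l$. Suppose two candidates $P,P'$ give the same data. Then
\[
|x-P|-|x-P'|=|O-P'|-|O-P| \quad \text{for all } x\in l .
\]
Both $x\in l$ and $x=O$ then satisfy $|z-P|-|z-P'|=\pm c$, with opposite signs. So $O$ and $l$ lie on the two branches of the hyperbola (possibly degenerate) with foci $P,P'$, and this is excluded by hypothesis. Degenerate cases need a short check: if $c=0$ the set is the perpendicular bisector, a degenerate hyperbola, so treating it as such is part of how we read the hypothesis. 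Therefore $P=P'$, and then $r=|O-P|$ is determined.

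\textbf{Step 2: recover $\phi$ and $\beta$.} With $P$ and $r$ known, $A(r,r(x))$ is known. Dividing by it gives $D_\beta(\phi,\theta(x))$ on $l$. The variable $\theta(x)=\arg(x-P)-\alpha_0$ depends on the unknown edge direction $\alpha_0$. Since $\phi=\arg(O-P)-\alpha_0$, we can write $\theta(x)=\phi+\psi(x)$, where $\psi(x)=\arg(x-P)-\arg(O-P)$ is now known. The data therefore become the function
\[
\psi\mapsto F(\psi)=D_\beta(\phi,\phi+\psi)
\]
on an interval of $\psi$. We may assume this interval is nontrivial: if $l$ were a radial segment from $P$, one would refine the argument using the $r(x)$ dependence, or note it is a degenerate hyperbola case. $F$ is real-analytic in $\psi$ and extends meromorphically. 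So the data determine $F$ everywhere, including its poles, which sit at $(2\phi+\psi)/\beta\equiv\pm\pi \pmod{2\pi}$ and $\psi/\beta\equiv\pm\pi \pmod{2\pi}$. The real poles form two arithmetic progressions with common spacing $2\pi\beta$, so $\beta$ is determined. The offset between the two progressions gives $2\phi$ modulo $2\pi\beta$. Combined with the constraint $\phi\in(0,2\pi\beta)$ and the known residues, which distinguish the $\pm$ families, this determines $\phi$. That yields injectivity of $(P,r,\phi,\beta)$.

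\textbf{Main obstacle.} The hardest step is Step 2: rigorously extracting $(\beta,\phi)$ from the explicit angular kernel. This needs care in three places:
\begin{itemize}
\item the exact normalization in \eqref{asy_0};
\item the analytic continuation from a possibly tiny arc;
\item the symmetry $\phi\leftrightarrow 2\pi\beta-\phi$ (reflecting about the bisector), which may need the residue signs or the choice of reference edge to break.
\end{itemize}
If the pole-spacing argument is awkward, I would instead compare Taylor coefficients at one point, or use the Fourier-series form $\sum_k \cos(k\phi/\beta)\cos(k\theta/\beta)$ of the kernel.
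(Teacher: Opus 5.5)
Your Step 1 is the paper's argument: equal travel times on $l$ put $O$ and $l$ on the two branches of a hyperbola with foci $P,P'$. Continuing the angular data meromorphically from a small arc is also what the paper does.

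\textbf{The gap.} It is in Step 2, at ``the real poles form two arithmetic progressions with common spacing, so $\beta$ is determined.''

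First, your kernel has the wrong angular frequency. $\Omega$ has opening $2\pi\beta$ and is doubled to $C(I_{4\pi\beta})$, so the parameter is $\lambda=(2\beta)^{-1}$. In your oriented variable $\psi$ (the paper's $\alpha$), the data are, up to a known factor,
\[
S_{\phi,\lambda}(\psi)=2\lambda\sin(\pi\lambda)\Big[\frac{1}{\cos(\lambda\psi)-\cos(\pi\lambda)}+\frac{1}{\cos(\lambda(\psi+2\phi))-\cos(\pi\lambda)}\Big].
\]
Its poles form four cosets of $4\pi\beta\mathbb{Z}$: those at $-\pi$ and $-\pi-2\phi$ have residue $+2$, and those at $\pi$ and $\pi-2\phi$ have residue $-2$. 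Your stated locations $\psi/\beta\equiv\pm\pi$ do not match even your own formula.

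Second, and more seriously, the actual spacing of this configuration need not be $4\pi\beta$.
\begin{itemize}
\item If the source lies on the bisector, $\phi=\pi\beta$, then
\[
S_{\phi,\lambda}(\psi)=\frac{4\lambda\sin(2\pi\lambda)}{\cos(2\lambda\psi)-\cos(2\pi\lambda)}.
\]
Its poles have spacing $2\pi\beta$, and it has exactly the form of a single family with parameter $2\lambda$. Reading $\beta$ off the spacing then returns $\beta/2$.
\item If $\phi\equiv\pm\pi \pmod{2\pi\beta}$ (e.g.\ $\phi=\pi$ for a convex corner), a residue $+2$ progression lands on a residue $-2$ progression, and both disappear.
\end{itemize}
So you must prove that no other admissible $(\tilde\beta,\tilde\phi)$ reproduces the same data in these aliasing situations. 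That is exactly the content of Lemma~\ref{T4.1_lem} (minimal period $2\pi/\lambda$ unless $\phi=\pi/(2\lambda)$) and of Cases 1--3 in the proof of Theorem~\ref{T4.1}. This is also where the diffractive hypothesis $\lambda\notin\tfrac12\mathbb{Z}$ does real work; your proposal uses it only to say the kernel is not identically zero.

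\textbf{How your pole idea can be completed.} Using residue signs, it works, arguably more transparently than the paper's pigeonhole case analysis. Let $A_\pm$ be the sets of poles with residue $\pm2$.
\begin{itemize}
\item Because $0<\phi<2\pi\beta$ and $\lambda\notin\tfrac12\mathbb{Z}$, at most one collision occurs. Hence $A_+$ is a union of $k\in\{1,2\}$ cosets of $4\pi\beta\mathbb{Z}$.
\item If $g\mathbb{Z}$ is the period group of $A_+$, then $4\pi\beta/g\in\{1,2\}$, and likewise for $\tilde\beta$.
\item So $\beta\neq\tilde\beta$ forces one configuration to be the bisector case and the other a collision case.
\item Comparing the surviving cosets of $A_\pm$ then gives $2\pi\in 4\pi\tilde\beta\,\mathbb{Z}$, i.e.\ $\tilde\lambda\in\mathbb{Z}$, a contradiction.
\end{itemize}
With $\beta$ fixed, subtracting the $\phi$-independent family (as in the paper's Case 1) gives $2\phi$ modulo $4\pi\beta$. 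Since $0<\phi<2\pi\beta$, this determines $\phi$.

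The reflection symmetry you worry about is not an obstruction: it sends $\psi\mapsto-\psi$, so it changes the data.

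Your fallback for a radial receiver does not work. The principal term factors into a radial part times an angular part, so a radial $l$ yields only one angular value. You need, as the paper assumes, that the angular projection of $l$ contains an interval.
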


\begin{remark}
	If $O$ and $l$ lie on the two branches of a hyperbola, there exist $P_1$ and $P_2$ such that $$|O-P_1|+|P_1-x|=|O-P_2|+|P_2-x|,\quad x\in l.$$ Thus, we cannot distinguish between $P_1$ and $P_2$ based merely on the arrival times of $u_{\Omega}$.
\end{remark}

\begin{remark}
	Indeed, for each fixed $x$, we will show in Remark~\ref{another discription of T} that the arrival time of waves is
	\begin{equation}\label{eq: another discription of T}
		t(x) = \max \left\{ t>0 : x \in \mathrm{sing\,supp}(u_\Omega(t)) \right\}.
	\end{equation}
	This, in turn, leads to the definition of the $i$-th arrival time $t_i(x)$ of the waves in Theorem~\ref{1.2}.
\end{remark}

Unlike conventional inverse scattering methods, the proof of Theorem \ref{1.1} does not analyse the measurements of the exterior domain problem \eqref{eq_1}. 	  Instead,  the method of images is employed to convert the initial boundary value problem \eqref{eq_1} in $\Omega$ into a Cauchy problem of the wave equation on a cone. Given the radial symmetry of the domain $\Omega$, image sources can be introduced to replicate the effect of the boundary conditions, thereby reducing the initial boundary value problem to an initial value problem. In Theorem \ref{1.1}, we adopt polar coordinates in $\mathbb{R}^2$ with origin $P$ such that $\Omega=\mathbb{R}_+\times [0,2\pi\beta]$. Note that $\Omega$ is contained in the flat cone $C(I_{4\pi\beta})$, 
$$C(I_{4\pi\beta}):=\mathbb{R}_+\times I_{4\pi\beta}, \quad \mbox{with $I_{4\pi\beta}:=\mathbb{R}/4\pi\beta\mathbb{Z}$}.$$ The latter
 is furnished with the induced metric
\begin{equation}
	g_{C(I_{4\pi\beta})}=dr\otimes dr+r^2d\theta\otimes d\theta.
\end{equation}
If $\tilde{u}$ solves
\begin{equation}\label{conical}
	\left\{ \begin{aligned}
		(\partial_{tt}-\Delta_{C(I_{4\pi\beta})})\tilde{u}(t,x,O)  &= 0&&  (t,x) \in \mathbb{R}\times C(I_{4\pi\beta}),\\
		\tilde{u}(0,x,O)  &=  0 &&  x\in  C(I_{4\pi\beta}) ,\\
		\partial_t \tilde{u}(0,x,O)  &=    \delta_O+ \delta_{-O}  &&  x\in  C(I_{4\pi\beta}), \\
	\end{aligned} \right.
\end{equation}
where $O=(r,\phi)$ and $-O=(r,-\phi)$ such that $\delta_O+ \delta_{-O}$ is an even extension of $\delta_O$ to $C(I_{4\pi\beta})$, then the restriction of $\tilde{u}$ to $\Omega$ is the solution to \eqref{eq_1}. The readers are referred to \cite[Ch 3.7]{T_PDE_1} for more details on the method of images.

 Cheeger and Taylor \cite{Cheeger-PNAS-79, CT82a, CT82b} developed the functional calculus on cones, which enables us to compute the wave kernel and elucidate the diffraction of waves. In particular, the principal terms of diffracted waves on $\Omega$ explicitly read
 \begin{multline}\label{asy_0}
  u_{\mathcal{D},P}^{\mathrm{prin}}(t, x, O)= \\ -\frac{1}{2}(r(x)r)^{-1/2}H(t-r-r(x))\left[\sin\left(\pi\sqrt{\Delta_I}\right)\delta_{\phi}+\sin\left(\pi\sqrt{\Delta_I}\right)\delta_{-\phi}\right], 
 \end{multline}
 where $H(\cdot)$ is the Heaviside step function, $r(x)=|x-P|$, 
 and $\Delta_I$ is the Laplacian over $I_{4\pi\beta}$.  See Section \ref{sec3} for details.
It is the measurement \eqref{eqn : observation at l} of \eqref{asy_0} at the arrival times of the receiver that uniquely determines the location of the tip and the angle of the corner.

 \subsection{Polygon identification}

The diffraction framework in Theorem \ref{1.1} enables us  to detect the location and shape of a  compact polygon $K$ in $\mathbb{R}^2$ with vertices $\{P_i : i = 1, \dots, M\}$. For brevity of statement, we only address convex polygons. 
 
Since only the corners of visible tips are identifiable, we need to exclude the invisible tips.

\begin{definition} A vertex $P_i$ of $K$ is called visible, with respect to $O$ and $l$, if the line segments connecting $P_i$ to $O$ and to every point on $l$ lie entirely within $\Omega$. (See Figure~\ref{figure2.1})
\end{definition} 

\begin{figure}
	\centering
	\includegraphics[width=\textwidth]{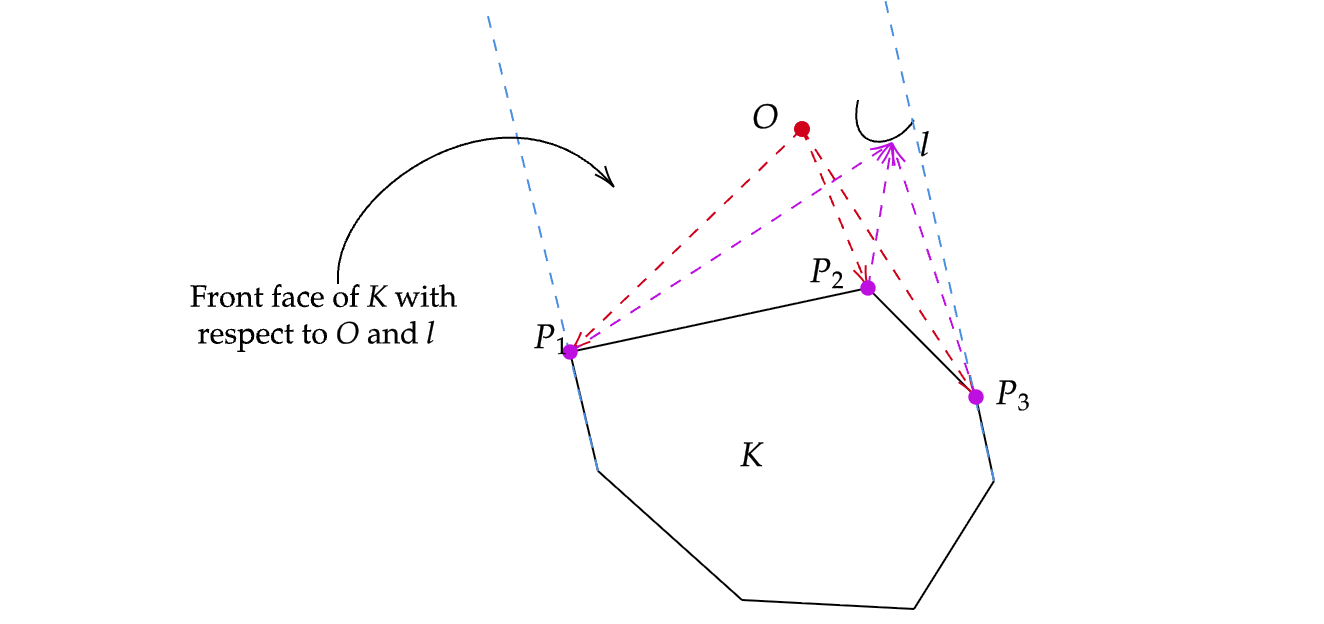}
	\caption{Visible vertices of $K$ with respect to $O$ and $l$}
	\label{figure2.1}
\end{figure}

Every visible vertex $P_i$ must be struck by any spherical waves originating from $O$. Consequently, waves diffract at each $P_i$. Then  the diffracted waves $u_{\mathcal{D},P_i}$ are produced and then observed at time $t_i(x)$ at point $x \in l$.  

However,   reconstructing $K$ is by no means a simple iteration of Theorem \ref{1.1} from $P_1$ to $P_m$.  The reasons are threefold. First of all, a diffracted wave $u_{\mathcal{D},P_i}$, emanating from any visible vertex $P_i$, propagates along all directions, thereby causing a secondary diffraction at the adjacent vertex $P_j$. While this wave is observed at the receiver, it also generates a new diffracted wave $u_{\mathcal{D},P_i,P_j}$ at $P_j$, which may corrupt the measurements. 
Second, reflected waves from other edges also contaminate the observations at $l$.
Furthermore, the arrival order of  $u_{\mathcal{D},P_i}$ from each $P_i$ vary within $l$ as the travel time of the diffracted wave $|O-P_i|+|x - P_i|$ depends on both $i = 1, \cdots, N$ and $x \in l$. Compared with the model case in Theorem \ref{1.1}, this creates a challenge in separating the measurements of the diffracted waves from distinct vertices.

To resolve these difficulties above, we use different techniques. For the first two issues, one can filter out both reflected waves and secondary diffracted waves by capturing only those waves that fall within a specified magnitude range. This is because incident and reflected waves are stronger than all diffracted waves, with primary diffracted waves being stronger than secondary ones. To separate the diffracted waves from distinct vertices, localization along  $l$
is required to capture the earliest arriving wave within a small neighbourhood on $l$.

Once this issue addressed, one can detect both the location and corner angle of any visible tip of $K$.

We begin by providing a rigorous definition of the observational data used.

\begin{definition}
	Let $\Omega\subset \mathbb{R}^n$ be an open set and $x_0\in\Omega$.  Define an equivalence relation on $\mathcal{D}'(\Omega)$: 
	$$u \sim_{x_0} v \quad\mbox{if and only if}\quad x_0\notin \mathrm{supp}(u-v).$$ The equivalence class of $u$ under this relation is called the localization of $u$ at $x_0$, and denoted by $[u]_{x_0}$.
\end{definition}

	We use this type of data to determine the location and shape of polygonal obstacles.
	\begin{theorem}\label{1.2}
		Let $K$ in \eqref{eq_1} be a compact convex polygon in $\mathbb{R}^2$, $\mathrm{Vert}(K)$ the set of vertices of $K$, and $O\in \Omega:=\mathbb{R}^2\setminus K$. Suppose that there is no grazing ray emanating from the point $O$, i.e,
		\begin{equation}\label{no-grazing condition}
			\mbox{the line through $O$ is not tangent to $\partial K$.}
		\end{equation}
		Furthermore, suppose $l\subset\Omega$ is a curved line segment that is not formed by the piecewise composition of straight line segments and hyperbolic arcs alone. Let $\{P_{j}\}_{j=1,...,N}$ denote the collection of visible vertices of $K$ with respect to $O$ and $l$. Denote $(P_{j},r_{j},\phi_{j},\beta_{j})$ as in Theorem \ref{1.1}. The mapping
		\begin{multline}\label{eqn : poly observation at l}
		D: \left\{\left(P_{j},r_{j},\phi_{j},\beta_{j}\right) : {j=1,...,N}\right\} \\ \longrightarrow  \left\{\left[u_{\Omega}(t_i(x_0),x,O)\right]_{x_0} : x_0\in l,\, i\in \mathbb{Z}_+\right\}
		\end{multline}
		is injective, where  $t_i(x)$ is the $i$-th arrival time of the waves in \eqref{eq_1} at $x$, defined by
		\[
		t_{i+1}(x) := \min\left\{ t > t_i(x) : x \in \mathrm{sing\,supp}(u_\Omega(t)) \right\},
		\]
		with the first arrival time
		\[
		t_1(x) := \min\left\{t > |x - O| : x \in \mathrm{sing\,supp}(u_\Omega(t)) \right\}.
		\]
		
\end{theorem}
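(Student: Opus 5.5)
We reduce Theorem \ref{1.2} to a localized version of Theorem \ref{1.1} applied vertex by vertex. The data $\{[u_\Omega(t_i(x_0),x,O)]_{x_0}\}$ give, for each $x_0\in l$ and each $i$, the germ of the solution near $x_0$ at the $i$-th singular time. The plan is in three steps. First, classify the singularities reaching $l$. Second, isolate the primary diffracted wave from each visible vertex. Third, run the corner reconstruction of Theorem \ref{1.1} on each isolated piece.

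\textbf{Classifying the wavefronts.} Near each vertex $P_j$ we use finite propagation speed and the method of images on the cone $C(I_{4\pi\beta_j})$. This shows that, for times shortly after the wave first hits $P_j$, the solution near $P_j$ agrees modulo smoother terms with the model corner solution \eqref{conical}. Combining this with the Cheeger--Taylor wave kernel and propagation of singularities on conic manifolds, the wavefront set of $u_\Omega$ at any $x_0\in l$ splits into four types. These are the incident wave, arriving at time $|x_0-O|$; reflected waves off edges, which are geometric rays; primary diffracted waves $u_{\mathcal{D},P_j}$, arriving at times $|O-P_j|+|x_0-P_j|$; and multiply diffracted waves.

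The no-grazing condition \eqref{no-grazing condition} ensures that no ray from $O$ hits $\partial K$ tangentially. Hence incident and reflected waves keep the same Sobolev order as $\delta$-pulses: jumps of order $t^{-1/2}$ type, i.e. conormal singularities of the free wave. By \eqref{asy_0}, primary diffracted waves are Heaviside jumps with amplitude $(r(x)r)^{-1/2}$, which is one half-derivative smoother. Secondary diffracted waves are smoother still, by a further half-derivative, since diffraction acts on a Heaviside-type input. From the germ $[u_\Omega(t_i(x_0))]_{x_0}$ we can read off the singularity order. This lets us decide, for each $i$, whether the $i$-th arrival at $x_0$ is a primary diffracted front, and extract its leading coefficient. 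That coefficient is exactly the $P_j$-term of \eqref{asy_0} evaluated at $x_0$.

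\textbf{Separating vertices and recovering them.} The travel times $\tau_j(x)=|O-P_j|+|x-P_j|$ for distinct visible vertices are distinct analytic functions of $x$. If two coincided on a subarc of $l$, that subarc would be a hyperbolic arc (or a straight segment in degenerate cases), which $l$ excludes. Thus on a dense open subset of $l$ the primary diffracted arrival times are pairwise distinct. Near any such $x_0$ we follow a single branch $x\mapsto\tau_j(x)$, together with its principal amplitude, along a subarc $l'\subset l$. On $l'$ we now have exactly the data \eqref{eqn : observation at l} for the single corner at $P_j$. Locally near $P_j$ the obstacle is the corner with parameters $(P_j,r_j,\phi_j,\beta_j)$, and the principal term depends only on this local geometry. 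Since $l'$ is not contained in a hyperbola branch, Theorem \ref{1.1} applies and recovers $(P_j,r_j,\phi_j,\beta_j)$.

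Different branches are identified by analytic continuation along $l$. The set $\{\tau_j\}$ is an unordered set of functions determined by the data, so the unordered collection of quadruples is determined. This gives injectivity of $D$. Branches whose amplitude vanishes identically correspond to non-diffractive $\beta_j$. They carry no data, and we must handle them either by assumption or by showing that convex polygon vertices have $\beta_j\in(1/2,1)$, which is never of the form $1/N$. This holds, so every visible vertex diffracts.

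\textbf{Main obstacle.} The hardest step is the rigorous separation by singularity strength. We must show that, near $x_0$ at time $\tau_j(x_0)$, reflected and multiply diffracted contributions are either absent or strictly weaker or stronger in a quantifiable Sobolev or conormal sense, and that they cannot cancel the principal diffracted term. I would first try a geometric argument: choose $x_0$ off the finitely many curves where a reflected or secondary arrival time coincides with some $\tau_j$. This is possible by the genericity of $l$, and it makes the germ at that time purely a primary diffraction. Only if that fails would I compare conormal orders.
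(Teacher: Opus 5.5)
Your plan is essentially the paper's proof. You restrict to a subarc of $l$ that avoids the finitely many hyperbolae $\gamma_{i,j}$, the rays $\mathcal{F}_j$ and the extended edges $\partial\tilde{\Omega}_{P_k}$ (Lemma \ref{sep polygon}); you separate geometric, primary and secondary waves by their Sobolev orders $H^{-0}\setminus L^2$, $H^{1/2-0}$ and $H^{1-0}$ (Proposition \ref{T_1}); you identify the primary germ with the single-corner term \eqref{asy_0} by comparing with the model corner at $P_j$ (Proposition \ref{T_2}, where the paper also microlocalizes the source and transports the principal symbol by an FIO parametrix); and you conclude with Theorems \ref{1.1} and \ref{T4.1}.

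Two small corrections. First, the hypothesis on $l$ only guarantees a nonempty good open subarc, not a dense set, since $l$ may contain hyperbolic pieces. Second, on such a subarc the fronts of all $N$ visible vertices are already present and pairwise separated, so the analytic-continuation step is unnecessary; it would in any case require $l$ to be analytic.
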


	\begin{remark}
		As stated in \cite{CT82b,MW04}, if a grazing ray occurs, the waves along this ray have the same Sobolev regularity with incident waves. Consequently, the secondary diffracted waves  can not be filtered out by the quotient mapping \eqref{quotient mapping}.
	\end{remark}	

 \begin{remark}
 	We shall see that, on the one hand, the locus of points at which the primary diffracted waves generated by $P_i$ and $P_j$ arrive exactly at the same time is a (branch of a) hyperbola, possibly degenerating to a straight line. Moreover, the points at which the diffracted front and the geometric front arrive at the same time lie on only finitely many straight lines passing through the tips. This is precisely the reason for imposing the restriction on the receiver $l$ stated in the theorem.
 \end{remark}
 
\subsection{Discussion}Time-domain inverse obstacle scattering problems have garnered many applications from diverse fields, including non-destructive testing, mineral prospecting, the characterization of acoustic properties in buildings, ultrasound imaging, radar systems, and sonar technology. Classical inverse scattering theory, such as the framework detailed in \cite{Colton-Kress}, usually reconstructs the geometric profile of the target obstacle via reflection tomography. As far as we are aware, no existing inversion results have been derived by relying exclusively on the data of diffracted waves.

For detecting polygonal obstacles, the proposed diffracted wave approach features two key advantages. First, as shown in \eqref{eqn : poly observation at l}, it requires only a single spherical wave from an arbitrary point source, an arbitrary line segment receiver, and observation of the first arrival time for each tip. Second, inversion avoids solving the boundary value problems of associated wave equations, relying solely on the oscillatory factors of diffracted waves derived from functional calculus on conic manifolds.
 
 The diffraction phenomenon on manifolds with singularities is a long-standing problem in analysis and geometry. The core question is to understand how geometric singularities affect wave propagation—a question not addressed by the classical theorem of singularity propagation in smooth media due to H\"ormander and Duistermaat \cite{Ho_FIO_1, Ho_FIO_2}.
 
 For product-type conic manifolds (those with radial symmetry), \cite{CT82a, CT82b} not only computed the explicit wave kernel using the methods of separation of variables and functional calculus but also established a theorem for the propagation of diffraction singularities. However, this methodology fails for non-product-type conic manifolds (those without radial symmetry). Wave propagation in diffraction scenarios has been investigated through various analytic and smooth microlocal analysis frameworks in more general settings, including the analytic category \cite{Leb97}, non-product-type conic manifolds \cite{MW04, GW-JDG-22}, manifolds with edges \cite{MVW08}, manifolds with corners \cite{V08}, manifolds with conormal singularities \cite{dHUV-2015},  and Lorentzian manifolds with conic singularities \cite{hintz2024localtheorywaveequations}.  Beyond wave diffraction in the time domain, \cite{BPS-CMP} established the absence of non-scattering wavenumbers for penetrable corner obstacles in the frequency domain.
 
The aforementioned research on forward diffraction problems lays a solid foundation for the rigorous mathematical understanding of diffraction phenomena. This paper, by contrast, focuses on the inverse problem of reconstructing the geometric parameters of obstacles from local measurements of diffracted waves. To the best of our knowledge, this work represents the first attempt to apply the geometric and microlocal analysis of diffracted waves to the study of inverse obstacle problems.  Through a refined analysis of diffracted waves, we establish that the location and visible shape of a fixed rigid polygonal obstacle can be uniquely determined using a single pulse and measurements of the resulting diffracted waves. Remarkably, the geometric and microlocal information of diffracted waves significantly reduces observational costs, as it requires measurements at only a single time instant, with the receiver being arbitrary in both its location and size.

	\subsection*{Structure of the paper}
Section \ref{sec2} reviews relevant geometry and analysis on conic manifolds. Section \ref{sec3} applies functional calculus to derive an explicit formula for waves near a corner in \(\mathbb{R}^2\), and proves unique parameter retrieval from the oscillatory component of diffracted waves (Theorem \ref{T4.1}). Theorems \ref{1.1} and \ref{1.2} are proven in Sections \ref{P1.1} and \ref{P1.2} respectively.

	\section{The geometry and analysis of conic manifolds}\label{sec2}
	\subsection{The geodesics on conic manifolds}\label{sec2.1}
	Let $(\Theta, g_{\Theta})$ be a complete  Riemannian manifold. Denote by $\gamma_{(\theta_0,\eta_0)}(t)$   the unique geodesic starting at $\theta_0 \in \Theta$ with initial velocity $\eta_0 \in T_{\theta_0}^\ast \Theta$, and for brevity write
	\begin{equation*}
		\left\{ \begin{aligned}
					\gamma_{(\theta_0,\eta_0)}(t)  &= \theta(t), && \gamma_{(\theta_0,\eta_0)}(0)   = \theta_0; \\
			\dot{\gamma}_{(\theta_0,\eta_0)}^\flat(t)  &= \eta(t),&&
			\dot{\gamma}_{(\theta_0,\eta_0)}^\flat(0)  = \eta_0. 
		\end{aligned} \right.
	\end{equation*}  
	The geodesic flow   $$\left\{\left(\gamma_{(\theta_0,\eta_0)}(t), \dot{\gamma}^\flat_{(\theta_0,\eta_0)}(t)\right) : t > 0\right\} \subset T^\ast \Theta $$ is the Hamiltonian flow $H_t$ with Hamiltonian $$H(\theta, \eta) := \frac{1}{2} g_\Theta^{ij}(\theta) \eta_i \eta_j, \quad \mbox{with $\left(g_\Theta^{ij}\right)^{-1} = g_\Theta$}.$$  Explicitly, the geodesic flow is given by 
	\begin{equation*}
		\left\{ \begin{aligned}
			\dot{\theta}^i  &=  g_\Theta^{ij}(\theta)\eta_j \\
			\dot{\eta}_k&= -\frac{1}{2} \partial_{\theta^k} g_\Theta^{ij}(\theta)\eta_i\eta_j.\\
		\end{aligned} \right.
	\end{equation*}
	
	Let $C(\Theta):=\mathbb{R}_+\times \Theta$ be a conic manifold, where $(\Theta,g_\Theta)$ is a compact Riemannian manifold without boundary, and $C(\Theta)$ is furnished with metric
	\begin{equation}\label{metric}
		g_{C(\Theta)}(r, \theta):=dr\otimes dr+r^2g_\Theta(\theta).
	\end{equation}
	For convenience, we denote the completed cone of $C(\Theta)$  	\begin{displaymath}
		C^*(\Theta):=\{(r,\theta)\in C(\Theta):r \geq 0\}
	\end{displaymath}as well as the truncated cone  
	\begin{displaymath}
		C_{r_1,r_2}(\Theta):=\left\{(r,\theta)\in C(\Theta): 0 \leq r_1<r<r_2\leq \infty\right\}.
	\end{displaymath}
	Consider the spherical cotangent bundle \[ S^* C(\Theta) := \left\{  (r,\theta,\tau,\eta)\in T^* C(\Theta) :    \tau^2 + r^{-2}g^{ij}_\Theta(\theta)\eta_i\eta_j \equiv 1 \right\}.\]    
	The geodesic flow in $S^\ast C(\Theta)$ is given by
	\begin{equation}\label{geo}
		\left\{ \begin{aligned}
			\dot{r}  &= \tau;  \\	
				\dot{\theta}^i  &=  r^{-2}g^{ij}_\Theta(\theta)\eta_j; \\
			\dot{\tau} &= r^{-3}g^{ij}_\Theta(\theta)\eta_i\eta_j ;\\
			\dot{\eta}_{k}&= -\frac{1}{2}r^{-2} \partial_{\theta^k} g^{ij}_\Theta(\theta)\eta_i\eta_j.\\
		\end{aligned} \right.
	\end{equation}
	Given the initial data \begin{equation*}
		(r(0), \theta(0), \tau(0), \eta(0)) = (r_0, \theta_0, \tau_0, \eta_0) \in S^*C(\Theta)
	\end{equation*} 
	for the geodesic equation \eqref{geo}, we denote by $\tilde{\gamma}_{(r_0, \theta_0, \tau_0, \eta_0)}(t)$  the corresponding geodesic on $C(\Theta)$. The equations of $r$ and $\tau$ can be transformed into
	\begin{equation}\label{geo_r}
		\left\{ \begin{aligned}
			\dot{r}  &= \tau; \\
			\dot{\tau} &= r^{-1}(1-\tau^2). \\
		\end{aligned} \right.
	\end{equation} 
 The solution to \eqref{geo_r} is
	\begin{equation}\label{eq 1}
	\left\{ \begin{aligned}  r(t)&=\sqrt{\left(t + \tau_0 r_0\right)^2 + C_0^2  },\\ \tau(t)&=\frac{t + \tau_0 r_0}{\sqrt{\left(t + \tau_0 r_0\right)^2 + C_0^2 }},   \end{aligned} \right.
	\end{equation}
with $C_0 :=r_0\sqrt{1-\tau^2_0}$.

The constant $C_0$ in \eqref{eq 1} indicates whether the underlying geodesic $\tilde{\gamma}_{(r_0, \theta_0, \tau_0, \eta_0)}(t) $ passes through the cone tip $P$ of $C(\Theta)$.

When $C_0 = 0$ (i.e. $\tau_0 = \pm 1$), it follows from \eqref{eq 1} and \eqref{geo_r} that
	\begin{equation*}
		\tau(t) = 
		\begin{cases}
			1 & \text{if } t > -r_0 \tau_0 \\
			-1 & \text{if } t < -r_0 \tau_0
		\end{cases}S.
	\end{equation*}
Since geodesics travel at constant speed, $\tilde{\gamma}_{(r_0, \theta_0, \tau_0, \eta_0)}(t)$ leaves $S^\ast C(\Theta)$ invariant, i.e. $$ r^{-2} g_\Theta^{ij}(\theta) \eta_i\eta_j =  {1 - \tau^2} \equiv 0$$ over the geodesic flow. Since $g_\Theta$ is positive definite, $\eta \equiv 0$. This means that the geodesic encoded in \eqref{geo} is simply $\tilde{\gamma}_{(r_0, \theta_0, \tau_0, \eta_0)}(t) = (\pm t + r_0, \theta_0)$, passing through $r=0$ at time $t=\mp r_0$.
	
Now	we assume that $C_0\neq0$. By \eqref{eq 1}, $r(t)$ is bounded from below by $C_0 > 0$ and the underlying geodesic never goes to the cone tip. Using the shorthand notations $\tilde{\eta}:=C^{-1}_0\eta$ and $s(t):=\arctan\left({(t+r_0\tau_0)}/{C_0}\right)$, we transform the second and fourth equations of \eqref{geo}  into the unit speed geodesic equations  on $\Theta$,
	\begin{equation*}
		\left\{ \begin{aligned}
			\frac{d}{ds}\theta^i  &= g^{ij}_\Theta\tilde{\eta}_j; \\
			\frac{d}{ds}\tilde{\eta}_k&= -\frac{1}{2} \partial_{\theta^k} g^{ij}_\Theta\tilde{\eta}_i\tilde{\eta}_j. \\
		\end{aligned} \right.
	\end{equation*}
Consequently,  the projection of a geodesic with parameter $t$ from $C(\Theta)$ onto $\Theta$ corresponds exactly to a geodesic with parameter $s$ on $\Theta$. Note that
	$$
	\lim_{t \to \pm\infty} s(t) = \pm\frac{\pi}{2}  .
	$$
This implies that the maximum distance $d_{\max}$ on $\Theta$ is 
\begin{equation}\label{eqn: maximal traval time on Theta}
	d_{\max} = s(+\infty) - s(-\infty) = \pi.
\end{equation}
This is illustrated in Figure~\ref{figure_1.1} from \cite[Figure 2.]{MW01}.

	\begin{figure}
		\centering
		\includegraphics[width=\textwidth]{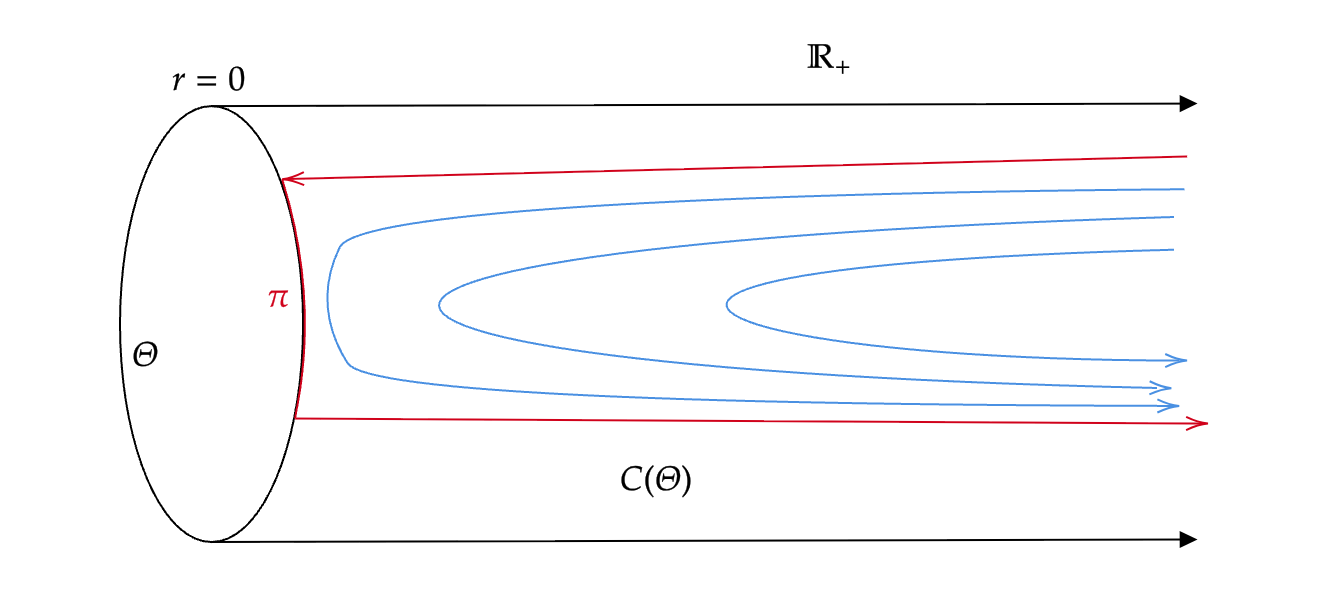}
		\caption{Maximum distance on $\Theta$}
		\label{figure_1.1}
	\end{figure}

	The triangle inequality on $C(\Theta)$,
		\begin{equation}\label{eqn: traval time}
	 	d(O, x)  \leq d(O,P)+d(x,P),
		\end{equation}with the distance function  $d(\cdot, \cdot)$ on $C(\Theta)$,
 suggests that, at any observation point $x$, the diffracted wave never arrives earlier than the incident waves whose paths do not intersect $P$.

	\subsection{The wave kernel on conic manifolds}
	In this section only, we use $(r_1,\theta_1)$ and $(r_2,\theta_2)$ to denote the left and right variables, respectively, of the Schwartz kernel of an operator.
	
	Let $(\Theta,g_\Theta)$ be a compact Riemannian $n$-manifold  without boundary, $\Delta_{\Theta}$ the Laplacian on $\Theta$, and $\Delta_{C(\Theta)}$ the Friedrichs extension of the Laplacian on $C(\Theta)$. 
	Away from the cone tip, $\Delta_{C(\Theta)}$ is given explicitly by
	\begin{equation}\label{lap}
		\Delta_{C(\Theta)}=-\frac{\partial^2}{\partial r^2}-\frac{n}{r}\frac{\partial}{\partial r}+\frac{1}{r^2}\Delta_\Theta.
	\end{equation}

Following \cite{CT82a}, for $k \in \mathbb{R}$, we denote by $\mathcal{D}^k$ the domain  of $\Delta_{C(\Theta)}^k$, which is defined by the norm 
	$$\left\|g\right\|_{\mathcal{D}^k} := \left\|(\mathrm{Id} + \Delta_{C(\Theta)})^k g\right\|_{L^2},$$ and  $\mathcal{D}^\infty:=\cap_{k\in\mathbb{Z}}\mathcal{D}^k$. In addition, we say \( f \in \mathcal{D}^k_{\mathrm{loc}}(\mathcal{U}) \) for  \(\mathcal{U} \subset C(\Theta)\) if there exists \( g \in \mathcal{D}^k(C(\Theta)) \) with \( f \equiv g \) on \(\mathcal{U}\).
	
For $i \in \mathbb{Z}_+$, suppose $\mu_i$ is the $i$-th eigenvalue of $\Delta_\Theta$, $\varphi_i$ is an eigenfunction of $\Delta_\Theta$ associated with $\mu_i$, and $\nu_i :=  \left(\mu_i + \left( n-1 \right)^2/4\right)^{1/2}$. By \cite[(0.5)-(0.7)]{CT82a}, the Schwartz kernel $K_{f(\lambda)}$ of operator $f(\sqrt{\Delta_{C(\Theta)}})$, for a Borel function $f$ on $\mathbb{R}$, takes the form 
	\begin{equation}\label{eqn : functional calculus}
	K_{f(\lambda)}(r_1,\theta_1,r_2,\theta_2)=(r_1r_2)^{-\frac{n-1}{2}}\sum_{i=0}^{\infty}\tilde{K}_{f(\lambda)}(r_1,r_2,\nu_i)\varphi_i(r_1, \theta_1)\bar{\varphi}_i(r_2, \theta_2),
	\end{equation}
	where $(r_1, \theta_1), (r_2, \theta_2) \in C(\Theta)$,
	\begin{equation*}
		\tilde{K}_{f(\lambda)}(r_1,r_2,\nu):=\int_{0}^{\infty}f(\lambda)J_\nu (\lambda r_1)J_\nu (\lambda r_2)\lambda d\lambda,
	\end{equation*}
	and $J_\nu(x)$ is the Bessel function of first kind.

	 Applying \eqref{eqn : functional calculus} to function $\sin (t\lambda)/\lambda$ yields the wave kernel $$K_{\sin (t\lambda)/\lambda} := \mathrm{Ker} \frac{\sin \left(t\sqrt{\Delta_{C(\Theta)}}\right)}{\sqrt{\Delta_{C(\Theta)}}}.$$ To shed light on the structure of the wave kernel, we carry out a partition of $C(\Theta)$	in line with wave propagation.
	 
	 At time $t$, the union of the wave fronts of incident and reflected waves emanating from $O$ is called the geometric wave front with respect to $O$ and denoted by $\mathcal{G}_{t}^{O}$, whereas the wave front $\mathcal{D}_{t}^{O}$ of diffracted waves with respect to $O$ is given by
	 \begin{equation*}
	 	\mathcal{D}_{t}^{O} := \left\{x\in \Omega:d(Q,P)=t-d(O,P)\right\}.
	 \end{equation*}
	 In addition,  $\mathcal{D}_{t}^{O}$ and the closure of $\mathcal{G}_{t}^{O}$ have a non-empty intersection $\mathcal{F}_{t}^{O}$. \cite[p. 322]{CT82a} claims the exact form of $\mathcal{F}_{t}^{O}$. We give a short proof for completeness.

	 \begin{lemma}\label{intersection}
	 	Let $O = (r,\phi)$ and $\xi\in S^*_{\phi}\Theta$. Denote by $\pi_\Theta : C(\Theta) \to \Theta$ the canonical projection. For $t\geq r_2$, we have
	 	\begin{align*}
	 		&\mathcal{F}_{t}^{O} = \{(t - r,\theta) : \exists\,\xi\in S^*_{\phi}\Theta, \, \theta=\gamma_{(\phi,\xi)}(\pi)\in\Theta \},\\
	 		&\pi_\Theta(\mathcal{F}_{t}^{O})=\{\gamma_{(\phi,\xi)}(\pi)\in\Theta:\xi\in S^*_{\phi}\Theta\}.
	 	\end{align*}
	 	When $\dim \Theta = 1$, $|\pi_\Theta(\mathcal{F}_{t}^{O})|$ is finite and $\pi_\Theta(\mathcal{F}_{t}^{O})$ is independent of $t$. Consequently, $$\mathcal{F}_{O}:=\bigcup_{t>r_2}\mathcal{F}_{t}^{O}=\mathbb{R}_+\times\pi_{\Theta}(\mathcal{F}_{O}),$$ and $|\pi_\Theta(\mathcal{F}_{O})|$ is finite.
	 \end{lemma}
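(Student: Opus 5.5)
Throughout, $r_2$ in the statement denotes the radial coordinate of the observation point $x=(r_2,\theta)$, and $r$ is the radial coordinate of the source $O$. For $t\ge r_2$ this places $x$ within the region reached at time $t$ by diffracted waves, and the lemma is stated for $t> r$, i.e. after the diffracted front has been born at the tip.

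\textbf{Step 1: describe the closure of the geometric front.} By \eqref{eq 1}, a geodesic from $O=(r,\phi)$ that avoids the tip has $C_0\neq0$. Its projection to $\Theta$ is a unit-speed geodesic $\gamma_{(\phi,\xi)}(s)$, with $s$ running over an interval of total length strictly less than $\pi$ by \eqref{eqn: maximal traval time on Theta}. Such geodesics are exactly the incident and reflected rays, so $\mathcal{G}_t^O$ is the set of points at time $t$ along them.

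\textbf{Step 2: compute the limit points on the diffracted front.} Points of $\overline{\mathcal{G}_t^O}$ lying on $\mathcal{D}_t^O=\{x : d(x,P)=t-r\}$ can only arise as limits of such geodesics with $C_0\to0^+$. These correspond to initial covectors with $\tau_0\to-1$ and $\eta_0/|\eta_0|\to\xi\in S^*_\phi\Theta$.

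Fix $t$ and write $a=t-r>0$. With $\tau_0 r_0=-r\sqrt{1-\epsilon^2}$ and $C_0=r\epsilon$, formula \eqref{eq 1} gives
\[
r(t)=\sqrt{(t-r\sqrt{1-\epsilon^2})^2+r^2\epsilon^2}\to a .
\]
The angular parameter is
\[
s(t)-s(0)=\arctan\frac{t-r\sqrt{1-\epsilon^2}}{r\epsilon}+\arctan\frac{\sqrt{1-\epsilon^2}}{\epsilon}.
\]
Each term tends to $\pi/2$, so the angular displacement tends to $\pi$. The projected point therefore tends to $\gamma_{(\phi,\xi)}(\pi)$ by continuity of the geodesic flow on the compact manifold $\Theta$.

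\textbf{Step 3: the reverse inclusion.} Given $\xi\in S^*_\phi\Theta$ and $\epsilon\downarrow0$, the corresponding sequence of geometric-front points converges to $(a,\gamma_{(\phi,\xi)}(\pi))$. This point lies on $\mathcal{D}_t^O$. Hence
\[
\mathcal{F}_t^O=\{(t-r,\gamma_{(\phi,\xi)}(\pi)) : \xi\in S^*_\phi\Theta\},
\]
and projecting to $\Theta$ gives the second formula, which visibly does not depend on $t$.

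The main obstacle is showing that there are no other limit points. Suppose points of $\mathcal{G}_t^O$ converge to a point with radius $a$. The initial data lie in a compact set, so a subsequence of them converges. If the limiting $C_0$ were positive, the limit geodesic would be a genuine geodesic avoiding the tip, and its time-$t$ point would satisfy $r(t)>a$. Indeed, by \eqref{eq 1}, $r(t)^2-a^2=2(t-r)(r+\tau_0 r)$ up to positive terms, and this is positive unless $\tau_0=-1$.

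One must also check that the limiting $\xi$ enters through the geodesic flow on $\Theta$ and not through reflections. For the cone $C(I_{4\pi\beta})$ reflections have already been unfolded by the method of images, so this is not an issue.

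\textbf{Step 4: the case $\dim\Theta=1$.} Here $\Theta$ is a circle and $S^*_\phi\Theta=\{\pm1\}$, so
\[
\pi_\Theta(\mathcal{F}_t^O)=\{\phi\pm\pi\},
\]
which has at most two points and is independent of $t$. Taking the union over $t>r$ gives
\[
\mathcal{F}_O=\mathbb{R}_+\times\pi_\Theta(\mathcal{F}_O),
\]
since $t-r$ sweeps out $\mathbb{R}_+$.
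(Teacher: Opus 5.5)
Your proof is correct and follows essentially the same route as the paper. You use the same family of initial covectors $(-\sqrt{1-\epsilon^2},\epsilon r\eta)$ with $\epsilon\to0$ and the same radial limit $t-r$ from \eqref{eq 1}. Your identity $r(t)^2-(t-r)^2=2tr(1+\tau_0)$ plays the role of the paper's $t=-r\tau(0)+r_1\tau(t)$ argument for $\mathcal{D}_t^O\cap\mathcal{G}_t^O=\emptyset$, and you use the same maximal angular travel $\pi$. Your explicit arctan computation and the compactness/subsequence argument make rigorous steps the paper states tersely, and the condition you actually use, $t>r$, is the intended reading of $t>r_2$ (with $r_2$ the source radius).
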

	 \begin{proof}
	 	Let $(r_1,\theta_1)\in \mathcal{F}_{t}^{O}$. Since $\Theta$ is boundaryless, the geometric wavefront is  comprised only of  incident waves.  Hence, we have
	 	\begin{multline}\label{discription of G_t^O}
	 		\mathcal{G}_{t}^{O}
	 		= \Bigl\{ \tilde{\gamma}_{(O,\tilde{\eta})}(t)
	 		:\tilde{\eta} = (\tau(0),\eta)\in S^*_OC(\Theta),\ \mbox{i.e.}\\  \tau(0)^2+r^{-2}|\eta|_{\Theta}^2 =1,\ \mbox{and}\ \tau(0)\neq -1 \Bigr\},
	 	\end{multline}
	 	where $\tilde{\gamma}_{(O,\tilde{\eta})}(t)$ denotes the geodesic in $C(\Theta)$ with initial date $(O, \tilde{\eta})$. 
	 	By \eqref{eq 1}, we have $t =  -r \tau(0) + r_1 \tau(t)$ on $\mathcal{G}_{t}^{O}$. 
	 	Since $t = r + r_1$ with $r,r_1>0$ on $\mathcal{D}_{t}^{O}$, it follows that $\tau(0) = -1$ and $\tau(t)=1$ on $\mathcal{G}_{t}^{O}\cap\mathcal{D}_{t}^{O}$ which contradicts \eqref{discription of G_t^O}. Consequently,
	 	\begin{equation}\label{inclusions}
	 		\mathcal{D}_{t}^{O} \cap \mathcal{G}_{t}^{O} = \emptyset,
	 	\qquad
	 	\mathcal{F}_{t}^{O} \subset \bar{\mathcal{G}}_{t}^{O} \setminus \mathcal{G}_{t}^{O}.
	 	\end{equation}
	 	
	 	Rewriting $(O,\tilde{\eta})$ in the form $(r,\phi, \pm\sqrt{1-\epsilon^{2}},  \epsilon r\eta)$, we obtain 
	 	\begin{multline*}
	 		\mathcal{G}_{t}^{O}
	 		= \Bigl\{ \tilde{\gamma}_{(O,\tilde{\eta}^-_\epsilon)}(t)
	 		:\tilde{\eta}^-_\epsilon = (-\sqrt{1-\epsilon^{2}},\epsilon r\eta)\in S^*_OC(\Theta),\ |\eta|_\Theta=1,\  \epsilon\in(0,1] \Bigr\}\\
	 		\bigcup  \Bigl\{ \tilde{\gamma}_{(O,\tilde{\eta}_\epsilon^+)}(t)
	 		:\tilde{\eta}_\epsilon^+ = (\sqrt{1-\epsilon^{2}},\epsilon r\eta)\in S^*_OC(\Theta),\ |\eta|_\Theta=1,\  \epsilon\in [0,1] \Bigr\}.
	 	\end{multline*}
	 	Let $(r^-_\epsilon(t),\theta^-_\epsilon(t)):=\tilde{\gamma}_{(O,\tilde{\eta}^-_\epsilon)}(t)$. Then, we use \eqref{eq 1} to derive
	 	\[
	 	r_\epsilon^-(t)
	 	= \sqrt{\bigl(t - \sqrt{1-\epsilon^{2}}\, r \bigr)^2 + \epsilon^{2} r^{2}}
	 	\;\longrightarrow\; r_1 \quad\mbox{as}\quad \epsilon\to 0,
	 	\]
	 	and we have $r+r_1=t$. Hence $\bar{\mathcal{G}}_{t}^{O} \setminus \mathcal{G}_{t}^{O} \subset \mathcal{D}_{t}^{O}.$ Together with \eqref{inclusions}, we have $\mathcal{F}_{t}^{O}=\bar{\mathcal{G}}_{t}^{O} \setminus \mathcal{G}_{t}^{O}.$
	 	 
	 	By \eqref{eqn: maximal traval time on Theta} and \eqref{inclusions}, we obtain 
	 	\[
	 	\mathcal{F}_{t}^{O}=
	 	\bar{\mathcal{G}}_{t}^{O} \setminus \mathcal{G}_{t}^{O} = \{(t - r,\theta) : \exists\,\xi\in S^*_{\phi}\Theta, \, \theta=\gamma_{(\phi,\xi)}(\pi)\in\Theta \}.
	 	\]
	 	Projecting to $\Theta$ gives $\pi_\Theta(\mathcal{F}_{t}^{O})=\{\gamma_{(\phi,\xi)}(\pi)\in\Theta:\xi\in S^*_{\phi}\Theta\}$. This implies that $\pi_\Theta(\mathcal{F}_{t}^{O})$ is independent of $t$. When $\dim \Theta = 1$, we have $S^*_{\phi}\Theta = \{-1,1\}$, and therefore
	 	\[
	 	|\pi_{\Theta}(\mathcal{F}_{t}^{O})|
	 	\le |S^*_{\phi}\Theta| = 2.
	 	\]
	 \end{proof}
	 
As in \cite{Cheeger-PNAS-79, CT82a, CT82b},	we fix $O \in C(\Theta)$ and $t > 0$, and divide $C(\Theta)$ by $\mathcal{D}_{t}^{O}$ and $\mathcal{G}_{t}^{O}$ as follows :
	 \begin{equation*}\label{eqn : partition of C(Theta)} 
	 	\left\{
	 	\begin{aligned}
	 	&\mbox{Region I:} &&0<t<d(Q,O),\\
	 	&\mbox{Region II:} &&d(Q,O)<t<d(Q,P)+d(O,P),\\
	 	&\mbox{Region III:} &&t>d(Q,P)+d(O,P),
	 	\end{aligned}\right.
	 \end{equation*} where  $O = (r, \phi)$, $Q=(r_1, \theta_1)$, and $P$ is the cone tip.
	 	 As shown in Figure \ref{figure0.1} for the exterior domain of a wedge in $\mathbb{R}^2$, Region I is the area where no wave front has appeared before time $t$; Region II is the area through which the geometric wave front has passed but the diffracted wave front has not yet reached; Region III is the area surrounded by the diffracted wave front.

	 \begin{figure}
	 	\centering
	 	\includegraphics[width=\textwidth]{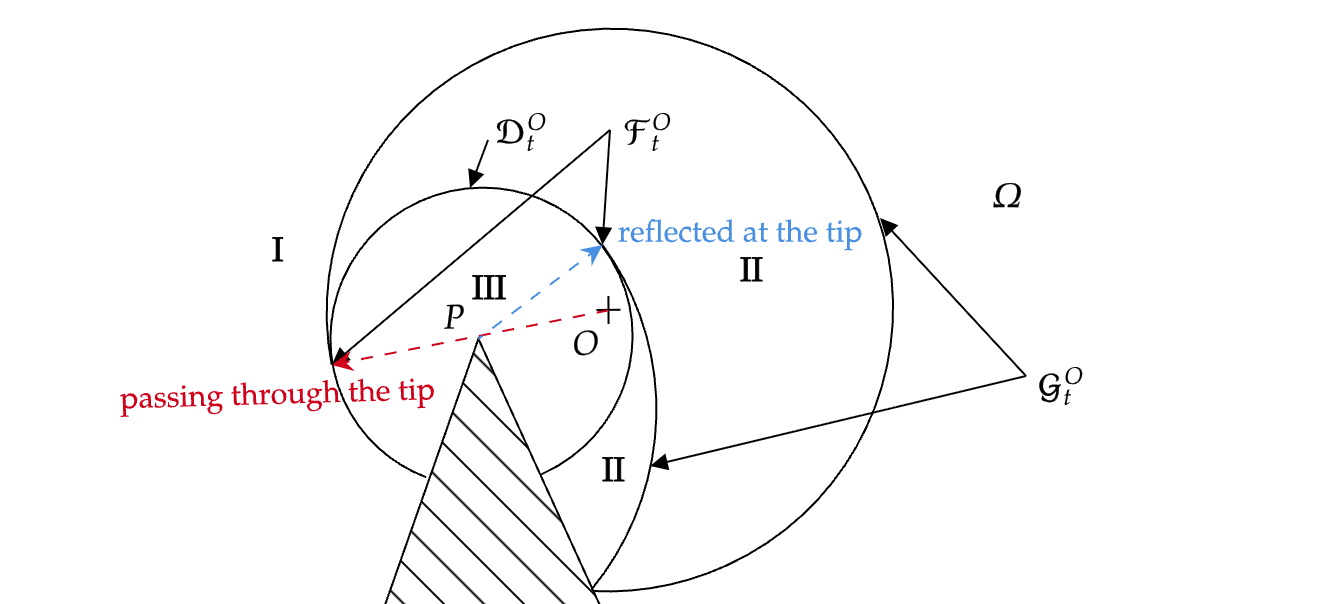}
	 	\caption{Space partition of $\Omega$}
	 	\label{figure0.1}
	 \end{figure}

	By \cite[(3.26), (3.27), (3.29)]{CT82a}, the wave kernel  $K_{\sin t\lambda/\lambda}$ takes the following specific forms:
	\begin{itemize}
		\item Region I ($0<t<|r_1-r_2|$):
		\begin{equation*}
			K_{\sin t\lambda/\lambda}(r_1,\theta_1,r_2,\theta_2)\equiv 0,
		\end{equation*}
		\item Region II ($|r_1-r_2|<t<r_1+r_2$): 
		\begin{align}\notag
		\lefteqn{K_{\sin t\lambda/\lambda}(r_1,\theta_1,r_2,\theta_2)} \\ &=	\frac{1}{\pi}(r_1r_2)^{-{(n-1)}/{2}}\int_{0}^{\beta_1}\frac{\cos (\nu s) ds}{(t^2-(r_1^2+r_2^2-2r_1r_2\cos s))^{1/2}}, \label{II_eq}
		\end{align}
		\item Region III ($t>r_1+r_2$):
		\begin{align}\label{III_eq}
	\lefteqn{K_{\sin t\lambda/\lambda}(r_1,\theta_1,r_2,\theta_2)} \nonumber\\
		 	&=\frac{1}{\pi}(r_1r_2)^{-{(n-1)}/{2}}\int_{0}^{\pi}\frac{\cos (\nu s)ds}{(t^2-(r_1^2+r_2^2-2r_1r_2\cos s))^{1/2}}\nonumber\\
			&\quad - \frac{1}{\pi}(r_1r_2)^{-{(n-1)}/{2}}\sin \pi\nu \int_{0}^{\beta_2}\frac{e^{-\nu s}ds}{(t^2-(r_1^2+r_2^2+2r_1r_2\cosh s))^{1/2}},
		\end{align}
	\end{itemize}
	where we used the shorthand notations  
	\begin{align*}\nu&:= \left(\Delta_{\Theta}+\frac{\left(n-1\right)^2}{4}\right)^{1/2}\\
		\beta_1&:=\cos^{-1}\left(\frac{r_1^2+r_2^2-t^2}{2r_1r_2}\right)\\
		\beta_2&:=\cosh^{-1}\left(\frac{t^2-r_1^2-r_2^2}{2r_1r_2}\right).
	\end{align*}

	\subsection{Microlocal analysis of the wave kernel on conic manifolds}
	
The linear wave propagation on smooth manifolds are described by \cite{Ho_FIO_1, Ho_FIO_2}'s classical theorem of propagation of singularities. In this manner, \cite{CT82a, CT82b} elucidated the diffraction on conic manifolds of product type.
	\begin{theorem} \label{conic}
		Let $C(\Theta)$ be a conic manifold with metric \eqref{metric} and $u(0,r, \theta) \in \mathcal{D}^k$ for some $k$. Suppose $$u(t) := \exp\left(i\sqrt{\Delta_{C(\Theta)}}t\right)u(0)$$ solves the half wave equation $$\partial_t u = i\sqrt{\Delta_{C(\Theta)}}u$$ on $C(\Theta)$ with initial data $u(0)$. Then the following holds.
		\begin{itemize}
			\item The half wave $u(t)$ is smooth in Region III. Namely, 
			if $\operatorname{supp} u(0)$ lies in the truncated cone $C_{0, r_2}(\Theta)$, then $u(t) \in \mathcal{D}_{\mathrm{loc}}^{\infty}(C_{0, t - r_2}(\Theta))$ for all $t > r_2$.
			\item The propagation of waves in the interior of $C(\Theta)$ obeys the laws of geometrical optics.
			That is, if $v \in T^*_{(r_2, \theta_2)}(C(\Theta))$ is a unit covector such that the Hamiltonian flow \( H_t(v) \) never reaches the point \( P \) (equivalently \( v \neq -dr \)), then  
			\begin{equation}\label{conic propagation inner}
				(r_2, \theta_2, v) \in \mathrm{WF}(u(0))\Longleftrightarrow H_t((r_2, \theta_2, v) ) \in \mathrm{WF}(u(t)) \quad \forall t > 0.
			\end{equation}
			\item The half wave $u(t)$ is smooth near $P$ provided no rays pass through a neighbourhood of $P$. That is, 
			suppose $u(0)|_{C_{0,a}(\Theta)} \in \mathcal{D}^{\infty}_{\mathrm{loc}}(C_{0,a}(\Theta))$ such that for all $t \geq 0$, $\pi_{C(\Theta)}(H_t(\mathrm{WF}(u(0))))\cap C_{0,a}(\Theta)=\emptyset$, then  
			\begin{equation}\label{conic propagation conical}
				u(t) \in \mathcal{D}^{\infty}_{\mathrm{loc}}(C_{0,a}(\Theta))\quad \forall t \geq 0.
			\end{equation}
			\item No singularities arise from the vertex $P$ independently. Namely, there holds for $t > r_2 > 0$ that
			\begin{displaymath}
				\exists \theta_2\in\Theta,\,  (r_2,\theta_2,-dr) \in \mathrm{WF}(u(0))\Longleftrightarrow\exists \theta_1\in\Theta,\,(t-r_2,\theta_1,-dr) \in \mathrm{WF}(u(t)).
			\end{displaymath}
		\end{itemize}
	\end{theorem}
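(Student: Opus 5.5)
The plan is to derive all four items from two ingredients. The first is the explicit Cheeger--Taylor kernel \eqref{eqn : functional calculus}, with its Region I/II/III forms \eqref{II_eq}--\eqref{III_eq}. The second is the classical Hörmander--Duistermaat propagation theorem, applied on the smooth part $C_{\epsilon,\infty}(\Theta)$. The glue between them is finite propagation speed for $\sqrt{\Delta_{C(\Theta)}}$. This holds because the Friedrichs extension commutes with $\Delta_\Theta$, so energy estimates on the separated radial problems give domain-of-dependence statements in $r$; equivalently, the kernel vanishes in Region I. Throughout, I would pass between $e^{it\sqrt{\Delta}}$ and $\cos(t\sqrt\Delta)$, $\sin(t\sqrt\Delta)/\sqrt\Delta$. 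Singularities of the half wave are those of the full wave restricted to one sheet of the characteristic variety, so it suffices to analyse the kernel $K_{\sin t\lambda/\lambda}$ and its $t$-derivative.

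\textbf{Step 1 (Region III smoothness).} Assume $\operatorname{supp}u(0)\subset C_{0,r_2}(\Theta)$, and for $r_1<t-r_2$ use \eqref{III_eq}. The first integral, over the full interval $[0,\pi]$ with integrand smooth in $(t,r_1,r_2,s)$, is the restriction of the free Euclidean-type kernel. The second integral, $\sin(\pi\nu)\int_0^{\beta_2}e^{-\nu s}(\cdots)^{-1/2}ds$, is smooth in $(t,r_1,r_2)$ for $t>r_1+r_2$, and after one integration by parts in $s$ it decays like $\nu^{-1}$. Repeated integration by parts in $s$ in both terms produces boundary contributions of the form $\sin(\pi\nu)\nu^{-k}$. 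I expect these to cancel exactly between the two pieces; this is the identity underlying \eqref{III_eq}, obtained by deforming a Sommerfeld contour. The result should be arbitrary polynomial decay in $\nu$, uniformly with $(r_1,r_2,t)$-derivatives. Since $\nu\sim\sqrt{\Delta_\Theta}$, this says the kernel maps $\mathcal{D}^k$ into $\mathcal{D}^\infty_{\mathrm{loc}}(C_{0,t-r_2})$. I regard this cancellation bookkeeping as the main technical obstacle. If it proves too delicate, my fallback is to express the Region III kernel through the Sommerfeld contour integral directly and deform the contour away from the real axis, which yields decay in $\nu$.

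\textbf{Step 2 (interior propagation).} Fix $v\neq -dr$, so that the ray $H_t(v)$ stays in $r\ge c>0$ for $t$ in a compact interval. Cut off $u(0)$ microlocally near $(r_2,\theta_2,v)$ and split $u(0)=u_1+u_2$, with $u_2$ smooth near the base point. On $C_{c/2,\infty}(\Theta)$ the operator $\partial_t^2+\Delta_{C(\Theta)}$ is a smooth strictly hyperbolic operator. Finite propagation speed lets us compare $u_1(t)$, for short times, with the solution on a smooth extension of the metric across $r<c/2$. Hörmander's theorem then gives \eqref{conic propagation inner} for short times, and iterating along the ray gives it for all times. The reverse implication follows by time reversal.

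\textbf{Step 3 (no rays near the tip, and the tip creates nothing).} For \eqref{conic propagation conical}, split $u(0)$ into a part in $\mathcal{D}^\infty$ and a part whose wavefront set avoids $C_{0,a}$ under the flow. By Step 2, the solution is smooth near $\{r=a\}$ for all $t$. A commutator/energy argument, using powers of $\Delta_{C(\Theta)}$ (which commute with the propagator) together with a radial cutoff, then propagates $\mathcal{D}^k_{\mathrm{loc}}$ regularity inward into $C_{0,a}(\Theta)$ for every $k$.

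\textbf{Step 4 (the last item).} Rays through $P$ are exactly the radial ones, with $C_0=0$ in \eqref{eq 1}: an incoming ray $(r_2,\theta_2,-dr)$ reaches $P$ at time $r_2$. Suppose first that no incoming radial covector lies in $\mathrm{WF}(u(0))$. Then Step 3, combined with Step 1 (which controls $r<t-r_2$) and Step 2 (which controls interior rays), shows that $(t-r_2,\theta_1,-dr)\notin\mathrm{WF}(u(t))$; with the paper's sign convention this is the outgoing direction at time $t$. Now suppose some incoming radial covector does lie in $\mathrm{WF}(u(0))$. Microlocalize near it and insert the Region II/III kernel near the diffracted front $r_1=t-r_2$. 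The jump at $t=r_1+r_2$, whose coefficient is $\sin(\pi\sqrt{\Delta_\Theta})$ applied to the $\Theta$-profile (the analogue of \eqref{asy_0}), is nonzero for at least one $\theta_1$. It therefore produces a wavefront at the corresponding point of the diffracted front. Alternatively, a geometric singularity limiting onto that front via Lemma \ref{intersection} gives the same conclusion. This yields the existential equivalence.
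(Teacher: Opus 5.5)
The paper gives no proof of Theorem~\ref{conic}. It quotes the result from Cheeger--Taylor \cite{CT82a, CT82b}, who read it off the explicit kernel, so your outline has to stand on its own.

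Steps 1 and 2 are fine in outline. In Step 1 the cancellation you expect does occur. The two terms of \eqref{III_eq} combine into a multiple of $\cos(\pi\nu)\,Q_{\nu-1/2}(\cosh\beta_2)$, which is $O(e^{-\nu\beta_2})$. Your contour fallback is therefore really the direct route.

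\textbf{The gap: the forward implication of the fourth item.} You argue that the diffracted jump, whose coefficient is $\sin(\pi\sqrt{\Delta_\Theta})$ applied to the $\Theta$-profile, is nonzero for some $\theta_1$. This is false in general.
If $\Theta$ is a circle of length $2\pi/N$ (e.g.\ $C(\Theta)=\mathbb{R}^2$), every $\nu$ is an integer and $\sin(\pi\nu)\equiv 0$. For any one-dimensional $\Theta$ the constant mode has $\nu=0$, so a $\theta$-independent incoming singularity produces no jump at all.
In such cases the outgoing singularity is carried by the geometric part: the $\cos(\pi\nu)$ term of \eqref{leading order singularity}, or the set $\mathcal{F}$, where Proposition~\ref{conormal diffracted wave} gives no information.

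Your Steps 1--3 only establish smoothness near $P$; Step 2 is two-sided only along rays that avoid $P$. Nothing in them shows that a singularity reaching $P$ is transmitted, and your appeal to Lemma~\ref{intersection} presupposes exactly that. In addition, a general element of $\mathrm{WF}(u(0))$ has no well-defined ``profile''.

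The missing idea is time reversal. Put $v(s):=\overline{u(t-s)}=e^{is\sqrt{\Delta_{C(\Theta)}}}\,\overline{u(t)}$, which is again a half wave with data in some $\mathcal{D}^{k'}$. Since conjugation negates covectors, $u$-outgoing radial covectors at time $t$ are $v$-incoming at $s=0$. Likewise, $u$-incoming ones at time $0$ are $v$-outgoing at $s=t$. The forward implication for $u$ at radius $r_2$ is therefore the no-creation implication for $v$ at radius $t-r_2$. Only the no-creation direction needs proof.

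\textbf{The no-creation direction.} As you wrote it, this direction is also mis-stated. You assume no incoming radial covector at all lies in $\mathrm{WF}(u(0))$, whereas the negated hypothesis concerns radius $r_2$ only. You need to localize:
\begin{enumerate}
\item By closedness of $\mathrm{WF}$ and compactness of $\Theta$, there are $\delta,a>0$ such that $\mathrm{WF}(u(0))$ contains no covector with $|r-r_2|<\delta$, $\tau<0$ and $|\eta|<a$.
\item For a radial cutoff $\chi$ supported in $|r-r_2|<\delta$, the piece $\chi u(0)$ then satisfies the hypothesis of the third item.
\item At times just after $r_2$, the piece $(1-\chi)u(0)$ is smooth near $P$: Region III handles its inner part and finite speed handles its outer part.
\item Step 2 then carries smoothness outward along the radial ray.
\end{enumerate}

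\textbf{Step 3.} The claim ``by Step 2 the solution is smooth near $\{r=a\}$ for all $t$'' is unjustified. Outgoing radial covectors there lie on rays from $P$, which Step 2 excludes. You need a continuity argument in time: such a covector at time $T+\epsilon$ traces back into $C_{0,a'}(\Theta)$ at a time $\le T$. You also need some $a'<a$, because rays may be tangent to $\{r=a\}$.

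\textbf{Other points.} Since $e^{it\sqrt{\Delta_{C(\Theta)}}}$ lacks finite propagation speed, run these arguments for the wave equation with data $(u(0),\,i\sqrt{\Delta_{C(\Theta)}}\,u(0))$, using pseudolocality of $\sqrt{\Delta_{C(\Theta)}}$. Finally, in the statement the covector $-dr$ at time $t$ must be read as the outgoing one, i.e.\ $+dr$ in the convention of the second item. Your sign remark glosses over this inconsistency.
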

	
	\begin{remark}
		We remark that the propagation of singularities on conic manifolds of non-product type was established by Melrose and Wunsch \cite{MW04}.
	\end{remark}

	Furthermore, we review the microlocal structure of the wave kernel on conic manifolds. We first recall the notion of conormal distributions.
	
	\begin{definition}
		Let $X$ be a smooth $n$-dimensional manifold and $Y \subset X$ a smooth embedded submanifold of codimension $k\ (1 \le k \le n-1)$. Denote by $N^*Y \subset T^*X \setminus 0$ the conormal bundle of $Y$
		\[
		N^*Y := \{ (y,\xi) \in T^*X : \xi(v)=0 \text{ for all } v \in T_yY\}.
		\]
		For $m \in \mathbb R$, the space of conormal distributions of order $m$ associated with $N^*Y$, denoted by $I^{m}(X,N^*Y)$, consists of all distributions $u$ such that
		\begin{itemize}
			\item For any $p\in Y$, there exists neighbourhood $U\ni p$ with local coordinates $x=(x',x'') \in \mathbb R^{k} \times \mathbb R^{n-k}$ in which $Y \cap U=\{x'=0\},$
			\item $u$ has an oscillatory integral representation in $U$,
			\[
			u(x)= (2\pi)^{-k} \int_{\mathbb R^{k}} e^{ i\, x' \cdot \xi }\, b(x'',\xi)\, d\xi,
			\]
			where the amplitude $b\in S^{m + \frac{n}{4} - \frac{k}{2}}(\mathbb R^{n-k}_{x''} \times \mathbb R^{k}_{\xi})$.
		\end{itemize}
		The principal symbol of $u\in I^{m}(X,N^*Y)$ is locally given by
		\[
		\sigma_p(u)(x'',\xi) = (2\pi)^{-k/2} \, b_{m}(x'',\xi)\, |dx' dx''|^{1/2}\, |d\xi|^{1/2}
		\quad \text{restricted to } N^*Y,
		\]
		and lies in the class
		\begin{equation}\label{pricipal symbol of conomal}
			\sigma_p(u) \in S^{m+n/4}(N^*Y)/S^{m+n/4-1}(N^*Y).
		\end{equation}
	\end{definition}
	
	Ford and Wunsch \cite{FW17} used the framework of conormal distributions to analyze the structure of the wave kernel on conic manifolds.
	
	\begin{proposition}[\mbox{\cite[Proposition 2.1]{FW17}}]\label{conormal diffracted wave}
		Let $C(\Theta)$ be a conic manifold with metric \eqref{metric}. Assume $(r_1,\theta_1)\in \mathcal{D}_{t}^{(r_2,\theta_2)}\setminus \mathcal{F}_{t}^{(r_2,\theta_2)}$. Then, we have the following:
		\begin{itemize}
			\item Near $(t,(r_1,\theta_1),(r_2,\theta_2))$ in $\mathbb{R}_+ \times C(\Theta) \times C(\Theta)$, the kernel $K_{\sin t\lambda/\lambda}$ is a conormal distribution associated with the conormal bundle of the diffracted front
			\[\mathcal{D}:= \{ (t,(r_1,\theta_1),(r_2,\theta_2)) : t = r_1 + r_2 \},\]
			of order $-\frac{5}{4}-\frac{n}{2}$:
			\begin{equation}\label{conormal order}
				K_{\sin t\lambda/\lambda} \;\in\; I^{-\frac{5}{4}-\frac{n}{2}}\!\Bigl(\mathbb{R}_+ \times C(\Theta) \times C(\Theta),\; N^*\mathcal{D}\Bigr);
			\end{equation}
			\item The leading order singularity of $K_{\sin t\lambda/\lambda}$ is
			\begin{equation}\label{leading order singularity}
				\frac{1}{\pi}(r_1r_2)^{-n/2}\left[-\frac{\pi}{2}\sin(\pi\nu)H(\epsilon_d)-\cos(\pi\nu)\log|\epsilon_d|\right],
			\end{equation}
			where
			\[
			\epsilon_d:=\mathrm{sgn}(t^2-(r_1+r_2)^2)\cdot\left|\frac{t^2-(r_1+r_2)^2}{r_1r_2}\right|^{1/2};
			\]
			\item $K_{\sin t\lambda/\lambda}$ admits a local oscillatory integral representation
			\[K_{\sin t\lambda/\lambda}(t,(r_1,\theta_1),(r_2,\theta_2))=\int_{\mathbb{R}_{\xi}}e^{\,i(r_1 + r_2 - t)\xi}\,a(t,r_1,\theta_1,r_2,\theta_2;\xi)\, d\xi,\]
			where the amplitude $a$ is a classical symbol of order $-1$ in $\xi$;
			\item $a$ has an asymptotic
			\begin{multline}\label{principal symbol}
				a(t,r_1,\theta_1,r_2,\theta_2;\xi)\;\equiv\;\frac{(r_1 r_2)^{-\frac{n}{2}}}{2\pi}\,\frac{\chi_a(\xi)}{2\,|\xi|} \Big(H(-\xi)\,K[e^{-i\pi \nu}](\theta_1,\theta_2)\\+ H(\xi)\,K[e^{i\pi \nu}](\theta_1,\theta_2)\Big)
				\pmod{S^{-3/2}},
			\end{multline}
			where:
			\begin{itemize}
				\item $\chi_a\in C^\infty(\mathbb{R})$ is a cutoff with $\chi_a(\xi)=0$ for $|\xi|<1$ and $\chi_a(\xi)=1$ for $|\xi|>2$;
				\item $K[e^{\mp i\pi \nu}](\theta_1,\theta_2)$ denotes the Schwartz kernel (in $\Theta\times\Theta$) of the operator $e^{\mp i\pi \nu}$;
				\item the remainder $S^{-3/2}$ is a symbol class of order $-3/2$ in $\xi$, smooth in $(t,(r_1,\theta_1),(r_2,\theta_2))$.
			\end{itemize}
		\end{itemize}
	\end{proposition}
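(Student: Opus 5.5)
The plan is to derive everything from the explicit Cheeger--Taylor formulas \eqref{II_eq} and \eqref{III_eq}, reading $\nu$ as the operator $\bigl(\Delta_\Theta+(n-1)^2/4\bigr)^{1/2}$ acting in the angular variables. Throughout, the hypothesis $(r_1,\theta_1)\notin\mathcal{F}_t^{(r_2,\theta_2)}$ is used in one way only. By Lemma~\ref{intersection}, it says that $\theta_1$ is not at $\Theta$-distance $\pi$ from $\theta_2$ along a geodesic. Hence the kernels of $\cos(\pi\nu)$, $\sin(\pi\nu)$ and $e^{\pm i\pi\nu}$ are smooth near $(\theta_1,\theta_2)$, since these are wave propagators on $\Theta$ evaluated at time $\pi$. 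So all operator-valued coefficients below may be treated as smooth functions of $(\theta_1,\theta_2)$.

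\textbf{Step 1: the Heaviside term from Region III.} Write $t^2-r_1^2-r_2^2-2r_1r_2\cosh s=2r_1r_2(\cosh\beta_2-\cosh s)$, and consider the second integral in \eqref{III_eq} as $t\downarrow r_1+r_2$, so that $\beta_2\to0$. Substitute $s=\beta_2\sigma$ and expand $\cosh\beta_2-\cosh s=\tfrac12\beta_2^2(1-\sigma^2)(1+O(\beta_2^2))$. The integral becomes $(r_1r_2)^{-1/2}\int_0^1 e^{-\nu\beta_2\sigma}(1-\sigma^2)^{-1/2}(1+O(\beta_2^2))\,d\sigma$. This equals $(r_1r_2)^{-1/2}\,\tfrac{\pi}{2}$ plus a function smooth in $\beta_2$ that vanishes at $\beta_2=0$. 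Since $\beta_2\sim|\epsilon_d|$, this term is $-\tfrac12(r_1r_2)^{-n/2}\sin(\pi\nu)H(\epsilon_d)$ plus terms vanishing to positive order in $\epsilon_d$.

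\textbf{Step 2: the logarithmic term.} The first integral in \eqref{III_eq}, and the integral in \eqref{II_eq} with $\beta_1\uparrow\pi$, both degenerate at $s=\pi$. Put $s=\pi-\sigma$. The denominator is then $\bigl(t^2-(r_1+r_2)^2+r_1r_2\sigma^2+O(\sigma^4)\bigr)^{1/2}$, i.e.\ $(r_1r_2)^{1/2}(\pm\epsilon_d^2+\sigma^2+\dots)^{1/2}$, with the sign fixed by the side of $\mathcal{D}$. The numerator $\cos(\nu s)$ is $\cos(\pi\nu)$ plus terms of order $\sigma$. The integral $\int_0^{c}(\sigma^2\pm\epsilon_d^2)^{-1/2}d\sigma$ is $-\log|\epsilon_d|$ plus a smooth part, on both sides, with the Region~II range $\sigma\ge\beta$ handled by the same computation. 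Together with Step~1 this yields \eqref{leading order singularity}. All remaining contributions are of the form $\epsilon_d^k\log|\epsilon_d|$, $\epsilon_d^k H(\epsilon_d)$ with $k\ge1$, or smooth.

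\textbf{Step 3: conormality and the symbol.} Step 2 shows the kernel is a finite combination of $H(\epsilon_d)$, $\log|\epsilon_d|$ and their higher-order companions, with coefficients smooth in the remaining variables. Since $\epsilon_d$ is a smooth multiple of $(t-r_1-r_2)^{1/2}$ up to sign conventions, it is better to work directly in the defining function $x=r_1+r_2-t$, with $\epsilon_d^2$ a smooth positive multiple of $-x$.

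Fourier transforming in $x$ then gives the representation of the third bullet, $\int e^{ix\xi}a\,d\xi$. The transform of $H(-x)$ is $\sim(2\pi)^{-1}(-i\xi)^{-1}$, and that of $\log|x|$ is $\sim-(2\pi)^{-1}|\xi|^{-1}$, both modulo smooth terms. Combining
\[
\tfrac{\pi}{2}\sin(\pi\nu)\,\frac{i}{\xi}+\cos(\pi\nu)\,\frac{1}{|\xi|}
\]
with the sign of $\xi$ produces $\tfrac{1}{2|\xi|}e^{\pm i\pi\nu}$ on $\{\pm\xi>0\}$. This is \eqref{principal symbol} after accounting for $\log|\epsilon_d|=\tfrac12\log|x|+$smooth. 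The companions $x^k\log|x|$ and $x^kH$ contribute symbols of order $\le-2$. The order count $-1=m+\tfrac{N}{4}-\tfrac12$ with $N=2n+3$ gives $m=-\tfrac54-\tfrac n2$, which is \eqref{conormal order}.

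\textbf{Main obstacle.} The hard part is making Steps 1--2 uniform when $\nu$ is an unbounded operator. The expansions in $\beta_2$ and $\sigma$ produce factors like $\nu\beta_2$ that are not small at high angular frequency. I would handle this by writing the remainders as integrals of $e^{-\nu s}$ and $\cos(\nu s)$ over $s$ near $0$ or $\pi$ against explicit weights. I would then use that, away from $\mathcal{F}$, the kernels of $e^{i s\nu}$ for $s$ in a neighbourhood of $\pi$ are smooth, together with the exponential decay of $e^{-\nu s}$ for $s>0$, to show the remainder kernels lie in the claimed symbol classes.
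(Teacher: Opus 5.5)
The paper gives no proof of this proposition; it is quoted from \cite{FW17}. Your route is to expand the Cheeger--Taylor formulas \eqref{II_eq}--\eqref{III_eq} at the diffracted front and then Fourier transform in $x=r_1+r_2-t$. That is the natural self-contained approach, and Steps 1--2 recover \eqref{leading order singularity} correctly: the jump $-\tfrac12(r_1r_2)^{-n/2}\sin(\pi\nu)$ comes from the $e^{-\nu s}$ integral, and the coefficient $-\tfrac1\pi(r_1r_2)^{-n/2}\cos(\pi\nu)$ of $\log|\epsilon_d|$ is the same on both sides.

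\textbf{Step 3 does not establish \eqref{principal symbol}.} With $f(x)=\int e^{ix\xi}a(\xi)\,d\xi$, the amplitude of $\log|x|$ is $-\frac{1}{2|\xi|}$, not $-\frac{1}{2\pi|\xi|}$, because the Fourier transform of $\log|x|$ is $-\pi/|\xi|$ for $\xi\neq0$. Consequently your displayed combination $\frac{\pi}{2}\sin(\pi\nu)\frac{i}{\xi}+\cos(\pi\nu)\frac{1}{|\xi|}$ is not a multiple of $e^{\pm i\pi\nu}$. Redoing the computation with $\log|\epsilon_d|=\tfrac12\log|x|+{}$smooth gives
\[
a\equiv\frac{(r_1r_2)^{-n/2}}{\pi}\Bigl[-\frac{\pi}{2}\sin(\pi\nu)\,\frac{i}{2\pi\xi}+\frac{1}{2}\cos(\pi\nu)\,\frac{1}{2|\xi|}\Bigr]=\frac{(r_1r_2)^{-n/2}}{2\pi}\,\frac{1}{2|\xi|}\bigl(\cos(\pi\nu)-i\,\mathrm{sgn}(\xi)\sin(\pi\nu)\bigr).
\]
The normalisation agrees with \eqref{principal symbol}, but you get $e^{-i\pi\nu}$ on $\{\xi>0\}$ and $e^{+i\pi\nu}$ on $\{\xi<0\}$, which is the opposite labelling. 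With the stated phase $e^{i(r_1+r_2-t)\xi}$, the printed amplitude would produce $+\tfrac12(r_1r_2)^{-n/2}\sin(\pi\nu)H(\epsilon_d)$, contradicting \eqref{leading order singularity}. The printed labels correspond to the phase $e^{i(t-r_1-r_2)\xi}$. Your ``$e^{\pm i\pi\nu}$ on $\{\pm\xi>0\}$'' was reached only through compensating slips. A correct derivation should flag this convention mismatch in the statement rather than confirm it.

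\textbf{Half-integer powers and classicality.} Step 3 silently turns the Step 2 remainders $\epsilon_d^kH(\epsilon_d)$ into $x^kH$. For odd $k$ these are half-integer powers $(t-r_1-r_2)_+^{k/2}$, and $k=1$ sits at symbol order $-\tfrac32$, not $\le-2$. That is harmless modulo $S^{-3/2}$, but the claim that $a$ is \emph{classical} of order $-1$ needs the odd powers to cancel. At leading order they do: the $-\nu\beta_2$ term of Step 1 and the odd part $\sigma\nu\sin(\pi\nu)$ of $\cos(\nu(\pi-\sigma))$ in Step 2 contribute $\pm\frac{1}{\pi}(r_1r_2)^{-n/2}\nu\sin(\pi\nu)\,\epsilon_d$. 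You still have to show this to all orders. One way is Heine's integral, which rewrites the Region III kernel as $\frac1\pi(r_1r_2)^{-(n-1)/2}\cos(\pi\nu)\int_{\beta_2}^\infty e^{-\nu s}(r_1^2+r_2^2+2r_1r_2\cosh s-t^2)^{-1/2}\,ds$. After the substitution $w=\cosh s$, only $\log$ and integer powers of $\cosh\beta_2-1$ appear.

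\textbf{Your ``main obstacle''.} In Step 1 the relevant $s$ lie in $[0,\beta_2]$ with $\beta_2\to0$, so the decay of $e^{-\nu s}$ plays no role. Two facts are what you need:
\begin{itemize}
\item The family $\{e^{-s\nu}\}_{0\le s\le1}$ is bounded in $\Psi^0(\Theta)$. Hence $\partial_s^k$ of the kernel of $e^{-s\nu}\sin(\pi\nu)$, which is the kernel of $(-\nu)^ke^{-s\nu}\sin(\pi\nu)$, is smooth near $(\theta_1,\theta_2)$ uniformly up to $s=0$, because the kernel of $\sin(\pi\nu)$ is.
\item The portion of the $\cos(\nu s)$ integrals away from $s=\pi$ is smoothing, because its weight is even in $s$.
\end{itemize}
With these, every coefficient is a genuine smooth function of $(s,\theta_1,\theta_2)$, and Steps 1--2 become exact scalar computations. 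For instance, with $\tau=2\sin(\sigma/2)$ the denominator is exactly $(r_1r_2)^{1/2}(\tau^2\pm\epsilon_d^2)^{1/2}$.
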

	
	When $\Theta = \mathbb{R}/2\pi\beta\mathbb{Z}$, \cite[(4.11)]{CT82b} calculates that
	\begin{equation}\label{wave}
		\sin\left( s\nu\right)(\delta_{\theta_2})(\theta_1)=\frac{\beta^{-1}}{2\pi}\bigg(\frac{\sin \beta^{-1}(s+\theta)}{2-2\cos\beta^{-1}(s+\theta)}+\frac{\sin \beta^{-1}(s-\theta)}{2-2\cos\beta^{-1}(s-\theta)}\bigg),
	\end{equation}
	where $\theta=\theta_1-\theta_2$. It follows from the method of characteristics that
	\begin{equation}\label{wave_cos}
		\cos \left(s\nu\right)(\delta_{\theta_2})(\theta_1)=\frac{1}{2}[\delta_{s}(\theta)+\delta_{-s}(\theta)],
	\end{equation}
	which vanishes on $\{\theta_1:\theta\not\equiv \pm s\pmod{2\pi\beta}\}$. Hence, by \eqref{leading order singularity} and \eqref{wave_cos}, the leading order singularity of  $$\frac{\sin\left(t\sqrt{\Delta_{C(\Theta)}}\right)}{\sqrt{\Delta_{C(\Theta)}}}(\delta_{(r_2,\theta_2)})(r_1,\theta_1)$$ 
	on $\mathcal{D}^{O}_{t}\setminus\mathcal{F}_{t}^{O}$ is 
	\begin{align}\label{pricipal term on I}
		u^{\mathrm{prin}}_{(r_2,\theta_2)}(r_1,\theta_1):=-\frac{1}{\pi}(r_1r_2)^{-1/2}&\bigg[\frac{\pi}{2}\sin(\pi\nu)(\delta_{(\theta_2)})(\theta_1)H(-\epsilon_d)\nonumber\\
		&\qquad\qquad+\cos(\pi\nu)(\delta_{(\theta_2)})(\theta_1)\log|\epsilon_d|\bigg]\nonumber\\
		=-\frac{1}{2}(r_1r_2)^{-1/2}&\sin(\pi\sqrt{\Delta_{\Theta}})(\delta_{(\theta_2)})(\theta_1)H(t-r_1-r_2).
	\end{align}
	In terms of Sobolev regularities of conormal distributions (for example \cite[Theorem 18.2.8]{Ho_PDO_3}), by \eqref{principal symbol}, the remainder satisfies
	\begin{equation}\label{regularity of remained term of wave kernel}
		\frac{\sin\left(t\sqrt{\Delta_{C(\Theta)}}\right)}{\sqrt{\Delta_{C(\Theta)}}}(\delta_{(r_2,\theta_2)})(r_1,\theta_1)-u^{\mathrm{prin}}_{(r_2,\theta_2)}(r_1,\theta_1)\in \bigcap_{\epsilon>0}H^{1-\epsilon}(U),
	\end{equation} 
	where $U\subset C(\Theta)$ and $U\cap \mathcal{F}_{t}^{O}=\emptyset$.

	\section{The diffracted waves by a corner in $\mathbb{R}^2$}\label{sec3}

	As explained in Section \ref{sec1}, we have reformulated the obstacle problem \eqref{eq_1} as the parameter identification problem on a cone over a smooth manifold without boundary, as given by the Cauchy problem \eqref{conical}.  Recall $O=(r, \phi)$. Let $x=(r_1,\theta_1)$ and define $\alpha:=\theta_1-\phi$. By the principle of superposition, the solution of  \eqref{conical} is of the form
	\begin{multline*}
		u_{\Omega}(t,r_1,\theta_1,O)=\\
		\frac{\sin\left(t\sqrt{\Delta_{C(I_{4\pi\beta})}}\right)}{\sqrt{\Delta_{C(I_{4\pi\beta})}}}(\delta_{(r,\phi)})(r_1,\theta_1)+\frac{\sin\left(t\sqrt{\Delta_{C(I_{4\pi\beta})}}\right)}{\sqrt{\Delta_{C(I_{4\pi\beta})}}}(\delta_{(r,-\phi)})(r_1,\theta_1).
	\end{multline*}
	In  $\mathcal{D}_{t}^{O}\setminus\mathcal{F}_{t}^{O}$, $u_{\Omega}$ is the diffracted waves. The principal terms of $u_{\Omega}$ satisfy
	\begin{align}\label{principal diffracted}
		u_{\Omega}(t,r_1,\theta_1,O)-\left[u^{\mathrm{prin}}_{(r,\phi)}(r_1,\theta_1)+u^{\mathrm{prin}}_{(r,-\phi)}(r_1,\theta_1)\right]\in \bigcap_{\epsilon>0}H^{1-\epsilon}(U),
	\end{align}
	where $U\subset \Omega$ and $U\cap \mathcal{F}_{t}^{O}=\emptyset$. Hence, by \eqref{pricipal term on I}, the principal diffracted waves in $\Omega$ take the form
	\begin{align*}
		&u_{\mathcal{D},P}^{\mathrm{prin}}(t, x, O):=u^{\mathrm{prin}}_{(r,\phi)}(r_1,\theta_1)+u^{\mathrm{prin}}_{(r,-\phi)}(r_1,\theta_1)\nonumber\\
		&=-\frac{1}{2}(rr_1)^{-1/2}H(t-r-r_1)\left[\sin\left(\pi\sqrt{\Delta_{I_{4\pi \beta}}}\right)\delta_{\phi}+\sin\left(\pi\sqrt{\Delta_{I_{4\pi \beta}}}\right)\delta_{-\phi}\right],
	\end{align*}
	which is \eqref{asy_0}. Substituting \eqref{wave} into \eqref{asy_0}, we obtain
	\begin{multline}\label{diffracted wave}
		u_{\mathcal{D},P}^{\mathrm{prin}}(t, x, O)=-\frac{(rr_1)^{-1/2}}{16\pi\beta}H(t-r-r_1)\bigg[\cot\left(\frac{\pi+\alpha}{4\beta}\right)+\cot\left(\frac{\pi-\alpha}{4\beta}\right) \\+\cot\left(\frac{\pi+\alpha+2\phi}{4\beta}\right)+\cot\left(\frac{\pi-\alpha-2\phi}{4\beta}\right)\bigg].
	\end{multline}
	
	We introduce the shorthand notations 
	\begin{align}\label{4.1}
		S_{\phi,\lambda}(\alpha)&:=\lambda\sum_{j=1}^{4}S^{(j)}_{\phi,\lambda}(\alpha), \end{align} with $\lambda:= (2\beta)^{-1}$ and \begin{align*} S^{(j)}_{\phi,\lambda}(\alpha)&:=\cot\bigg(\frac{\pi+(-1)^{j-1}\alpha+(-1)^{j-1}2\left[(j-1)/2\right]\phi}{2}\lambda\bigg).\notag
		\end{align*}
In terms of \eqref{4.1}, the oscillatory part of \eqref{diffracted wave} corresponds to $S_{\phi, (2\beta)^{-1}}(\alpha)$.

Note that $S^{(j)}_{\phi,\lambda}(\alpha)$ is a periodic function with minimal positive period $2\pi/\lambda$. Hence, $2\pi/\lambda$ is also a period of $S_{\phi,\lambda}(\alpha)$.
	
	\begin{remark}\label{concave corner which diffraction vanishes}
		If $\lambda\in\mathbb{Z}_+$, 
		\begin{equation*}
			\cot\left(\frac{\pi+\alpha}{2}\lambda\right)+\cot\left(\frac{\pi-\alpha}{2}\lambda\right)=\cot\left(\frac{\pi+\alpha}{2}\lambda\right)+\cot\left(-\frac{\pi+\alpha}{2}\lambda\right)=0.
		\end{equation*}
		
		If $\lambda\in\frac{1}{2}\mathbb{Z}_+$ and $\phi=\pi/(2\lambda)$, 
		\begin{align*}
			S_{\phi,\lambda}(\alpha)&=2\lambda\left[\cot\left(\left(\pi+\alpha\right)\lambda\right)+\cot\left(\left(\pi-\alpha\right)\lambda\right)\right]\\
			&=2\lambda\left[\cot\left(\left(\pi+\alpha\right)\lambda\right)-\cot\left(\left(\pi+\alpha\right)\lambda\right)\right]=0.
		\end{align*}
		Consequently, \eqref{4.1} vanishes in both cases, indicating that no diffraction occurs.
	\end{remark}
	
We are now in the position to prove that the values of	$S_{\phi,\lambda}(\alpha)$ uniquely determine the parameters $\phi$ and $\lambda$. More precisely,
	
	\begin{theorem}\label{T4.1}
		Let $\lambda,\tilde{\lambda}\in (1/2, \infty) \setminus \frac{1}{2}\mathbb{Z}$ and $0<\phi,\tilde{\phi}<\pi/\lambda$. If there exist $\epsilon>0$ and $\alpha_0 \in \mathbb{R}$ such that  
		\begin{equation}\label{eqn : data of S}
			S_{\phi,\lambda}(\alpha)=S_{\tilde{\phi},\tilde{\lambda}}(\alpha), \quad
		\forall	\alpha\in(\alpha_0-\epsilon,\alpha_0+\epsilon),
	\end{equation} then we have
		\begin{equation*}
	  \lambda=\tilde{\lambda},\quad \phi=\tilde{\phi}.
		\end{equation*}
	\end{theorem}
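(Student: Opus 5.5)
The plan is to use complex analyticity and compare the poles of the two sides of \eqref{eqn : data of S}.

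\textbf{Step 1: extend to all of $\mathbb{C}$.} Each term $S^{(j)}_{\phi,\lambda}$ is a cotangent of an affine function of $\alpha$, so $S_{\phi,\lambda}$ and $S_{\tilde\phi,\tilde\lambda}$ are meromorphic on $\mathbb{C}$. They agree on $(\alpha_0-\epsilon,\alpha_0+\epsilon)$, hence by the identity theorem they agree as meromorphic functions on $\mathbb{C}$. In particular they have the same poles with the same residues.

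\textbf{Step 2: list poles and residues.} Using $\cot z\sim 1/z$, the term $\lambda S^{(j)}_{\phi,\lambda}$ has simple poles with residue $\pm2$:
\begin{itemize}
\item residue $+2$ on $-\pi+\tfrac{2\pi}{\lambda}\mathbb{Z}$ (from $j=1$) and on $-\pi-2\phi+\tfrac{2\pi}{\lambda}\mathbb{Z}$ (from $j=3$);
\item residue $-2$ on $\pi+\tfrac{2\pi}{\lambda}\mathbb{Z}$ (from $j=2$) and on $\pi-2\phi+\tfrac{2\pi}{\lambda}\mathbb{Z}$ (from $j=4$).
\end{itemize}

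\textbf{Step 3: rule out or isolate coincidences between cosets.}
\begin{itemize}
\item Two $+2$ cosets never merge. Merging would require $\phi\in\tfrac{\pi}{\lambda}\mathbb{Z}$, which is excluded by $0<\phi<\pi/\lambda$. The same holds for the two $-2$ cosets.
\item A $+2$ coset cancelling a $-2$ coset happens only in three ways:
  \begin{itemize}
  \item $-\pi\equiv\pi$ modulo $2\pi/\lambda$, i.e. $\lambda\in\mathbb{Z}$, which is excluded;
  \item $-\pi\equiv\pi-2\phi$, i.e. $\phi\equiv\pi \pmod{\pi/\lambda}$;
  \item $-\pi-2\phi\equiv\pi$, i.e. $\phi\equiv-\pi\pmod{\pi/\lambda}$.
  \end{itemize}
\item The last two cannot hold simultaneously, since that would force $2\lambda\in\mathbb{Z}$.
\item In the degenerate case $\phi\equiv\pi\pmod{\pi/\lambda}$, periodicity of $\cot$ gives $S^{(4)}=-S^{(1)}$ identically. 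Then $S_{\phi,\lambda}=\lambda\bigl[S^{(2)}+S^{(3)}\bigr]$, with $S^{(3)}=\cot\bigl((3\pi+\alpha)\lambda/2\bigr)$. The case $\phi\equiv-\pi$ is symmetric.
\end{itemize}
So the set $P_+$ of residue-$(+2)$ poles is a union of one or two cosets of $\tfrac{2\pi}{\lambda}\mathbb{Z}$, and it is never empty, because $2\lambda\notin\mathbb{Z}$ leaves at least one surviving coset. The same holds for $P_-$. The number of surviving cosets in each is $2$ in the generic case and $1$ in the degenerate case.

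\textbf{Step 4: recover $\lambda$.} I would read off $\lambda$ from the translation-invariance group $G$ of $P_+$ together with its density. Because $P_+$ is a union of cosets of $\tfrac{2\pi}{\lambda}\mathbb{Z}$, the group $G$ contains $\tfrac{2\pi}{\lambda}\mathbb{Z}$. Because $P_+$ has at most two points per period, $G$ is either $\tfrac{2\pi}{\lambda}\mathbb{Z}$ or $\tfrac{\pi}{\lambda}\mathbb{Z}$; the second option occurs only when $2\phi\equiv\pi/\lambda$, i.e. $\phi=\pi/(2\lambda)$. The density of $P_+$ is $m\lambda/(2\pi)$ with $m\in\{1,2\}$ the number of cosets.

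Comparing the data $(G,\text{density})$ for $\lambda$ and $\tilde\lambda$ leaves only $\tilde\lambda\in\{\lambda,2\lambda,\lambda/2\}$, with the ratio-$2$ options paired with a mismatch of the number of cosets. To exclude the ratio-$2$ options I would use the explicit positions of poles. Both $P_+$ and $\tilde P_+$ contain $-\pi$ or $-3\pi$ (in the degenerate case the survivor of $j=3$ sits at $-3\pi+\tfrac{2\pi}{\lambda}\mathbb{Z}$). Similarly $P_-$ contains $\pi$ or $3\pi$. The offset between $P_+$ and $P_-$ is therefore $2\pi$ or $4\pi$ modulo the lattice, and this offset must be compatible with both lattices. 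Compatibility with both $\tfrac{2\pi}{\lambda}\mathbb{Z}$ and $\tfrac{2\pi}{2\lambda}\mathbb{Z}$ should force a relation like $2\lambda\in\mathbb{Z}$ or $4\lambda\in\mathbb{Z}$. The bookkeeping I expect to be the main obstacle is exactly this: enumerating the mixed generic/degenerate/equally-spaced cases and checking that each ratio-$2$ case contradicts $\lambda,\tilde\lambda\notin\tfrac12\mathbb{Z}$, $\lambda,\tilde\lambda>1/2$. This may need extra care in the subcase $4\lambda\in\mathbb{Z}$, $2\lambda\notin\mathbb{Z}$. If the pole-set argument stalls there, I would fall back on comparing the Laurent/Fourier coefficients of the two sides as $\operatorname{Im}\alpha\to\pm\infty$, where each $\cot$ expands in powers of $e^{\pm i\lambda\alpha}$, so matching the frequencies present yields $\lambda=\tilde\lambda$.

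\textbf{Step 5: recover $\phi$.} Once $\lambda=\tilde\lambda$, the cosets in $P_+$ other than $-\pi+\tfrac{2\pi}{\lambda}\mathbb{Z}$ determine $2\phi$ modulo $2\pi/\lambda$. In the degenerate case, the condition $\phi\equiv\pm\pi\pmod{\pi/\lambda}$ itself determines $\phi$. Since $2\phi,2\tilde\phi\in(0,2\pi/\lambda)$, congruence modulo $2\pi/\lambda$ gives $\phi=\tilde\phi$.
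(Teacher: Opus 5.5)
\textbf{Verdict: correct, by a different route from the paper.} Your proof works, and the step you flag as the main obstacle closes at once.

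\textbf{Comparison with the paper.} Both arguments start with meromorphic continuation and the identity theorem. The paper then works with the \emph{minimal period} of $S_{\phi,\lambda}$ on $\mathbb{R}$. Its lemma (minimal period $2\pi/\lambda$ unless $\phi=\pi/(2\lambda)$) is proved by counting unsigned poles per period with a pigeonhole argument. That needs a long case analysis on which pole cosets are $\pi/\lambda$-translates of one another, including an auxiliary function and a rationality argument for $\lambda$. The paper then splits into three cases according to whether $\phi=\pi/(2\lambda)$ and $\tilde\phi=\pi/(2\tilde\lambda)$. It recovers $\phi$ from the explicit trigonometric form of the $\phi$-dependent pair.

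You instead keep the residues, so the poles split into $P_+$ and $P_-$, each a union of one or two cosets of $\frac{2\pi}{\lambda}\mathbb{Z}$. The group $G$ and the density of $P_+$ then do the work of the paper's period lemma almost for free, and $\phi$ is read off from coset positions. The sign of the residue is the extra invariant that buys this economy. Your ratio-$2$ case corresponds to the paper's Case 3.

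\textbf{Closing the ratio-$2$ case.} No $4\lambda\in\mathbb{Z}$ subcase arises.
\begin{itemize}
\item Suppose $m=2$, $\tilde m=1$ and $\tilde\lambda=2\lambda$. Then $\tilde G=\frac{2\pi}{\tilde\lambda}\mathbb{Z}=\frac{\pi}{\lambda}\mathbb{Z}=G$ forces $\phi=\pi/(2\lambda)$. Hence $P_+=-\pi+\frac{\pi}{\lambda}\mathbb{Z}$ and $P_-=\pi+\frac{\pi}{\lambda}\mathbb{Z}$.
\item On the other side, $\tilde P_+=-3\pi+\frac{\pi}{\lambda}\mathbb{Z}$ if $\tilde\phi\equiv\pi$, and $\tilde P_-=3\pi+\frac{\pi}{\lambda}\mathbb{Z}$ if $\tilde\phi\equiv-\pi \pmod{\pi/\tilde\lambda}$.
\item Either way, matching the pole sets forces $2\pi\in\frac{\pi}{\lambda}\mathbb{Z}$, i.e.\ $2\lambda\in\mathbb{Z}$, which is excluded.
\end{itemize}

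\textbf{The fallback.} You do not need it. If you did use it, matching the set of frequencies alone would not rule out this case; you must compare coefficients. For $\mathrm{Im}\,\alpha\to+\infty$ one has $S_{\phi,\lambda}(\alpha)=4\lambda\sum_{k\ge1}\sin(k\pi\lambda)\bigl(1+e^{2ik\lambda\phi}\bigr)e^{ik\lambda\alpha}$. In the mixed case the coefficients of $e^{2i\lambda\alpha}$ on the two sides differ by the factor $1+e^{2i\tilde\lambda\tilde\phi}\neq 1$.

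\textbf{The hypothesis on $\tilde\phi$.} Like the paper, you tacitly read it as $0<\tilde\phi<\pi/\tilde\lambda$. Under the literal $\tilde\phi<\pi/\lambda$ your argument still works. A merger of same-sign cosets on the $\tilde\lambda$ side would create residues $\pm4$, which $S_{\phi,\lambda}$ never has.
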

	\begin{proof}

Consider the meromorphic extension of $S_{\phi, \lambda}$ to $\mathbb{C}$,
\begin{align}\label{3.1.1}
	\mathcal{S}_{\phi,\lambda}(z)&:=\lambda\sum_{j=1}^{4}\mathcal{S}^{(j)}_{\phi,\lambda}(z),\quad z \in \mathbb{C},	\end{align} where each term reads \begin{align*} \mathcal{S}^{(j)}_{\phi,\lambda}(z)&:=\cot\bigg(\frac{\pi+(-1)^{j-1}z+(-1)^{j-1}2[(j-1)/2]\phi}{2}\lambda\bigg),\quad z \in \mathbb{C}. \notag
\end{align*} This is a meromorphic function having only countably many isolated poles. Denote by $\mathcal{P}(f)$   the collection of poles of a meromorphic function $f(z)$. Then $\mathbb{C}\setminus(\mathcal{P}(\mathcal{S}_{\phi,\lambda})\cup\mathcal{P}(\mathcal{S}_{\tilde{\phi},\tilde{\lambda}}))$ is connected. 

Then \eqref{eqn : data of S} implies that $$\mathcal{S}_{\phi,\lambda}(z)=\mathcal{S}_{\tilde{\phi},\tilde{\lambda}}(z),\quad \forall z\in(\alpha_0-\epsilon,\alpha_0+\epsilon).$$  For two holomorphic functions defined on a connected domain, if they agree on a set with an accumulation point in the domain, then they must coincide identically. Hence, we have $\mathcal{S}_{\phi,\lambda}(z)=\mathcal{S}_{\tilde{\phi},\tilde{\lambda}}(z)$ for all $z\in\mathbb{C}$. Therefore,  $S_{\phi,\lambda}(\alpha)=S_{\tilde{\phi},\tilde{\lambda}}(\alpha)$ for all $\alpha \in \mathbb{R}$.
It follows that the minimal positive periods $T_{\min}$ and $\tilde{T}_{\min}$ of $S_{\phi,\lambda}(\alpha)$ and $S_{\tilde{\phi},\tilde{\lambda}}(\alpha)$ are equal.
With regard to the minimal positive period, we assert  
\begin{lemma}\label{T4.1_lem}
	The minimal positive period of $S_{\phi,\lambda}(\alpha)$ is $T_{\min} =2\pi/\lambda$ when $\phi\neq\pi/(2\lambda)$.
\end{lemma}
We assume that Lemma \ref{T4.1_lem} is true, and provide the proof later in this section. Now, we divide the proof of Theorem \ref{T4.1} in the following cases.

\subsection*{Case 1}
If $\phi\neq\pi/(2\lambda)$ and $\tilde{\phi}\neq\pi/(2\tilde{\lambda})$, we obtain $\lambda=\tilde{\lambda}$. 

Moreover, $S_{\phi,\lambda}(\alpha)=S_{\tilde{\phi},\tilde{\lambda}}(\alpha)$ also yields that
\begin{multline*}
	\cot\!\left(\frac{\pi+\alpha+2\phi}{2}\lambda\right)+\cot\!\left(\frac{\pi-\alpha-2\phi}{2}\lambda\right) \\
	=\cot\!\left(\frac{\pi+\alpha+2\tilde{\phi}}{2}\lambda\right)+\cot\!\left(\frac{\pi-\alpha-2\tilde{\phi}}{2}\lambda\right), \quad \forall \alpha \in \mathbb{R},
\end{multline*}
reducing to
\begin{equation*}
	\frac{\sin(\pi\lambda)}{\cos(\pi\lambda)-\cos((\alpha+2\phi)\lambda)}=\frac{\sin(\pi\lambda)}{\cos(\pi\lambda)-\cos((\alpha+2\tilde{\phi})\lambda)}, \quad \forall \alpha \in \mathbb{R}.
\end{equation*}
Since $\lambda\notin\mathbb{Z},$ this is equivalent to $2\lambda\phi\equiv 2\lambda\tilde{\phi}\pmod{2\pi},$ i.e. $\phi\equiv\tilde{\phi}\pmod{\pi/\lambda}.$

\subsection*{Case 2}
If $\phi=\pi/(2\lambda)$ and $\tilde{\phi}=\pi/(2\tilde{\lambda}),$ \eqref{4.1} becomes
\begin{align*}
	S_{\phi,\lambda}(\alpha)=2\lambda\left[\cot\left(\left(\pi+\alpha\right)\lambda\right)+\cot\left(\left(\pi-\alpha\right)\lambda\right)\right]
	=\frac{4\lambda\sin(2\pi\lambda)}{\cos(2\alpha\lambda)-\cos(2\pi\lambda)}.
\end{align*}
Since $\lambda\notin\tfrac{1}{2}\mathbb{Z}$ as assumed, we have $S_{\phi,\lambda}(\alpha)\not\equiv 0$ with minimal period $T_{\min}=\pi/\lambda$ and then necessarily $\lambda=\tilde{\lambda}.$ 

\subsection*{Case 3}
If $\phi=\pi/(2\lambda)$ and $\tilde{\phi}\neq \pi/(2\tilde{\lambda})$, Lemma \ref{T4.1_lem} gives $\tilde{\lambda}=2\lambda$. Therefore, it follows from $S_{\phi,\lambda}(\alpha)=S_{\tilde{\phi},\tilde{\lambda}}(\alpha)$ that
\[
\cot\!\left(\frac{\pi+\alpha+2\tilde{\phi}}{2}\,\tilde{\lambda}\right)
=\cot\!\left(\frac{-\pi+\alpha+2\tilde{\phi}}{2}\,\tilde{\lambda}\right), \quad \forall \alpha\in\mathbb{R}.
\]
This is equivalent to $\tilde{\lambda}\pi\equiv 0 \pmod{\pi},$ i.e. $\tilde{\lambda}\in\mathbb{Z},$ which is a contradiction. 

Therefore, mapping \eqref{eqn : observation at l} is injective, if Lemma \ref{T4.1_lem} is true.
\end{proof}

It remains to prove Lemma \ref{T4.1_lem}.

\begin{proof}[Proof of Lemma \ref{T4.1_lem}]

Recall  the meromorphic extension $\mathcal{S}_{\phi,\lambda}$ in \eqref{3.1.1} of $S_{\phi,\lambda}$. We denote by $\mathcal{P}(\mathcal{S})$ the set of the poles of a meromorphic function $\mathcal{S}$ on $\mathbb{C}$ and also use the shorthand $\mathcal{P}_j := \mathcal{P} (\mathcal{S}^{(j)}_{\phi,\lambda})$ for simplicity. In particular, we see that  
 \begin{align}\label{trivial poles of P_j}  \left\{
 	\begin{aligned}
   \mathcal{P}_1 &= \left\{  - \pi + 2m\pi/\lambda : m \in \mathbb{Z}  \right\} \\
 \mathcal{P}_2 &= \left\{    \pi + 2m\pi/\lambda : m \in \mathbb{Z}  \right\}\\
  \mathcal{P}_3 &= \left\{   - \pi - 2\phi + 2m\pi/\lambda : m \in \mathbb{Z}  \right\}\\
   \mathcal{P}_4 &= \left\{     \pi - 2\phi + 2m\pi/\lambda : m \in \mathbb{Z}  \right\}
   \end{aligned}\right.
\end{align}
and
$\mathcal{P}(\mathcal{S}_{\phi,\lambda})$ is invariant under the translation by the minimal positive period $T_{\min}$ along the real axis,
\begin{equation}\label{eqn : translation invariance of poles}
	\mathcal{P}(\mathcal{S}_{\phi,\lambda})=T_{\min}+\mathcal{P}(\mathcal{S}_{\phi,\lambda}).
\end{equation} 

The proof is then structured by classifying cases based on the number and distribution of the poles of $\mathcal{S}_{\phi,\lambda}(z)$ in the interval of length $T=2\pi/\lambda$.  Aside from the translation invariance \eqref{eqn : translation invariance of poles}, the proof also utilizes the pigeonhole principle. Specifically, if $m$ objects are distributed among $k$ containers, then at least one container must contain no fewer than $\left\lceil \frac{m}{k} \right\rceil$ objects.

\subsection*{Case 1}	 We assume that $\{\mathcal{P}_k\}_{k=1, 2, 3, 4}$ are pairwisely disjoint. This also implies that $\mathcal{P}(\mathcal{S}_{\phi,\lambda})$  has precisely $4$ distinct elements  in $[0,2\pi/\lambda)$ by \eqref{eqn : translation invariance of poles}.

It is clear that $T=2\pi/\lambda$ is a period of $S_{\phi,\lambda}(\alpha)$. Since $\mathcal{P}(\mathcal{S}_{\phi,\lambda})\neq\emptyset$, $S_{\phi,\lambda}(\alpha)$ can not be a constant. Thus, $T=2\pi/\lambda$ is divisible by $T_{\min}$ and one can find some $N\in\mathbb{Z}_+$ such that $T_{\min}=2\pi/(N\lambda)$. It suffices to prove  $N = 1$. We assume $N>1$ and proceed by contradiction.

 \subsubsection*{Subcase 1.1}
If $N$ is odd, we divide $[0,2\pi/\lambda)$ into $N$ consecutive subintervals
\[
\left\{\left[(k - 1) \frac{2\pi}{N \lambda}, k  \frac{2\pi}{N \lambda}\right)   :    {k=1, \cdots, N}\right\}.
\]
By the pigeonhole principle, at least one of these subintervals contains $\left\lceil 4/N \right\rceil = \left\lfloor 4/N \right\rfloor + 1$ poles. By periodicity, this would force the entire interval $[0,2\pi/\lambda)$ to contain at least
$N + N \left\lfloor 4/N \right\rfloor > 4$
poles, which is impossible.

 \subsubsection*{Subcase 1.2}
For even $N$, we have that
\[
T_0:=\frac{\pi}{\lambda}=\frac{N}{2}\times\frac{2\pi}{N\lambda}
\]
is also a period of $S_{\phi,\lambda}(\alpha)$. However, we can show that this is not possible.

Otherwise, we first assert that there must exist $\{i, j\} \subset \{1, 2, 3, 4\}$ such that 
\begin{align}\label{eqn : P_i = P_j + T}\mathcal{P}_i+ T_0 = \mathcal{P}_j .\end{align}
 
\begin{proof}[Proof of \eqref{eqn : P_i = P_j + T}] For any $i = 1, 2, 3, 4$, one may take some $\alpha_i \in \mathcal{P}_i$, and rewrite
  $$\mathcal{P}_i = \{\alpha_i + 2 m \pi / \lambda : m \in \mathbb{Z}\}.$$ 
 If there is some $\beta \in \mathcal{P}_i \cap (\mathcal{P}_i + T_0)$, then it follows that there must exist some $m_1, m_2 \in \mathbb{Z}$ such that $$\beta = \alpha_i + 2 m_1 \pi / \lambda = \alpha_i + T_0 + 2 m_2 \pi / \lambda.$$ This implies
  $T_0 = 2 (m_1 - m_2) \pi / \lambda,$ which contradicts $T_0=\pi/\lambda$. Therefore, we have
	$$\mathcal{P}_i \cap (\mathcal{P}_i + T_0) = \emptyset.$$
Together with \eqref{eqn : translation invariance of poles}, this yields some $j \neq i$ and $\alpha_j \in \mathbb{R}$ such that 
$$\alpha_j \in (\mathcal{P}_i + T_0) \cap \mathcal{P}_j \neq\emptyset.$$ We thus can rewrite 
	\begin{align*}
	  \mathcal{P}_j &= \{\alpha_j + 2 m \pi / \lambda : m \in \mathbb{Z}\},
	\\
	 \mathcal{P}_i &= \{\alpha_j - T_0 + 2 m \pi / \lambda : m \in \mathbb{Z}\}.
	\end{align*}
	This shows \eqref{eqn : P_i = P_j + T}.
\end{proof}

\begin{remark}
Since $T_0 =  \pi/\lambda$,  \eqref{eqn : P_i = P_j + T} is actually equivalent to $$\mathcal{P}_i = \mathcal{P}_j+ T_0.$$ 
\end{remark}

 Next, we examine all scenarios within Subcase 1.2, and apply  \eqref{eqn : P_i = P_j + T} to derive a contradiction for every possible configuration.

\subsubsection*{Subsubcase 1.2.1} When $\mathcal{P}_1 = \mathcal{P}_2 + T_{0}$ or $\mathcal{P}_3 = \mathcal{P}_4 + T_{0}$, we use \eqref{trivial poles of P_j}
	to obtain  $$-\pi \equiv \pi + T_{0} \pmod{ 2\pi / \lambda },$$ and thus
	\begin{equation*}\label{case.1.2.1}
		\lambda\equiv\frac{1}{2}\pmod{1}; \quad \mbox{equivalent,  $\lambda = {(2M+1)}/{2}$ for some $M \in \mathbb{Z}_+$. }
	\end{equation*}
  Plugging it into \eqref{4.1} gives
		\begin{eqnarray}\label{4.2}
			S_{\phi,\lambda}(\alpha)
			 =  2\lambda\left[\frac{1}{\sin((\pi+\alpha)\lambda)}+\frac{1}{\sin((\pi+\alpha+2\phi)\lambda)}\right].
	\end{eqnarray} 
    This implies that $$S_{\phi,\lambda}(\alpha) = - S_{\phi,\lambda}(\alpha+\pi/\lambda).$$ Since $S_{\phi,\lambda}$ is $(\pi/\lambda)$-periodic, we also have that $$S_{\phi,\lambda}(\alpha) = S_{\phi,\lambda}(\alpha+\pi/\lambda).$$ Hence, $$S_{\phi,\lambda}(\alpha)\equiv 0, \quad \mbox{ $\forall \alpha\in \mathbb{R}$.}$$ 
    Combining this with \eqref{4.2} yields
    \[-\sin\left((\pi+\alpha)\lambda\right)=\sin\left((\pi+\alpha+2\phi)\lambda\right),\quad \mbox{$\forall \alpha \in \mathbb{R}$.} \]
    This amounts to 
    \[2\phi\lambda \equiv \pi   \pmod{2\pi}  \Longleftrightarrow  \phi \equiv \pi/(2\lambda) \pmod{\pi/\lambda},\]
    which contradicts the assumption $\phi\neq\pi/(2\lambda)$.

\subsubsection*{Subsubcase 1.2.2}    
When $\mathcal{P}_1 = \mathcal{P}_3 + T_{0}$ or $\mathcal{P}_2 = \mathcal{P}_4 + T_{0}$,  we likewise use \eqref{trivial poles of P_j} to obtain 
	\begin{equation}\label{case.1.2.2}
		\phi\equiv\frac{\pi}{2\lambda}\pmod{\pi/\lambda},
	\end{equation}
	which contradicts the assumption $\phi\neq \pi/(2\lambda)$.

\subsubsection*{Subsubcase 1.2.3}   When $\mathcal{P}_1 = \mathcal{P}_4 + T_{0}$ or $\mathcal{P}_2 = \mathcal{P}_3 + T_{0}$, then \eqref{trivial poles of P_j} gives that
\begin{equation}\label{case.1.2.3}
	\phi\equiv\pm\pi+ \frac{\pi}{2\lambda}\pmod{\pi/\lambda},
\end{equation}
where $+$ and $-$ correspond to $\mathcal{P}_1 = \mathcal{P}_4 + T_{0}$ and $\mathcal{P}_2 = \mathcal{P}_3 + T_{0}$ respectively. Without loss of generality, we assume $\mathcal{P}_2 = \mathcal{P}_3 + T_{0}$. Using \eqref{case.1.2.3}, we reduce \eqref{4.1} to
\begin{equation*}
	S_{\phi,\lambda}(\alpha)=\lambda\bigg[\cot\left(\frac{\pi+\alpha}{2}\lambda\right)-\tan\left(\frac{3\pi-\alpha}{2}\lambda\right)+\frac{2}{\sin\left((\pi-\alpha)\lambda\right)}\bigg].
\end{equation*}
By  $S_{\phi,\lambda}(\alpha)=S_{\phi,\lambda}(\alpha+\pi/\lambda)$, we obtain
\begin{equation}\label{eqn: case.1.2.3}
	\frac{1}{\sin\left(\left(\alpha+\pi\right)\lambda\right)}-\frac{1}{\sin\left(\left(\alpha-\pi\right)\lambda\right)}=\frac{1}{\sin\left(\left(\alpha-\pi\right)\lambda\right)}-\frac{1}{\sin\left(\left(\alpha-3\pi\right)\lambda\right)}.
\end{equation}

Consider the \(2\pi/\lambda\)-periodic function
\begin{equation}\label{period function in case 1.2.3}
	F(\alpha):=\frac{1}{\sin\big(\alpha\lambda\big)}-\frac{1}{\sin\big((\alpha-2\pi)\lambda\big)}.
\end{equation}
By \eqref{eqn: case.1.2.3}, the function \(F(\alpha)\) is also \(2\pi\)-periodic. Observe that \(F(\alpha)\equiv C_0\) if and only if
\[
2\pi\lambda \equiv 2\pi \pmod{2\pi},
\]
and hence \(F(\alpha)\not\equiv C_0\) under the assumption \(\lambda\notin\mathbb{Z}\). It follows that $2\pi$ is divisible by $T_{\min}^F$, the minimal positive period of \(F(\alpha)\). Since \(2\pi/\lambda\) is a period of \(F(\alpha)\), $2\pi/\lambda T_{\min}^F\in\mathbb{Z}$ and hence $$\lambda=\frac{2\pi}{T_{\min}^F}\times \frac{\lambda T_{\min}^F}{2\pi}\in\mathbb{Q}.$$ 
We may write
\[
\lambda=\frac{p}{q}, \qquad \text{with } p,q\in\mathbb{Z}_+ \text{ and } (p,q)=1.
\]
Since \(2\pi\) and \(2\pi/\lambda\) are periods of \(F(\alpha)\) and \((p,q)=1\), it follows that \(2\pi/p\) is also a period of \(F(\alpha)\).

If \(q\) is even, then 
\[
\frac{\pi}{\lambda}=\frac{q}{2}\times\frac{2\pi}{p}
\]
is also a period of \(F(\alpha)\). Noting $F(\alpha) = -F\left(\alpha+\pi/\lambda\right)$, we conclude that \(F(\alpha)\equiv 0\). Substituting this into \eqref{period function in case 1.2.3} gives
\[
\sin\big(\alpha\lambda\big)=\sin\big((\alpha-2\pi)\lambda\big),
\]
which implies
\[
2\pi\lambda \equiv 2\pi \pmod{2\pi}.
\]
However, this is impossible under the standing assumption \(\lambda\notin\mathbb{Z}\).

If $q>1$ is odd, we partition $[0,2\pi/\lambda)$, as in Subcase 1.1, into 
\[
\left\{\left[(m - 1) \frac{2\pi}{q \lambda}, m  \frac{2\pi}{q \lambda}\right)   :    {m=1, \cdots, q}\right\}.
\]
Since \(\lambda\notin\frac{1}{2}\mathbb{Z}\), \(F(\alpha)\) has $4$ poles within one \(2\pi/\lambda\)-period. By the pigeonhole principle, there must be a subinterval containing $\left\lceil 4/N \right\rceil = \left\lfloor 4/N \right\rfloor + 1$ poles. The number of poles in $[0,2\pi/\lambda)$ is at least
$N + N \left\lfloor 4/N \right\rfloor>4$, which is impossible.

\subsection*{Case 2}    
There exist distinct $\mathcal{P}_i$ and $\mathcal{P}_j$ having a non-empty intersection.

First of all, we claim that
\begin{equation}\label{case.2}
	\phi \equiv \pm \pi   \pmod{\pi/\lambda}.
\end{equation}
\begin{proof}[Proof of \eqref{case.2}]
	 In this case, there exist $\{i, j\} \subset \{1, 2, 3, 4\}$ such that $\mathcal{P}_i = \mathcal{P}_j$. We next examine each possible scenario:
\begin{itemize}
	\item $\mathcal{P}_1=\mathcal{P}_2$ implies that $-\pi\equiv\pi \pmod{2\pi/\lambda}$, which contradicts $\lambda\notin\mathbb{Z}$ assumed in Theorem \ref{T4.1};
	\item $\mathcal{P}_1=\mathcal{P}_3$ implies that $-\pi\equiv-\pi-2\phi \pmod{2\pi/\lambda}$, i.e., $\lambda\phi/\pi\in\mathbb{Z}$, which contradicts $ 0<\phi<\pi/\lambda$ assumed in Theorem \ref{T4.1};
	\item $\mathcal{P}_1=\mathcal{P}_4$ implies that $-\pi\equiv\pi-2\phi \pmod{2\pi/\lambda}$, i.e., $\phi\equiv\pi \pmod{\pi/\lambda}$;
	\item $\mathcal{P}_2=\mathcal{P}_3$ implies that $\pi\equiv-\pi-2\phi \pmod{2\pi/\lambda}$, i.e., $\phi\equiv-\pi \pmod{\pi/\lambda}$;
	\item $\mathcal{P}_2=\mathcal{P}_4$ implies that $\pi\equiv\pi-2\phi \pmod{2\pi/\lambda}$, i.e., $\lambda\phi/\pi\in\mathbb{Z}$, which contradicts $ 0<\phi<\pi/\lambda$ assumed in Theorem \ref{T4.1};
	\item $\mathcal{P}_3=\mathcal{P}_4$ implies that $-\pi-2\phi\equiv\pi-2\phi \pmod{2\pi/\lambda}$, which contradicts $\lambda\notin\mathbb{Z}$ assumed in Theorem \ref{T4.1}.
\end{itemize}

\end{proof}

Without loss of generality, we restrict attention to the case
\begin{equation}\label{case.2. real}
	\phi \equiv \pi \pmod{\pi/\lambda};
\end{equation} 
the case $\phi \equiv -\pi \pmod{\pi/\lambda}$ is analogous. Substituting \eqref{case.2. real} into \eqref{4.1}, we have 
\begin{align}\label{3.1.2}
	S_{\phi,\lambda}(\alpha)&=\lambda\bigg[   \cot\left(\frac{\pi-\alpha}{2}\lambda\right)+\cot\left(\frac{3\pi+\alpha}{2}\lambda\right)\bigg]\nonumber\\
	&=\frac{2\lambda\sin\left(2\pi\lambda\right)}{\cos\left(\left(\pi+\alpha\right)\lambda\right)-\cos(2\pi\lambda)}.
\end{align}
Since $\lambda\notin \frac{1}{2}\mathbb{Z}$, the minimal positive period of $S_{\phi,\lambda}(\alpha)\not\equiv0$ in this case is $2\pi/\lambda$.

To sum up, the mimimal period of $S_{\phi,\lambda}(\alpha)$ is $2\pi/\lambda$.
\end{proof}

\begin{remark}\label{another discription of T}
Under the hypothesis $\beta\in (0,1]\setminus\left\{N^{-1}:N\in\mathbb{Z}_+\right\},$ \eqref{eq: another discription of T} in Theorem~\ref{1.1} is a consequence of \eqref{4.1}.
	
	First, \eqref{asy_0} does not equal to $0$ on
	\[
	\mathcal{D}_{t}^{O,-}:=\mathcal{D}_{t}^{O}\setminus \left(\mathcal{F}_t \cup \{(t-r,\alpha): S_{\phi,(2\beta)^{-1}}(\alpha)=0,\ 0<\alpha<2\pi\beta\}\right).
	\]
It then follows that
	\[
	\mathcal{D}_{t}^{O,-}\subset \mathrm{sing\,supp}\!\left(\frac{\sin \bigl(t\sqrt{\Delta_{\Omega}}\bigr)}{\sqrt{\Delta_{\Omega}}}\,\delta_O\right).
	\]
	
	Since $S_{\phi,(2\beta)^{-1}}(\alpha)$ is not identically zero provided $\beta\in (0,1]\setminus\left\{N^{-1}:N\in\mathbb{Z}_+\right\}$, its zero set is discrete. Since $\mathcal{F}_t$ is also a discrete set, we see that $\bar{\mathcal{D}}_{t}^{O,-}=\mathcal{D}_{t}^{O}$. As the singular support of a distribution is closed, there must hold that
	\begin{equation}\label{4.3}
		\mathcal{D}_{t}^{O} \subset \mathrm{sing\,supp}\!\left(\frac{\sin \bigl(t\sqrt{\Delta_{\Omega}}\bigr)}{\sqrt{\Delta_{\Omega}}}\,\delta_O\right).
	\end{equation}
	
	From \eqref{4.3} we deduce that
	\[
	r + r(x) \in \left\{ t : x \in \mathrm{sing\,supp}(u_\Omega(t)) \right\},
	\]
	where $u_\Omega$ is defined in \eqref{eq_1}, and $r$ and $r(x)$ denote the distance from $O$ to $P$ and $x$ to $P$ respectively. In view of \eqref{eqn: traval time}, we conclude \eqref{eq: another discription of T}.
\end{remark}
	
	\section{Recovering a corner}\label{P1.1}

Now, we are ready to prove the location and shape of a corner can be uniquely recovered from the measurement of diffracted waves in \eqref{eq_1}.

	\begin{proof}[Proof of Theorem \ref{1.1}]
		
		First, we reconstruct the location of $P$, $r$ and $r(x)$ for $x\in l$ using the travel time $t(x)$ of the diffracted wave. If there exists $P_{(1)}\neq P_{(2)}$ such that
		$$|O-P_{(1)}|+|P_{(1)}-x|=|O-P_{(2)}|+|P_{(2)}-x|.$$ 
		Then, we have 
		$$|P_{(1)}-x|-|P_{(2)}-x|=|O-P_{(2)}|-|O-P_{(1)}|=c_0,$$
		which contradicts the assumption that $l$ and $O$ do not lie on the two branches of a hyperbola, as stated in Theorem \ref{1.1}. Hence, $P\to \{t(x):x\in l\}$ is injective. Additionally, $r=|O-P|$ and $r(x)=|x-P|$ are uniquely determined.

		It remains to recover $\beta$ and $\phi$. Since $l$ is a curved line, $\pi_{[0,2\pi\beta]}(l)$ is an interval in $[0,2\pi\beta]$ as in Figure \ref{figure1}. 
		We take an open subset $l_0\subset l$ and $\pi_{[0,2\pi\beta]}(l_0)$ is an open interval in $[0,2\pi\beta]$.
		
		Find $\alpha_0\in (0,2\pi\beta)$ and $\epsilon>0$ such that $(\alpha_0-\epsilon,\alpha_0+\epsilon)\subset\pi_{[0,2\pi\beta]}(l_0)$. Since $r_{(1)}(x)=r_{(2)}(x)$ for $x\in l_0$ by $t_{(1)}(x)=t_{(2)}(x)$ for $x\in l_0$. Together with
		$$\left(u_{\mathcal{D},P,(1)}^{\mathrm{prin}}(t_{(1)}(x),x,O)\right)\bigg|_{x\in l}=\left(u_{\mathcal{D},P,(2)}^{\mathrm{prin}}(t_{(2)}(x),x,O)\right)\bigg|_{x\in l},$$ 
		we obtain
		\begin{equation}
			S_{\phi_{(1)},(2\beta_{(1)})^{-1}}(\alpha)=S_{\phi_{(2)},(2\beta_{(2)})^{-1}}(\alpha),\qquad\forall \alpha\in(\alpha_0-\epsilon,\alpha_0+\epsilon).
		\end{equation}
		Theorem \ref{T4.1} shows that $\beta_{(1)}=\beta_{(2)}$ and $\phi_{(1)}=\phi_{(2)}$. Therefore, the mapping \eqref{eqn : observation at l} is injective.
	\end{proof}

	\section{Detecting polygonal obstacles}\label{P1.2}
We are now ready to present the entire retrieval process in Theorem \ref{1.2} and verify its uniqueness. The overall detection procedure can be summarized as follows:
		\begin{enumerate}
			\item[1)] To eliminate the data pollution of reflected waves, we separate geometric and primary diffracted waves, by restricting to a suitable subsegment of $l$ with separated arrival times;
			\item[2)] We remove the measurements of secondary diffracted waves and extract primary diffracted waves, by comparing their magnitudes, as in Figure \ref{F_T_1};
			\item[3)] Through the following steps:
			\begin{enumerate}
				\item[3.1)] localization of the primary diffracted waves,
				\item[3.2)] detection of the principal terms of the localized waves,
			\end{enumerate}
			we identify the principal terms of primary diffracted waves $u_{\Omega}(t,x,O)|_{\mathcal{D}_{t,P_i}}$ passing through each $P_i$, $i=1,...,N$, which is given as in \eqref{asy_0};
			\item[4)] Applying Theorem \ref{1.1}, we then reconstruct, from remaining diffracted waves, the location and the angle of each corner in an orderly fashion.
		\end{enumerate}
		
		\begin{figure}
			\centering
			\includegraphics[width=\textwidth]{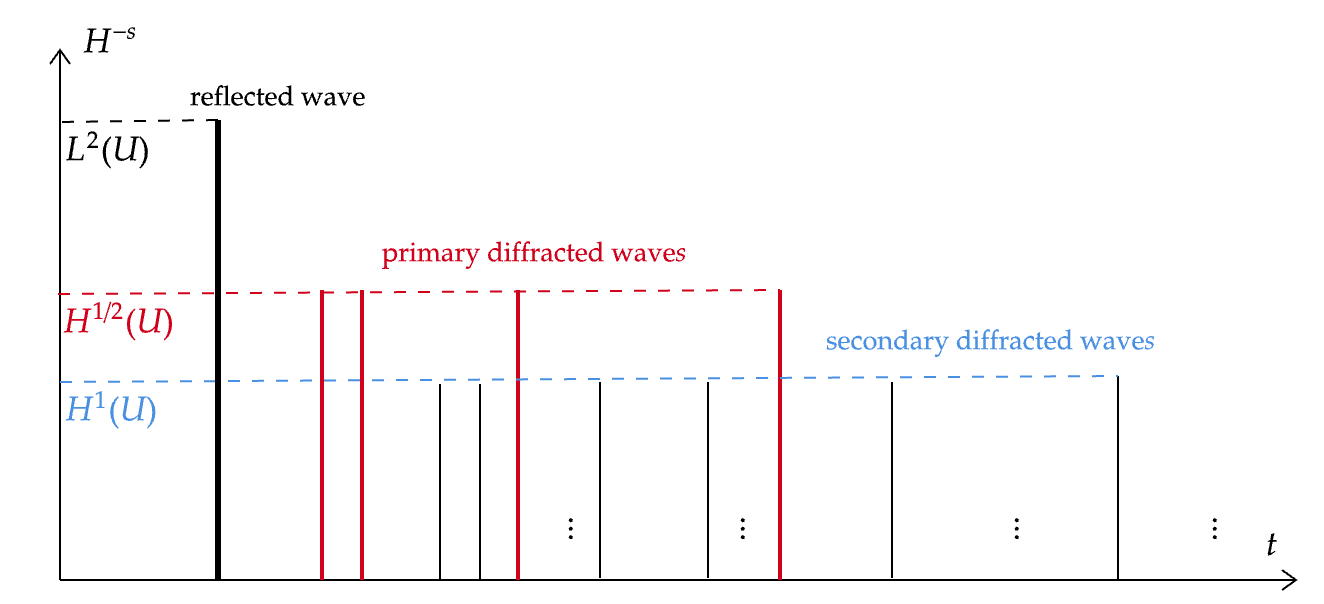}
			\caption{The Sobolev regularities of reflected and diffracted waves}
			\label{F_T_1}
		\end{figure}
		
		The strategy is to recover a polygon by applying the framework for corners, established in Theorem \ref{1.1}. We use additional shorthand notations throughout this section. Let $K$ be a convex polygon with $k$ vertices. Choose a vertex $P_0$ of $K$. Proceeding in the clockwise direction, we label the vertices by $P_i$ and the edges by $E_i$. We adopt the convention that $E_{-1} = E_{k}$. For a vertex $P_j$, let $\tilde{K}_{P_j}$ be the corner with boundary $\tilde{E}_{j-1}$, $\tilde{E}_{j+1}$, where $\tilde{E}_{j-1}$ and $\tilde{E}_{j+1}$ are the rays obtained by extending the two edges of the corner $P_j$. See e.g. Figure \ref{F1}. Then we denote the exterior domain $\tilde{\Omega}_{P_j}:=\mathbb{R}^2\setminus\tilde{K}_{P_j}$ and equip $\tilde{\Omega}_{P_j}$  with polar coordinates, taking point $P_j$ as the origin. We denote by $\{P_{v(j)}\}_{j=1,...,N}$ the collection of visible vertices of  $K$.
		
		\subsection*{Step 1) Reading diffracted waves at the receiver} Let $K$, $\Omega$, $O$ and $l$ be as in Theorem \ref{1.2}.		
It turns out that the geometry of the receiver $l$ is critical for measuring waves. Prior to initiating the recovery process, we first elaborate on the structure of $l$ and the corresponding wave data.

		\begin{lemma}\label{sep polygon} 	There exists a nonempty open curved line subsegment $l_0 \subset l$ such that the following holds.
			\begin{itemize}
				\item The primary diffracted waves arrive separately: for every $x\in \bar{l}_0$ and every pair $i\neq j$,
				\begin{equation}\label{eq : distinct arrival times}
				t_{P_{v(i)}}(x)\neq t_{P_{v(j)}}(x),\qquad
				\mbox{with $t_{P_{v(j)}}(x):=|P_{v(j)}- x| + |O - P_{v(j)}|$};
				\end{equation}
				\item The geometric waves arrive earlier than the primary diffracted waves: for every $x\in \bar{l}_0$ and $j=1,\dots,N$, 
				\begin{equation}\label{eq : geometric waves arrive early}
			\mbox{if $x\in \mathcal{G}_{t_0}^{O}$ for some $t_0>0$, then $t_0 < t_{P_{v(j)}}(x)$};\end{equation}
				\item For every $P_k\in \mathrm{Vert}(K)$, \begin{equation}\label{eqn : no touch with corner}\bar{l}_0\cap \partial \tilde{\Omega}_{P_k}=\emptyset.\end{equation}
			\end{itemize}
	
		\end{lemma}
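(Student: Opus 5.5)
The plan is to show that each of the three conditions fails only on a "small" subset of $l$, namely a finite union of hyperbolic arcs, straight lines, or isolated points. Then, since $l$ is not a piecewise composition of straight segments and hyperbolic arcs, the complement of the bad set in $l$ contains a nonempty open subarc. Shrinking that subarc so that its closure also avoids the bad set gives $l_0$. All three bad sets are closed, because they are defined by equalities of continuous functions, or by membership in closed curves. So it suffices to find one point $x_*\in l$ outside their union and take $l_0$ to be a small open arc around $x_*$.

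\textbf{Condition \eqref{eq : distinct arrival times}.} For $i\neq j$, the equality $t_{P_{v(i)}}(x)=t_{P_{v(j)}}(x)$ reads
\[
|x-P_{v(i)}|-|x-P_{v(j)}| = |O-P_{v(j)}|-|O-P_{v(i)}| =: c_{ij}.
\]
Since $P_{v(i)}\neq P_{v(j)}$, this locus is a branch of a hyperbola with foci $P_{v(i)},P_{v(j)}$. It degenerates to a line or ray when $c_{ij}=0$ or $|c_{ij}|=|P_{v(i)}-P_{v(j)}|$. The set is empty if $|c_{ij}|>|P_{v(i)}-P_{v(j)}|$, though the triangle inequality excludes this. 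There are finitely many pairs, so this bad set $B_1$ is a finite union of hyperbolic arcs and straight lines.

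\textbf{Condition \eqref{eqn : no touch with corner}.} Each $\partial\tilde\Omega_{P_k}$ is a union of two rays, obtained by extending the edges at $P_k$. So $B_3:=\bigcup_k\partial\tilde\Omega_{P_k}$ is a finite union of straight lines.

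\textbf{Condition \eqref{eq : geometric waves arrive early}.} This is where the geometry is delicate, and I expect it to be the main obstacle. The geometric front reaching $x$ consists of the direct wave, with arrival time $|x-O|$, and of waves reflected specularly from the edges. For the direct wave, the triangle inequality \eqref{eqn: traval time} already gives $|x-O|\le t_{P_{v(j)}}(x)$. Equality holds only when $x$ lies on the ray from $O$ through $P_{v(j)}$ beyond $P_{v(j)}$; this is a straight line and is added to the bad set.

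For a reflection off edge $E_m$, I will use the method of images: the reflected arrival time is $|x-O_m'|$, where $O_m'$ is the mirror image of $O$ across the line of $E_m$. This formula is valid only when the reflection point lies on the edge itself, a region bounded by rays from $O_m'$ through the endpoints of $E_m$. Because $K$ is convex, such a reflected ray never meets another edge, so there are no multiple reflections to track. For $x$ in that region, the segment from $O_m'$ to $x$ crosses $E_m$ at some point $y$. By the triangle inequality,
\[
|x-O_m'| = |O-y|+|y-x| \le \min_{P\in\{\text{endpoints of }E_m\}}\bigl(|O-P|+|P-x|\bigr),
\]
with equality only on the boundary rays from $O_m'$ through the endpoints. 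Hence the reflected wave arrives no later than the diffracted wave from the endpoints of $E_m$. For the remaining vertices, the reflected time is also at most the relevant diffraction time except on degenerate loci. I need this for every visible vertex, not just the endpoints of $E_m$, and I would try to reduce it to the endpoint case using convexity of $K$.

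The paper's convention is that the diffracted front meets the geometric front only on finitely many straight lines through the tips, as in Lemma \ref{intersection} and the remark after Theorem \ref{1.2}. So I define $B_2$ as this finite union of lines through the tips, together with the shadow and reflection boundaries, which are also rays through vertices. The no-grazing condition \eqref{no-grazing condition} guarantees that these boundaries are transversal rays rather than tangent ones, so every piece of $B_2$ is a genuine straight line. Off $B_2$ the inequality is strict; if equality occurs somewhere, I intend to show it forces $x$ onto one of these lines.

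\textbf{Conclusion.} $B_1\cup B_2\cup B_3$ is closed and is a finite union of straight lines and hyperbolic branches. By hypothesis, $l$ is not contained piecewise in such curves. Therefore $l\setminus(B_1\cup B_2\cup B_3)$ is a nonempty relatively open subset of $l$. It contains an open subarc, and shrinking that subarc slightly gives $l_0$ with $\bar l_0$ disjoint from all the bad sets.
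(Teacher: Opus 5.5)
This is essentially the paper's proof. The paper likewise deletes from $l$ the closed set $K_e$, formed by the hyperbolic loci $\gamma_{i,j}$, the rays $\mathcal{F}_j$ on which $\bar{\mathcal{G}}^O_t$ meets $\mathcal{D}_{t,P_{v(j)}}$, and the extended edges $\partial\tilde\Omega_{P_k}$, and then uses the hypothesis on $l$ and closedness to obtain $l_0$. For the second bullet the paper only cites Lemma~\ref{intersection}, a single-corner statement, so your direct comparison of direct and reflected arrival times is more careful than the paper's.

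The one step you left open is reflection off $E_m$ versus diffraction at a vertex $P\notin E_m$, and it needs no reduction to the endpoint case. $O$ and $x$ lie in the open half-plane bounded by the line $L_m\supset E_m$ on the side away from $K$, while by convexity $P$ lies strictly on the other side. Hence the path $O\to P\to x$ crosses $L_m$, and $|O-P|+|P-x|>\min_{z\in L_m}\bigl(|O-z|+|z-x|\bigr)=|O_m'-x|$.
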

		
		\begin{proof}
			For any $i\neq j$, the locus
			\[
			\gamma_{i,j}:=\Bigl\{ x\in \mathbb{R}^2 : |O-P_{v(i)}| + |P_{v(i)} - x| = |O-P_{v(j)}| + |P_{v(j)} - x| \Bigr\},
			\]
			 is either a (branch of a) hyperbola or a straight line.
			
			For each $j$, we write
			\[
			\mathcal{F}_{j}:=\bigcup_{t>|O-P_{v(j)}|} \bigl(\bar{\mathcal{G}}_{t}^{O}\cap \mathcal{D}_{t,P_{v(j)}}\bigr),
		\]   where \[ 
			\mathcal{D}_{t,P_{v(j)}}:=\{ x\in \Omega : |x-P_{v(j)}| = t - |O-P_{v(j)}| \}.
			\]
			By Lemma \ref{intersection}, each $\mathcal{F}_i$ is the union of at most two rays issuing from $P_{v(i)}$.
			
		Now we consider the	set
			\[
			K_e:= K \;\cup\; \left(\bigcup_{j=1}^N \mathcal{F}_j \right) \;\cup\; \left(\bigcup_{i\neq j} \gamma_{i,j}\right) \;\cup\; \left(\bigcup_{P_k\in \mathrm{Vert}(K)} \partial \tilde{\Omega}_{P_k}\right).
			\]
			This is a closed subset of $\mathbb{R}^2$, being a finite union of (segments of) lines, hyperbolae, and the polygon $K$. The assumption on $l$ in Theorem \ref{1.2} guarantees that $l$ can not be wholly contained in $K_e$, so that
			\[
			l_{\mathrm{sep}}:= l\setminus K_e \neq \emptyset.
			\]
  For any $x\in l_{\mathrm{sep}}$, we have:
			\begin{itemize}
				\item  that $x\notin \gamma_{i,j}$ means that $t_{P_{v(i)}}(x)\neq t_{P_{v(j)}}(x)$ for $i\neq j$, yielding \eqref{eq : distinct arrival times} at $x$;
				\item  that $x\notin \mathcal{F}_j$ implies that if $x\in \mathcal{G}_{t}^{O}$ for some $t > 0$ then necessarily $t < t_{P_{v(j)}}(x)$, which is exactly  \eqref{eq : geometric waves arrive early} at $x$;
				\item  that $x\notin \partial \tilde{\Omega}_{P_k}$ for all $k$ is precisely \eqref{eqn : no touch with corner} at $x$.
			\end{itemize}
			Because $K_e$ is closed, there exists an open curved line subsegment $l_0\subset l$ containing $x_0$ and disjoint from $K_e$; the three properties persist on $l_0$. This proves the existence of $l_0$ with properties \eqref{eq : distinct arrival times}–\eqref{eqn : no touch with corner}.
		\end{proof}

	Consequently, we have
	\begin{lemma}\label{lemma : nbhd of l0}
		 		There exists an open subset $U\supset \bar{l}_0$ in $\Omega$ such that, for any $\{i,j\}\subset\{1,\dots,N\}$ and $t>0$ the following holds.
		 \begin{itemize}\label{restriction of observation}
		 	\item The primary diffracted waves from distinct vertices are disjoint on $\bar{U}$:
		 	\begin{equation}\label{The primary diffracted waves have empty intersection}
		 		\mathcal{D}_{t,P_{v(i)}}\cap \mathcal{D}_{t,P_{v(j)}}\cap \bar{U}=\emptyset;
		 	\end{equation}
		 	\item The primary diffracted waves and the geometric wave are disjoint on $\bar{U}$:
		 	\begin{equation}\label{The primary diffracted waves and geometric waves have empty intersection}
		 		\bar{U}\cap \mathcal{D}_{t,P_{v(j)}}\cap \bar{\mathcal{G}}_{t}^{O} =\emptyset;
		 	\end{equation}
		 	\item No grazing rays intersect $\bar{U}$:
		 	\begin{equation}\label{The grazing ray occured by primary diffracted waves do not pass}
		 		\bar{U}\cap \bigcup_{P_k\in \mathrm{Vert}(K)} \partial \tilde{\Omega}_{P_k}=\emptyset.
		 	\end{equation}
		 \end{itemize}

	\end{lemma}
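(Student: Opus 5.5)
The idea is to turn Lemma~\ref{sep polygon}, which holds pointwise on $\bar{l}_0$, into an open-set statement by compactness. First we shrink $l_0$ if necessary so that $\bar l_0$ is a compact arc on which the three properties of Lemma~\ref{sep polygon} still hold. Next we encode each property as a closed set that $\bar l_0$ avoids:
\[
A_{ij}:=\bigcup_{t>0}\bigl(\mathcal{D}_{t,P_{v(i)}}\cap \mathcal{D}_{t,P_{v(j)}}\bigr)\subset\gamma_{i,j},\qquad
B_j:=\bigcup_{t>0}\bigl(\mathcal{D}_{t,P_{v(j)}}\cap\bar{\mathcal{G}}^O_t\bigr),\qquad
C:=\bigcup_{k}\partial\tilde\Omega_{P_k}.
\]
A point $x$ lies in $\mathcal{D}_{t,P_{v(i)}}\cap\mathcal{D}_{t,P_{v(j)}}$ for some $t$ exactly when $t_{P_{v(i)}}(x)=t_{P_{v(j)}}(x)$. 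So $A_{ij}=\gamma_{i,j}\cap\Omega$, and $\gamma_{i,j}$ is closed as the zero set of a continuous function. The set $C$ is a finite union of rays, hence closed.

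The only delicate set is $B_j$. A point $x$ belongs to $B_j$ iff $x\in\bar{\mathcal G}^O_{t}$ with $t=t_{P_{v(j)}}(x)$. By Lemma~\ref{intersection}, applied on the cone $C(I_{4\pi\beta_j})$ obtained from the corner $\tilde K_{P_{v(j)}}$ via the method of images, this set is contained in $\mathcal{F}_j$. In a neighbourhood of $\bar l_0$ we would like to say that the geometric front near $x$ is determined by finitely many incident and reflected rays, each of which reflects off finitely many edges. Then the condition ``$x\in\bar{\mathcal G}^O_{t_{P_{v(j)}}(x)}$'' is a closed condition, namely a finite union of closed sets of the form $\{x: d_m(x)=t_{P_{v(j)}}(x)\}$. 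Here $d_m$ are the continuous unfolded distances from the images of $O$, restricted to closed angular sectors.

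Given these closed sets, we set $E:=\bigcup_{i\neq j}\gamma_{i,j}\cup\bigcup_j B_j\cup C\cup K$. This is closed, and by Lemma~\ref{sep polygon} it is disjoint from the compact set $\bar l_0$. Hence $\delta:=\mathrm{dist}(\bar l_0,E)>0$. We take $U$ to be the open $\delta/2$-neighbourhood of $\bar l_0$. Then $\bar U$ lies at distance $\ge\delta/2$ from $E$, so $\bar U\subset\Omega$ and all three identities \eqref{The primary diffracted waves have empty intersection}–\eqref{The grazing ray occured by primary diffracted waves do not pass} hold for every $t>0$ at once. For the second identity, note that if $x\in\bar U\cap\mathcal{D}_{t,P_{v(j)}}\cap\bar{\mathcal G}^O_t$, then $t=t_{P_{v(j)}}(x)$, so $x\in B_j$, which is a contradiction.

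The main obstacle is proving that $B_j$ is closed, i.e.\ that $x\mapsto\bar{\mathcal G}^O_{t_{P_{v(j)}}(x)}$ behaves upper-semicontinuously. Because $K$ is a convex polygon and $O$ satisfies the no-grazing condition \eqref{no-grazing condition}, I would handle it as follows. Near any point of $\bar l_0$, the geometric front is the finite union of arcs of circles $|x-O_m|=t$, where the $O_m$ are reflections of $O$ across edges, each restricted to a closed illuminated sector. The closure of such a finite union is the union of the closed arcs. This gives closedness of $B_j$ directly, and the compactness argument then completes the proof.
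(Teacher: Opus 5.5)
Your proposal is correct and follows the paper's route. The paper's entire proof is that the lemma ``readily follows'' from Lemma~\ref{sep polygon} by taking $U$ to be a sufficiently small neighbourhood of $l_0$; this implicitly relies on exactly your compactness and distance argument applied to the closed set $K_e$, whose $\mathcal{F}_j$ coincides with your $B_j$. You additionally justify the closedness of $B_j$ directly, via image sources of $O$ across the edges, where the paper only cites Lemma~\ref{intersection}. One small point: $\bar l_0\cap B_j=\emptyset$ actually comes from the construction $l_0\cap K_e=\emptyset$ in the proof of Lemma~\ref{sep polygon}, not from its stated second bullet, which only concerns $\mathcal{G}^O_{t_0}$ and not its closure.
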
	

\begin{proof}
	This lemma readily follows from Lemma \ref{sep polygon} as long as $U$ is taken to be a sufficiently small open neighbourhood of $l_0$ in $\Omega$.
\end{proof}

		\subsection*{Step 2) Extracting primary diffracted waves}

		To separate primary diffracted waves from incident and reflected waves, it is more convenient to localize the full waves within the neighbourhood $U$ given in Lemma \ref{lemma : nbhd of l0}, than to restrict the waves simply at the receiver $l$.  
		In view of \eqref{The primary diffracted waves have empty intersection}-\eqref{The grazing ray occured by primary diffracted waves do not pass}, we may choose cut-off functions\[ \chi_{0}^T, \chi_{1}^T, \cdots, \chi_{N}^T \in C_c^{\infty}(\Omega) \quad\mbox{at a fixed time $T > 0$}\] such that  
		\begin{itemize}
			\item $\mathrm{supp} \chi_\alpha^T  \cap   \mathrm{supp} \chi_\beta^T  \neq \emptyset$ for any distinct $\alpha, \beta \in \{0, 1, \cdots, N\}$;
			\item  $\chi_0^T \equiv 1$ on $\mathcal{G}_{T}^{O}\cap\bar{U}$;
			\item  $\chi_j^T \equiv 1$   on $\mathcal{D}_{T, P_{v(j)}}\cap\bar{U}$ for   $j=1,\dots,N$. 
		\end{itemize}
		Since $\{\chi_{\alpha}^T\}^N_{\alpha = 0}$ separate the geometric front and the diffracted front within $\bar{U}$, we invoke them to localize the full waves as follows
		\begin{align}\label{primary diffracted wave}
			u_{\mathcal{G}}(T,x,O)&:= \chi_0^T \, u_{\Omega}(T,x,O),\nonumber \\
			u_{\mathcal{D},P_{v(j)}}(T,x,O)&:= \chi_j^T \, u_{\Omega}(T,x,O)   \quad j = 1,\dots,N.
		\end{align}
		In fact, these localized waves serve as an approximation of the full wave. We call $u_{\mathcal{D},P_{v(j)}}(T,x,O)$ the primary diffracted wave with respect to $P_{v(j)}$.
		
		Let $u,v\in\mathcal{D}'(\Omega)$ and $H(U)$ a subspace of $\mathcal{D}'(U)$. We write $u\equiv v\mod{H(U)}$ if $(u-v)|_U\in H(U)$.
		
		\begin{proposition}\label{T_1}
			Let $K$, $O$, $\Omega$, $l$, and $\{P_{v(j)}\}_{j=1,\dots,N}$ be as in Theorem \ref{1.2}. For $U\supset \bar{l}_0$ as given in Proposition \ref{lemma : nbhd of l0}, then $u_{\Omega}$ admits the decomposition
			\begin{equation}\label{asy_full_wave}
				u_{\Omega}(T,x,O) \equiv u_{\mathcal{G}}(T,x,O) + \sum_{j=1}^{N} u_{\mathcal{D},P_{v(j)}}(T,x,O) 
				\mod \bigcap_{\epsilon>0} H^{1-\epsilon}(U),
			\end{equation}
			and this decomposition is independent of the choice of the cut-off functions. Moreover,
			\begin{equation}\label{eqn : regularity of geometric waves}
				u_{\mathcal{G}}(T,x,O)\in \bigcap_{\epsilon>0} H^{-\epsilon}(U)\setminus L^{2}(U).
			\end{equation}
		\end{proposition}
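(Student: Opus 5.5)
We prove Proposition \ref{T_1} by localizing. Cover $\bar U$ by finitely many small open sets $V_m$ and decompose $u_\Omega(T,\cdot,O)$ on each one. By Lemma \ref{lemma : nbhd of l0}, each $V_m$ meets at most one of the fronts $\bar{\mathcal{G}}_T^O$, $\mathcal{D}_{T,P_{v(1)}},\dots,\mathcal{D}_{T,P_{v(N)}}$, and stays away from the extended edges $\partial\tilde\Omega_{P_k}$. On $V_m$ we identify $u_\Omega$ modulo $\bigcap_{\epsilon>0}H^{1-\epsilon}$ with the corresponding localized piece in \eqref{primary diffracted wave}. Summing with a partition of unity subordinate to $\{V_m\}$ then gives \eqref{asy_full_wave}.

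\textbf{Reduction to single corners.} For a fixed $V_m$ we use finite speed of propagation. Up to time $T$, only rays of length $\le T$ from $O$ influence $u_\Omega$ near $V_m$. We follow backward the broken bicharacteristics reaching $V_m$ at time $T$. By the no-grazing condition \eqref{no-grazing condition} and convexity of $K$, such a ray from $O$ either reflects transversally off edges, or hits a vertex $P_{v(j)}$ and diffracts, possibly more than once. Away from the vertices, the propagation law \eqref{conic propagation inner} of Theorem \ref{conic}, together with the smooth transversal reflection theory, shows that incident and reflected waves have the regularity of the free wave kernel. That regularity is $H^{-\epsilon}$ but not $L^2$, since $\sin(t\sqrt{\Delta})/\sqrt{\Delta}\,\delta_O$ in two dimensions has a front singularity behaving like $(t^2-|x|^2)_+^{-1/2}$. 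This gives \eqref{eqn : regularity of geometric waves} for $u_{\mathcal G}$, whose support lies near $\mathcal{G}_T^O$.

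Near each visible vertex $P_{v(j)}$, $\Omega$ coincides locally with $\tilde\Omega_{P_{v(j)}}$. Again by finite propagation speed, the wave emanating from $P_{v(j)}$ agrees near $\mathcal{D}_{T,P_{v(j)}}\cap V_m$ with the single-corner solution of Section \ref{sec3}. The one exception is the incoming wave, which was reflected off earlier edges; by the method of images it is a superposition of point sources at image points. Applying \eqref{principal diffracted}, each such primary diffracted wave is its principal part $u^{\mathrm{prin}}_{\mathcal D,P_{v(j)}}$, a jump of order $H^{1/2-\epsilon}$, plus a remainder in $\bigcap_\epsilon H^{1-\epsilon}$. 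Here Lemma \ref{intersection} and \eqref{The primary diffracted waves and geometric waves have empty intersection} keep $V_m$ away from $\mathcal F_T$.

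\textbf{Secondary diffractions.} Waves diffracted at $P_{v(i)}$ and then at a second vertex $P_k$ must be shown to lie in $\bigcap_\epsilon H^{1-\epsilon}(U)$. The plan is to feed the primary diffracted wave, which is conormal of order giving $H^{1/2-\epsilon}$ regularity, back into Proposition \ref{conormal diffracted wave} at $P_k$. That proposition gains one extra order of regularity relative to the incoming data along non-geometric continuations, because the kernel restricted to $\mathcal D$ is $\xi^{-1}$ compared with the geometric part. Condition \eqref{The grazing ray occured by primary diffracted waves do not pass} ensures that no geometric continuation of the diffracted wave along the extended edges (grazing) enters $U$. So only genuinely diffracted pieces arrive, and these are in $H^{3/2-\epsilon}\subset H^{1-\epsilon}$. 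Reflections of primary diffracted waves off other edges are excluded on $U$ by \eqref{eq : geometric waves arrive early} and the choice of $l_0$; otherwise we would include them by the same estimate.

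\textbf{Independence of the cutoffs.} If $\tilde\chi_j^T$ is another admissible choice, then $(\chi_j^T-\tilde\chi_j^T)u_\Omega$ is supported away from every front within $\bar U$. There $u_\Omega$ is smooth modulo the $H^{1-\epsilon}$ remainder, so the two decompositions agree modulo $\bigcap_\epsilon H^{1-\epsilon}(U)$.

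The main obstacle is the secondary-diffraction estimate, i.e.\ rigorously gaining regularity when a conormal diffracted wave, rather than a delta source, strikes a second corner. The first attempt would be to write the incoming primary wave as a superposition of point sources, integrate Proposition \ref{conormal diffracted wave} against this density, and track orders using \cite[Theorem 18.2.8]{Ho_PDO_3}. If that becomes messy, we would fall back on citing the Melrose--Wunsch diffractive improvement \cite{MW04}.
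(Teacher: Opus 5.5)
Your route is essentially the paper's. You localize near the fronts and vertices (via a partition of unity subordinate to Lemma \ref{lemma : nbhd of l0}) and identify each primary diffracted piece with the single-corner formula \eqref{principal diffracted}. You then push secondary and higher diffractions into $\bigcap_{\epsilon>0}H^{1-\epsilon}(U)$ using the diffractive improvement of \cite{CT82b,MW04}. Finally you read off \eqref{eqn : regularity of geometric waves} from the strength of the point source; your explicit $(t^2-|x|^2)_+^{-1/2}$ profile is a fine substitute for the paper's $\delta_O\in\bigcap_{\epsilon>0}H^{-1-\epsilon}\setminus H^{-1}$ plus propagation of Sobolev regularity. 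Two things in the secondary-diffraction step need fixing, however.

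\textbf{The order count.} In two dimensions the diffractive gain is $1/2$, not $1$.
\begin{itemize}
\item Written as a one-dimensional oscillatory integral across its front, the free kernel $\sin(t\sqrt{\Delta})/\sqrt{\Delta}$ has amplitude of order $|\xi|^{-1/2}$. That is exactly your $(t-|x|)_+^{-1/2}$ profile.
\item The diffracted amplitude in \eqref{principal symbol} is of order $|\xi|^{-1}$. You use this very $1/2$ gain yourself for the primary diffraction, from $H^{-\epsilon}$ to $H^{1/2-\epsilon}$.
\item A Heaviside-type jump arriving at $P_k$ therefore produces a secondary front with profile like $s_+^{1/2}$. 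This lies in $\bigcap_{\epsilon>0}H^{1-\epsilon}$ but not in $H^1$, let alone $H^{3/2-\epsilon}$.
\end{itemize}
The conclusion you need survives, but with no slack. That is precisely why the proposition is stated modulo $\bigcap_{\epsilon>0}H^{1-\epsilon}(U)$ and why the paper speaks of a $1/2$-order improvement.

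\textbf{What arrives at $P_k$.} You take for granted that what reaches the adjacent vertex $P_k$ along the edge is the conormal $H^{1/2-\epsilon}$ diffracted piece. That holds only if the edge avoids the set $\mathcal{F}$ of Lemma \ref{intersection} attached to $P_{v(j)}$. If an edge lay along $\mathcal{F}$ (exactly the grazing situation), the wave arriving at $P_k$ would carry geometric strength. Its diffraction at $P_k$ would then be as strong as a primary diffracted wave, and the quotient by $\bigcap_{\epsilon>0}H^{1-\epsilon}(U)$ could not remove it. This is where the paper really uses the no-grazing hypothesis \eqref{no-grazing condition}, together with \eqref{The primary diffracted waves and geometric waves have empty intersection}, to get $\partial K\cap\mathcal{F}_t^O=\emptyset$. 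In your outline no-grazing only serves transversality of reflections, so this step must be added.

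\textbf{Smaller points.}
\begin{itemize}
\item Say explicitly that each further diffraction gains another $1/2$, and that only finitely many occur before time $T$.
\item Reflected primary diffracted waves do not exist, by convexity. Rays leaving $P_{v(j)}$ into $\tilde\Omega_{P_{v(j)}}$ never meet $K\subset\tilde K_{P_{v(j)}}$ again, apart from the grazing rays along the two edges. So the appeal to \eqref{eq : geometric waves arrive early} there is misplaced.
\item Like the paper, your reduction tacitly assumes that every vertex seen from both $O$ and $l_0$ is one of the $P_{v(j)}$. A vertex visible from $O$ and from $l_0$ but not from all of $l$ would leave an unaccounted $H^{1/2-\epsilon}$ front in $U$. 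So $l_0$, or the index set, has to be chosen with this in mind.
\end{itemize}
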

		
		\begin{remark}
			Because each of the distributions $u_{\mathcal{D},P_{v(j)}}(t,x,O)$ and $u_{\mathcal{G}}(t,x,O)$, modulo $\bigcap_{\epsilon>0} H^{1-\epsilon}(U)$, is independent of the auxiliary cut-off functions, the decomposition \eqref{asy_full_wave} implies that the component
			$$u_{\mathcal{D},P_{v(j)}}(t,x,O)\mod\bigcap_{\epsilon>0} H^{1-\epsilon}(U),\qquad t>|O-P_{v(j)}|,$$
			is the primary diffracted wave generated by the vertex $P_{v(j)}$. 
		\end{remark}
		
		\begin{proof}[Proof of \eqref{asy_full_wave}] 
			By finite speed of propagation, we can localize $u_{\Omega}(t,x,O)$ in a neighbourhood of each vertex of $K$. Since $K$ is a convex polygon, we can use the method of images to remove the edges in that neighbourhood. Consequently, the initial boundary value problem is locally equivalent to an initial value problem on a conic manifold without boundary. In this geometry, \cite{CT82b,MW04} pointed out that $u_{\Omega}(t,x,O)|_{\mathcal{D}_t^{O}\setminus\mathcal{F}_t^{O}}$ gains a $1/2-$order Sobolev regularity. The no-grazing condition at $O$ \eqref{no-grazing condition} together with \eqref{The primary diffracted waves and geometric waves have empty intersection} guarantees that $\partial K \cap \mathcal{F}_t^{O}=\emptyset$. In a sufficiently small neighbourhood $U_{\partial K}$ of $\partial K$, $u_{\Omega}(t,x,O)|_{\mathcal{D}_t^{O}}$ lies in $\bigcap_{\epsilon>0} H^{1/2-\epsilon}(U_{\partial K})$.

			The diffracted waves may diffract at other vertices, causing multiple diffractions. We first discuss the secondary diffractions. Since $u_{\Omega}(t,x,O)$ diffracts at each visible vertex $P_{v(j)}$, the diffracted part of $u_{\Omega}(t,x,O)$ propagates towards $P_i$, where $i\in\{{v(j)}-1,{v(j)}+1\}$. Define
			\begin{align*}
				\mathcal{D}_{t,P_{v(j)},P_i}:=\{x\in\Omega:\ t=|O-P_{v(j)}|+|P_{v(j)}-P_i|+|x-P_i|\},
			\end{align*}
			and
			\[
			\mathcal{F}_{t,P_{v(j)},P_i}:=\mathcal{D}_{t,P_k,P_l}\cap \bar{\mathcal{D}}_{t,P_k},\qquad 
			\mathcal{F}_{P_{v(j)},P_i}:=\bigcup_{t>0}\mathcal{F}_{t,P_{v(j)},P_i}.
			\]
			We have $\mathcal{F}_{P_{v(j)},P_i}\subset \partial\tilde{\Omega}_{P_i}$ (see Figure \ref{Ft of secondary diffracted wave}). By \eqref{The grazing ray occured by primary diffracted waves do not pass}, $u_{\Omega}(t,x,O)|_{\mathcal{D}_{t,P_{v(j)},P_i}}$ in $U$ is $1/2$ order smoother than $u_{\Omega}(t,x,O)|_{\mathcal{D}_{t,P_{v(j)}}}$, i.e.
			\begin{equation}\label{eqn : regularity of diffracted wave}
				u_{\Omega}(t,x,O)|_{\mathcal{D}_{t,P_{v(j)},P_i}}\in \bigcap_{\epsilon>0} H^{1-\epsilon}(U),
			\end{equation}
			which means secondary diffracted waves are $1/2$-order smoother than primary diffracted waves. By energy estimates for the wave equation, multiple diffracted waves are never stronger than secondary diffracted waves, and also lie in space $\bigcap_{\epsilon>0} H^{1-\epsilon}(U).$

			Consequently,  we may consider the quotient mapping
			\begin{equation}\label{quotient mapping}
				[\cdot]_{H^{-0}/ H^{1-0}(U)}: \bigcap_{\varepsilon>0} H^{-\varepsilon}(U)\longrightarrow \bigcap_{\varepsilon>0} H^{-\varepsilon}(U)/\bigcap_{\varepsilon>0} H^{1-\varepsilon}(U).
			\end{equation}
			By propagation of singularities on conic manifolds (Theorem \ref{conic}), the remaining terms of $[u_{\Omega}(T,x,O)]_{H^{-0}/ H^{1-0}(U)}$ on $U$ arise from:
			\begin{enumerate}
				\item[(i)] the geometric (incident/reflected) waves supported on $\mathcal{G}_T^{O}$, and
				\item[(ii)] the family of primary diffracted waves supported on the sets $\mathcal{D}_{T,P_{v(j)}}$.
			\end{enumerate}
			Therefore, this establishes the decomposition \eqref{asy_full_wave} and shows that it is independent of the choice of cut-off functions.
			\end{proof}

		\begin{figure}[h]
			\centering
			\includegraphics[width=\textwidth]{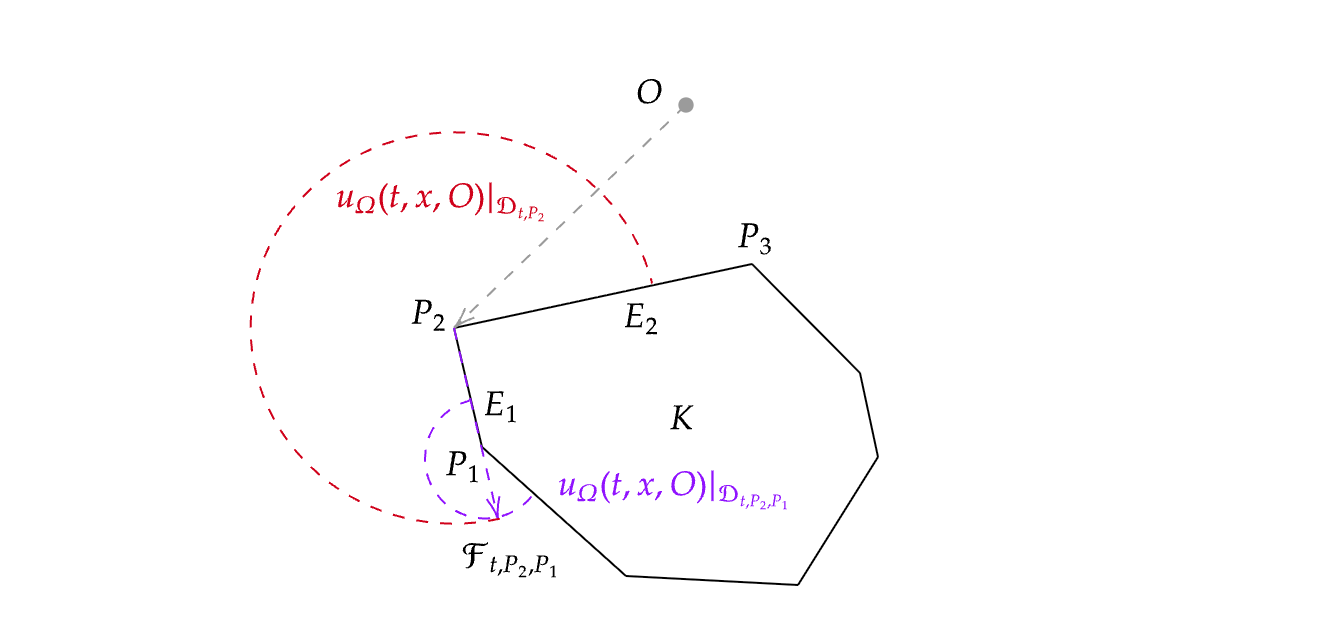}
			\caption{$\mathcal{F}_{t,P_2,P_1}$ of the secondary diffracted wave  $u_{\Omega}(t,x,O)|_{\mathcal{D}_{t,P_2,P_1}}$.}
			\label{Ft of secondary diffracted wave}
		\end{figure}

			\begin{proof}[Proof of \eqref{eqn : regularity of geometric waves}]

			  By Sobolev embedding theorem, $$\delta_O \in \bigcap_{\epsilon>0}H^{-1-\epsilon}(\Omega)\setminus H^{-1}(\Omega).$$

			  For an incident wave $u_{\mathcal{G}}^{\mathrm{inc}}$ travelling from $O$ to $x$, the classical propagation of Sobolev singularities along bicharacteristics (see \cite[Theorem 6.1.1']{Ho_FIO_2}) readily shows that  $$u_{\mathcal{G}}^{\mathrm{inc}}(T,x,O)\in \bigcap_{\epsilon>0}H^{-\epsilon}(U)\setminus L^2(U).$$
			
			For a reflected wave $u_{\mathcal{G}}^{\mathrm{ref}}$ hitting the boundary at $t_0$ in a point $x_0 \in \partial K$, we localize $u_{\mathcal{G}}^{\mathrm{ref}}$ in an open neighbourhood $U_{x_0}\subset \mathbb{R}^2$ near the reflection point $x_0$. Since $\partial K\cap U_{x_0}$ is a line segment, we apply the method of images to \eqref{eq_1} within $U_{x_0}$. Consequently, the local boundary initial value problem reduces to an initial value problem. The source for reflected waves lies in $\bigcap_{\epsilon>0}H^{-\epsilon}(U_{x_0})\setminus L^2(U_{x_0}).$ By propagation of Sobolev singularities, $u_{\mathcal{G}}^{\mathrm{ref}}$ at time $T$ still satisfies \eqref{eqn : regularity of geometric waves}, which completes the proof.
		\end{proof}

		\subsection*{Step 3) Identifying principal terms of diffracted waves}
		\begin{proposition}\label{T_2}
			Let $U$, $T$ and $u_{\mathcal{D},P_{v(j)}}(t,x,O)$ be as in Proposition \ref{T_1}, where $j\in\{1,\dots,N\}$. We have
			\begin{equation}\label{eqn : poly pricipal term}
				u_{\mathcal{D},P_{v(j)}}(T,x,O) \equiv u_{\mathcal{D},P_{v(j)}}^{\mathrm{prin}}(T,x,O)\mod{\bigcap_{\epsilon>0} H^{1-\epsilon}(U)},
			\end{equation}
			where $u_{\mathcal{D},P_{v(j)}}^{\mathrm{prin}}(t,x,O)$ is given by \eqref{asy_0} with $C(I)=\tilde{\Omega}_{P_{v(j)}}$.
		\end{proposition}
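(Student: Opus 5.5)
We will reduce Proposition \ref{T_2} to the single-corner computation of Section \ref{sec3} by localizing near the vertex $P_{v(j)}$ and using finite speed of propagation. The key observation is that $u_{\mathcal{D},P_{v(j)}}(T,x,O)=\chi_j^T u_\Omega(T,x,O)$ only sees the part of $u_\Omega$ whose singularities travel along the diffracted front $\mathcal{D}_{T,P_{v(j)}}\cap\bar U$. Modulo $\bigcap_{\epsilon>0}H^{1-\epsilon}(U)$, by Proposition \ref{T_1}, these singularities are generated by the incident pulse reaching $P_{v(j)}$ at time $r_j=|O-P_{v(j)}|$.

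First, we compare $u_\Omega$ with the solution $\tilde u_j$ of \eqref{eq_1} posed in the corner exterior $\tilde\Omega_{P_{v(j)}}$ with the same source $\delta_O$. Since $P_{v(j)}$ is visible and no grazing ray issues from $O$ by \eqref{no-grazing condition}, the segment from $O$ to $P_{v(j)}$ and the segments from $P_{v(j)}$ to points of $\bar l_0$ lie in $\Omega\cap\tilde\Omega_{P_{v(j)}}$. The difference $w=u_\Omega-\tilde u_j$ solves a wave equation whose discrepancy comes only from boundary pieces of $\partial K$ and of $\partial\tilde K_{P_{v(j)}}$ that are not shared (the extended rays $\tilde E_{j\pm1}$ beyond the adjacent vertices, and the other edges of $K$).

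We would then argue, using Theorem \ref{conic} near each other vertex (via the method of images, as in the proof of \eqref{asy_full_wave}) and the classical propagation of Sobolev singularities away from vertices, that in $\mathrm{WF}(w)$ over $U$ near the front $\mathcal{D}_{T,P_{v(j)}}$ only two kinds of contributions can appear. The first are reflected or geometric contributions, which live on $\mathcal{G}^O_T$ and are disjoint from $\mathrm{supp}\,\chi_j^T$ by \eqref{The primary diffracted waves and geometric waves have empty intersection}. The second are secondary and multiple diffractions, which lie in $\bigcap_{\epsilon>0}H^{1-\epsilon}(U)$ by \eqref{eqn : regularity of diffracted wave} together with \eqref{The grazing ray occured by primary diffracted waves do not pass}. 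Primary diffracted fronts from other vertices are also excluded from $\mathrm{supp}\,\chi_j^T$ by \eqref{The primary diffracted waves have empty intersection}. Hence $\chi_j^T u_\Omega\equiv \chi_j^T\tilde u_j \pmod{\bigcap_{\epsilon>0}H^{1-\epsilon}(U)}$.

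Second, on $\tilde\Omega_{P_{v(j)}}$ we apply the method of images to pass to the cone $C(I_{4\pi\beta_j})$. Then \eqref{principal diffracted}, which rests on Proposition \ref{conormal diffracted wave} and \eqref{regularity of remained term of wave kernel}, gives $\chi_j^T\tilde u_j\equiv u^{\mathrm{prin}}_{\mathcal{D},P_{v(j)}}$ modulo $\bigcap_{\epsilon>0}H^{1-\epsilon}(U)$. Here $\chi_j^T\equiv1$ near the front, and $U\cap\mathcal{F}^O_T=\emptyset$ by Lemma \ref{lemma : nbhd of l0}, since $\mathcal{F}_j\cap\bar U=\emptyset$. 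Away from the front, $\tilde u_j$ is smooth in Region III by Theorem \ref{conic}, so multiplying by the cutoff changes nothing modulo smooth functions.

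The main obstacle is the first step: justifying rigorously that the boundary pieces where $\Omega$ and $\tilde\Omega_{P_{v(j)}}$ differ inject only $H^{1-\epsilon}$-regular singularities onto $\mathcal{D}_{T,P_{v(j)}}\cap U$. One subtlety is the diffracted wave from $P_{v(j)}$ traveling along an edge to an adjacent vertex, where it strikes at grazing incidence. We would handle this by using \eqref{eqn : no touch with corner} to ensure that all such grazing rays and their continuations along $\partial\tilde\Omega_{P_k}$ miss $\bar U$. We would then invoke the Cheeger--Taylor/Melrose--Wunsch gain of $1/2$ derivative for diffracted (non-geometric) continuations, applying energy estimates to control the higher-order multiple diffractions.
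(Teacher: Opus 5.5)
Your proposal is correct in outline but takes a genuinely different route from the paper. It also leaves two steps in your Step 1 that you need to make explicit.

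\textbf{Comparison of routes.} The paper never compares full solutions up to time $T$. Its steps are:
\begin{enumerate}
\item It microlocalizes the source toward $P_{v(j)}$ with $\chi_{P_{v(j)}}(x,D)$.
\item It shows that the waves on $\Omega$ and on $\tilde\Omega_{P_{v(j)}}$ agree modulo $C^\infty$ only for $t<r_{v(j)}+L$, i.e.\ before the diffracted wave reaches an adjacent vertex. This is \eqref{C1.1}.
\item It reads off $u^{\mathrm{prin}}_{\mathcal{D},P_{v(j)}}(t_0)$ at a time $t_0$ just after $r_{v(j)}$.
\item It transports this to time $T$ by $W_\Omega(T-t_0)$ after a cutoff $\tilde\chi_{P_{v(j)}}$. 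It identifies the result with $u^{\mathrm{prin}}_{\mathcal{D},P_{v(j)}}(T)$ through an FIO parametrix and the transport equation for principal symbols, which is \eqref{C1.4}.
\end{enumerate}
You instead compare $u_\Omega$ with the full corner solution $\tilde u_j$ on all of $[0,T]$ and read the principal term at time $T$ directly from \eqref{principal diffracted}.

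What each route buys:
\begin{itemize}
\item Your route removes the symbol-transport step entirely, because the model corner is exactly solvable at every time.
\item The price is that you must control $w=u_\Omega-\tilde u_j$ globally in time. The paper's narrow beam and short comparison window avoid exactly this.
\end{itemize}

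\textbf{First point: the primary diffraction at $P_{v(j)}$ must be shown to cancel in $w$.} Your list of contributions to $\mathrm{WF}(w)$ near $\mathcal{D}_{T,P_{v(j)}}\cap\bar U$ omits this diffraction, and it is the crux of Step 1. The missing argument runs as follows.
\begin{itemize}
\item $w$ is driven only by the Neumann trace of $u_\Omega$ on $\tilde E_{j\pm1}$ beyond the adjacent vertices.
\item Every path from $O$ through those rays to $P_{v(j)}$ is uniformly longer than $r_j$. This uses that $O$ is not on the line of an edge.
\item So by finite speed, $w\equiv0$ near $P_{v(j)}$ for $t<r_j+\delta_0$, for some $\delta_0>0$.
\item A radial singularity of $w$ at $(T,x)$ with $x\in\mathcal{D}_{T,P_{v(j)}}\cap U$ would have to come back along $[P_{v(j)},x]\subset\tilde\Omega_{P_{v(j)}}$ to $P_{v(j)}$ at time $r_j$. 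That is impossible.
\item No geometric-strength singularity of $w$ returns to $P_{v(j)}$ later. Rays crossing or reflecting off $\tilde E_{j\pm1}$ beyond the adjacent vertices stay in an open half-plane not containing $P_{v(j)}$. Hence later diffractions at $P_{v(j)}$ really are of secondary strength.
\end{itemize}

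\textbf{Second point: the geometric singularities of $\tilde u_j$ do not all lie on $\mathcal{G}^O_T$.}
\begin{itemize}
\item The reflections off $\tilde E_{j\pm1}$ beyond the adjacent vertices do not exist in $\Omega$. So \eqref{The primary diffracted waves and geometric waves have empty intersection} says nothing about them, and they may meet $\mathrm{supp}\,\chi_j^T$.
\item Similarly, in your Step 2, smoothness in Region III only covers the inside of the diffracted circle.
\item Both gaps close by applying Lemma~\ref{intersection} to the corner. Its geometric front meets $\mathcal{D}_{T,P_{v(j)}}$ only on the shadow-boundary rays, which $\bar U$ avoids.
\item You can then shrink the cutoff, which is allowed by the cutoff-independence in Proposition~\ref{T_1}.
\end{itemize}
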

		
		\begin{proof}
		We prove \eqref{eqn : poly pricipal term} in the following steps.
			
			\subsubsection*{Step 3.1) Localization of the primary diffracted wave}
			First, we microlocalize the source such that the incident wave only hits the boundary $\partial K$ in a sufficiently small neighborhood of the vertex $P_{v(j)}$. To achieve this, we may choose a pseudo-differetial operator  $$\chi_{P_{v(j)}}(x,D)\in \Psi^0(\Omega)$$ satisfying:
			\begin{itemize}
				\item $\chi_{P_{v(j)}}(x,\xi)$ is homogeneous of degree $0$ and real-valued;
				\item $\chi_{P_{v(j)}}(x,\xi)=1$ near $(O,\xi_{P_{v(j)}}^O)$, where $\xi_{P_{v(j)}}^O\in T^*_O\mathbb{R}^2$ is the covector pointing from $O$ to $P_{v(j)}$;
				\item $\mathrm{WF}(\chi_{P_{v(j)}}(x,D))$ is contained in a small conic neighbourhood of $(O,\xi_{P_{v(j)}}^O)$.
			\end{itemize}
			Since $\delta_O \in \mathcal{E}'(\Omega)$, it follows that $\chi_{P_{v(j)}}(x,D)\delta_O \in \mathcal{E}'(\Omega)$. 
			
			Denote by $\Delta_\Omega$ and $\Delta_j$ the Neumann Laplacians on $\Omega$ and on $\tilde{\Omega}_{P_{v(j)}}$, respectively. Let $W_{\Omega}(s)$ and $W_{\tilde{\Omega}_{P_{v(j)}}}(s)$ be the wave kernels with respect to $\Delta_\Omega$ and $\Delta_j$, respectively. The waves with Cauchy data 
			\[
			\left(v(0,x),\,\partial_t v(0,x)\right)\in \mathcal{D}'(\Omega)\times \mathcal{D}'(\Omega),
			\]
			take the form:
			\begin{align}\label{solution operater}
				W_{\Omega}(s)v(0,x)
				&:=\left(\cos \left(s\sqrt{\Delta_\Omega}\right)\right)v(0,x)
				\;+\;
				\left(\frac{\sin \left(s\sqrt{\Delta_{\Omega}}\right)}{\sqrt{\Delta_{\Omega}}}\right)\,
				\partial_t v(0,x),\\
				W_{\tilde{\Omega}_{P_{v(j)}}}(s)v(0,x)
				&:=\left(\cos \left(s\sqrt{\Delta_j}\right)\right)v(0,x)
				\;+\;
				\left(\frac{\sin \left(s\sqrt{\Delta_{j}}\right)}{\sqrt{\Delta_{j}}}\right)\,
				\partial_t v(0,x).
			\end{align}
			
			Consider the microlocalized wave
			\begin{equation}\label{cut-off wave on Omega}
				u_{\Omega,\chi_{P_{v(j)}}}(t,x,O):=W_{\Omega}(s)\left(\chi_{P_{v(j)}}(x,D)\delta_O\right).
			\end{equation}
			Figure \ref{F1} illustrates the microlocalization of the incident wave.
			\begin{figure}[h]
				\centering
				\includegraphics[width=\textwidth]{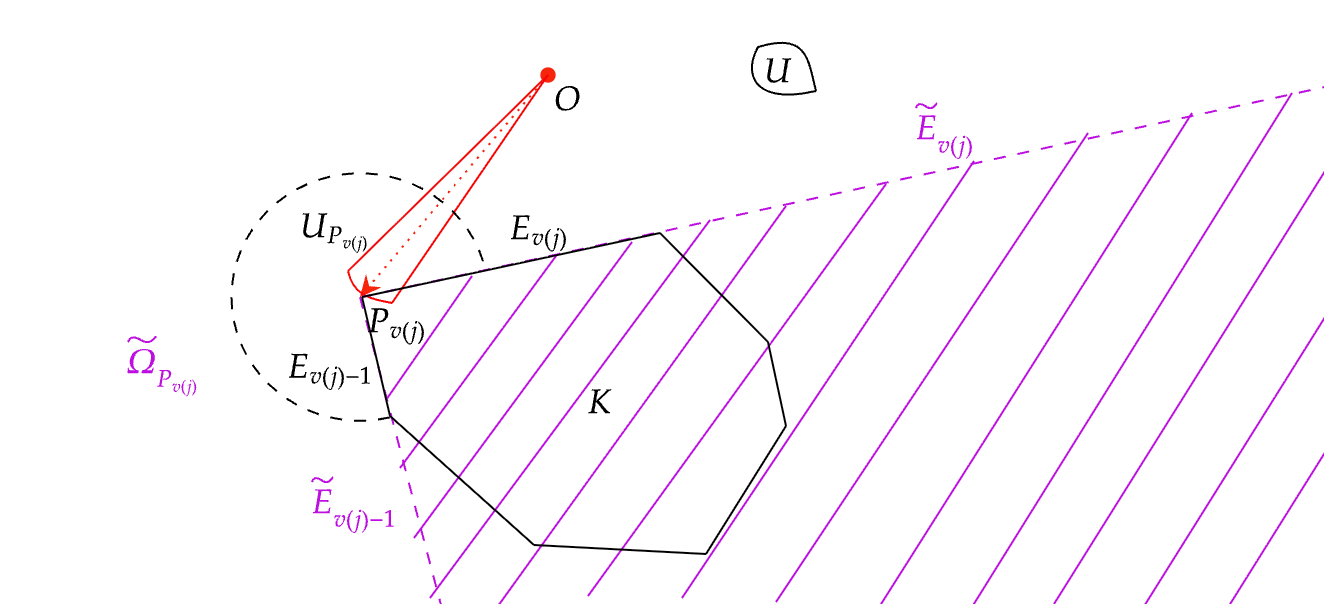}
				\caption{Microlocalization of the incident wave.}
				\label{F1}
			\end{figure}

			By the propagation of singularities near the vertices \eqref{conic propagation conical} in Theorem \ref{conic}, there exists $\epsilon>0$ such that
			\[
			u_{\Omega}(t,x,O) - u_{\Omega,\chi_{P_{v(j)}}}(t,x,O)\in \mathcal{D}^\infty_{\mathrm{loc}}(U_{P_{v(j)}}),
			\]
			for $t\in (r_{v(j)},r_{v(j)}+\epsilon]$. Since $\mathrm{sing\, supp}(u_{\mathcal{G}}(t))\subset \mathcal{G}_t^{O}$ and $\mathrm{supp}\chi_{j}^T\cap\mathcal{G}_t^{O}=\emptyset$,  \eqref{asy_full_wave} implies
			\begin{equation}\label{C1.0}
				u_{\mathcal{D},P_{v(j)}}(T,x,O) \equiv \chi_{j}^Tu_{\Omega,\chi_{P_{v(j)}}}(T,x,O)\pmod{C^\infty(U)}.
			\end{equation}
			
			Wave \eqref{cut-off wave on Omega} in $\Omega$ emanates from $O$ along the directions in a neighbourhood of $\xi_{P_{v(j)}}$. The wave front of \eqref{cut-off wave on Omega} first hits $P_{v(j)}$ at time $r_{v(j)}$. Then, the diffraction occurs. The diffracted wave propagates towards $P_{v(j)-1}$ and $P_{v(j)+1}$ causing secondary diffracted waves at time $r_{v(j)}+|E_{v(j)-1}|$ and $r_{v(j)}+|E_{v(j)}|$, respectively.
			
			We consider the microlocalized wave 
			\begin{equation}\label{cut-off wave on model cone}
				u_{\tilde{\Omega}_{P_{v(j)}},\chi}(t,x,O):=W_{\tilde{\Omega}_{P_{v(j)}}}(s)\left(\chi_{P_{v(j)}}(x,D)\delta_O\right)
			\end{equation}
			on $\tilde{\Omega}_{P_{v(j)}}$. Wave \eqref{cut-off wave on model cone} also propagates from $O$ along the directions around $\xi_{P_{v(j)}}$ and strikes $P_{v(j)}$ at time $r_{v(j)}$. However, no secondary diffraction takes place afterwards, since there is only one vertex in $\tilde{\Omega}_{P_{v(j)}}$. 
			
			We claim that \eqref{cut-off wave on Omega} coincides with \eqref{cut-off wave on model cone}, up to a smooth function, before the secondary diffraction. Namely, for $t\in [0, r_{v(j)}+L)$,
				\begin{equation}\label{C1.1}
					u_{\Omega,\chi_{P_{v(j)}}}(t,x,O)\equiv u_{\tilde{\Omega}_{P_{v(j)}},\chi}(t,x,O)\pmod{C^\infty(\tilde{\Omega}_{P_{v(j)}})}.
				\end{equation}
				where $L:=\min\{|E_{{v(j)}-1}|,|E_{{v(j)}}|\}$. 
			
			\begin{proof}[Proof of \eqref{C1.1}]
				Since $\chi_{P_{v(j)}}(x,D)\delta_O$ is compactly supported in $\Omega$, by finite speed of propagation, the supports of \eqref{cut-off wave on Omega} and \eqref{cut-off wave on model cone} agree before the waves reach $\partial K$. Hence, there exists $\epsilon_1>0$ such that \eqref{C1.1} holds for $t\in [0,\epsilon_1)$. Let
				\[
				T_*:=\sup\{\,T\in [0,r_{v(j)}+L] : \eqref{C1.1} \text{ holds for }t\in [0,T)\,\}.
				\]
				We prove $T_*= r_{v(j)}+L$ by a contradiction argument. Assume  $T_*< r_{v(j)}+L$ and then there exists $\epsilon>0$ so that for all $$t\in (T_*-\epsilon,T_*+\epsilon)\subset[0,r_{v(j)}+L].$$
				
				Denote the singular support of \eqref{cut-off wave on Omega} 
				\[
				V_{\mathrm{sing}}^t:=\mathrm{sing\,supp}\left(u_{\Omega,\chi_{P_{v(j)}}}(t,\cdot,O)\right),\quad\mbox{for $t\in (T_*-\epsilon,T_*+\epsilon)$}.
				\] 
				Choose an open set $V_{\max}\subset \bar{\Omega}$ such that
				\begin{itemize}
					\item $V_{\mathrm{sing}}^{t}\subset V_{\max}\subset \bar{\tilde{\Omega}}_{P_{v(j)}}$;
					\item $\{x\in\Omega:d(x,V_{\max})\leq s_0\}\subset \bar{\tilde{\Omega}}_{P_{v(j)}}$ for some $s_0>0$;
					\item $\mathrm{Vert}(K)\cap V_{\max}=\{P_j\}$.
				\end{itemize}
				Then, we pick $\chi\in C_c^\infty(\Omega)$ such that
				\begin{itemize}
					\item $\mathrm{supp}(\chi)\subset V_{\max}$;
					\item $\chi\equiv1$ on $V_{\mathrm{sing}}^t$.
				\end{itemize}
				
				Since \eqref{C1.1} holds on $[0,T_*)$, the two waves agree modulo $C^\infty(\tilde{\Omega}_{P_{v(j)}})$ at  time $t_*\in (T_*-\epsilon/2,T_*)$. For any $s\in (0,s_0)$ and $x\in V_{\mathrm{sing}}^{t_*+s}$, by the finite speed of propagation, 
				\begin{align*}
					u_{\Omega,\chi_{P_{v(j)}}}(t_*+s,x,O)
					&= W_{\Omega}(s)\left(\chi u_{\Omega,\chi_{P_{v(j)}}}(t_*,\cdot,O)\right)(x)\\
					&= W_{\tilde{\Omega}_{P_{v(j)}}}(s)\left(\chi u_{\Omega,\chi_{P_{v(j)}}}(t_*,\cdot,O)\right)(x).
				\end{align*}
				Replacing $u_{\Omega,\chi_{P_{v(j)}}}$ by $u_{\tilde{\Omega}_{P_{v(j)}},\chi}$ on the right gives
				\begin{equation*}
					u_{\Omega,\chi_{P_{v(j)}}}(t_*+s,x,O) \equiv u_{\tilde{\Omega}_{P_{v(j)}},\chi}(t_*+s,x,O)\pmod{C^\infty(V_{\mathrm{sing}}^{t_*+s})}.
				\end{equation*}
				This proves \eqref{C1.1} holds for $t\in [0,T_*+s)$. This contradicts the definition of $T_*$. Therefore $T_*= r_{v(j)}+L$, proving \eqref{C1.1}.
			\end{proof}

			\subsubsection*{Step 3.2) Reading the principal terms of localized waves}

			An analogous arguement with \eqref{C1.0} yields that
			\begin{equation}\label{C1.2}
				u_{\tilde{\Omega}_{P_{v(j)}}}(t,x,O)-u_{\tilde{\Omega}_{P_{v(j)}},\chi}(t,x,O)
				\in \mathcal{D}^\infty_{\mathrm{loc}}(U_{P_{v(j)}}), \qquad
				t\in (r_{v(j)},\, r_{v(j)}+\epsilon_2].
			\end{equation}
		    By \eqref{principal diffracted}, for any open set $U_0$ with $\bar{U}_0\cap \mathcal{F}_t^{O}=\emptyset$,
			\begin{equation}\label{C1.3}
				\chi_{j}^Tu_{\tilde{\Omega}_{P_{v(j)}}}(t,x,O)-u_{\mathcal{D},P_{v(j)}}^{\mathrm{prin}}(t,x,O)\in \bigcap_{\epsilon>0}H^{1-\epsilon}(U_0),
			\end{equation}
			where $u^{\mathrm{prin}}_{\mathcal{D},P_{v(j)}}(t,x,O)$ is given in \eqref{asy_0}.
			
			Combining \eqref{C1.0}, \eqref{C1.1}, \eqref{C1.2}, and \eqref{C1.3}, we obtain that there exists $\epsilon_3>0$ such that for $t_0\in (r_{v(j)},r_{v(j)}+\epsilon_3)$, with $s:=T-t_0$,
			\begin{equation}\label{C1.renormalize}
				u_{\mathcal{D},P_j}(T,x,O)
				\equiv
				W_{\Omega}(s)\left(u_{\mathcal{D},P_{v(j)}}^{\mathrm{prin}}(t_0,\cdot,O)\right)(x)
				\mod{\bigcap_{\epsilon>0} H^{1-\epsilon}(U)}.
			\end{equation}
			
			The proof of \eqref{eqn : poly pricipal term} is concluded by \eqref{C1.renormalize} and the following identity
			\begin{equation}\label{C1.4}
				W_{\Omega}(s)\left(u_{\mathcal{D},P_{v(j)}}^{\mathrm{prin}}(t_0,\cdot,O)\right)(x)
				\equiv u_{\mathcal{D},P_{v(j)}}^{\mathrm{prin}}(T,x,O)
				\mod{\bigcap_{\epsilon>0} H^{1-\epsilon}(U)}.
			\end{equation}
		\end{proof}
		
			It remains to verify \eqref{C1.4}.

			\begin{proof}[Proof of \eqref{C1.4}]

				By the method of images, the initial boundary value problem \eqref{eq_1} for $\tilde{\Omega}_{P_{v(j)}}$ is locally equivalent to an initial value problem on conic manifold without boundary. The microlocal structure of the wave kernel \eqref{conormal order} in Proposition \ref{conormal diffracted wave} gives
				\begin{equation*}
					u_{\mathcal{D},P_{v(j)}}^{\mathrm{prin}}(t_0,\cdot,O)
					=\left(\frac{\sin \left(t_0\sqrt{\Delta_{j}}\right)}{\sqrt{\Delta_{j}}}\right)\left(\chi_{P_{v(j)}}(x,D)\delta_O\right)
					\in I^{-1}(\Omega,N^*(\mathcal{D}_{t_0,P_{v(j)}})).
				\end{equation*}
				
				To filter out the secondary and multiple diffracted wave, we introduce a cut-off function $\tilde{\chi}_{P_{v(j)}}(x)\in C^\infty_c(\tilde{\Omega}_{P_{v(j)}})$ satisfying 
				$$\tilde{\chi}_{P_{v(j)}}(x)=1\mbox{ near } \mathcal{D}_{t_0,P_{v(j)}}\cap\left\{x\in \tilde{\Omega}_{P_{v(j)}}: x-t\xi_{P_{v(j)}}^x\in U\mbox{ for some }t>0\right\}.$$ 
				The cut-off wave $\tilde{\chi}_{P_{v(j)}}u_{\mathcal{D},P_{v(j)}}^{\mathrm{prin}}(t_0,x,O)$ is compactly supported in $\tilde{\Omega}_{P_{v(j)}}$, and thus does not cause secondary diffraction at other vertices in $\Omega$. By \eqref{conic propagation inner} in Theorem \ref{conic}, together with \eqref{eqn : regularity of diffracted wave}, we obtain
				\begin{equation}\label{cut-off primary diffrated waves}
					W_{\Omega}(s)u_{\mathcal{D},P_{v(j)}}^{\mathrm{prin}}(t_0,x,O)\equiv 
					W_{\Omega}(s)\left(\tilde{\chi}_{P_{v(j)}}u_{\mathcal{D},P_{v(j)}}^{\mathrm{prin}}(t_0,\cdot,O)\right)(x)\mod{\bigcap_{\epsilon>0}H^{1-\epsilon}(U)}.
				\end{equation}
				
				Applying the standard FIO parametrix construction for hyperbolic equations (for example \cite[Theorem 5.1.2]{Du_FIO}) gives
				\[
				W_{\Omega}(s)\left(\tilde{\chi}_{P_{v(j)}}u_{\mathcal{D},P_{v(j)}}^{\mathrm{prin}}(t_0,\cdot,O)\right)(x)\in I^{-1}\left(U,N^*(U\cap\mathcal{D}_{s+t_0,P_{v(j)}})\right).
				\]
				Moreover, by \eqref{pricipal symbol of conomal}, its principal symbol lies in
				\[
				\sigma_{p}\left(W_{\Omega}(s)\left(\tilde{\chi}_{P_{v(j)}}u_{\mathcal{D},P_{v(j)}}^{\mathrm{prin}}(t_0)\right)\right)\in
				S^{-1}\left(N^*(U\cap \mathcal{D}_{s+t_0,P_{v(j)}})\right).
				\]
				By the transport equation for principal symbols in \cite[Theorem 5.1.2]{Du_FIO} and the principal amplitude of the wave kernel \eqref{principal symbol} in Proposition \ref{conormal diffracted wave}, for $x\in U$,
				\begin{align*}
					\lefteqn{\sigma_{p}\left(W_{\Omega}(s)\left(\tilde{\chi}_{P_{v(j)}}u_{\mathcal{D},P_{v(j)}}^{\mathrm{prin}}(t_0)\right)\right)(x,-\xi_{P_{v(j)}}^x)}\\
					&=\sigma_{p}\left(u_{\mathcal{D},P_{v(j)}}^{\mathrm{prin}}(t_0)\right)(x-s\xi_{P_{v(j)}}^x,-\xi_{P_{v(j)}}^x) \\
					&=\sigma_{p}\left(u_{\mathcal{D},P_{v(j)}}^{\mathrm{prin}}(s+t_0)\right)(x,-\xi_{P_{v(j)}}^x)\\
					&=\sigma_{p}\left(u_{\mathcal{D},P_{v(j)}}^{\mathrm{prin}}(T)\right)(x,-\xi_{P_{v(j)}}^x).
				\end{align*}
				By \eqref{leading order singularity} and \eqref{principal symbol}, the quantization of principal symbols gives
				\begin{multline*}
					W_{\Omega}(s)\left(\tilde{\chi}_{P_{v(j)}}u_{\mathcal{D},P_{v(j)}}^{\mathrm{prin}}(t_0,\cdot,O)\right)(x)- u_{\mathcal{D},P_{v(j)}}^{\mathrm{prin}}(T,x,O)\\
					\in I^{-3/2}\left(U,N^*(U\cap\mathcal{D}_{s+t_0,P_{v(j)}})\right).
				\end{multline*}
				In terms of Sobolev regularities of conormal distributions, this amounts to
					\begin{equation}\label{primary term of cut-off primary diffracted waves}
					W_{\Omega}(s)\left(\tilde{\chi}_{P_{v(j)}}u_{\mathcal{D},P_{v(j)}}^{\mathrm{prin}}(t_0,\cdot,O)\right)(x)- u_{\mathcal{D},P_{v(j)}}^{\mathrm{prin}}(T,x,O)\\
					\in \bigcap_{\epsilon>0}H^{1-\epsilon}(U).
				\end{equation}
				
				In summary, \eqref{cut-off primary diffrated waves} and \eqref{primary term of cut-off primary diffracted waves} prove \eqref{C1.4}. 
			\end{proof}

		\subsection*{Step 4) Retrieving the obstacle}
		Now, we are ready to complete the detection of the polygonal obstacle.
		\begin{proof}[Proof of Theorem \ref{1.2}]
			By Lemma \ref{sep polygon} and Lemma \ref{lemma : nbhd of l0}, choose curved line subsegment $l_0\subset l$ and $U\supset \bar{l}_0$ on which one can capture a primary diffracted front among all diffracted and geometric fronts. 
			
			In order to extract the primary diffracted waves in \eqref{primary diffracted wave}, we consider the quotient mapping \eqref{quotient mapping} the information of which is fully determined by \eqref{eqn : poly observation at l}. By \eqref{eqn : regularity of diffracted wave}, secondary and multiple diffractions are removed in the quotient space 
			$$\bigcap_{\epsilon>0} H^{-\epsilon}(U) /\bigcap_{\epsilon>0} H^{1-\epsilon}(U).$$
			In the meanwhile, by \eqref{eqn : regularity of geometric waves}, we can identify all reflected waves in the quotient space 
			$$\bigcap_{\epsilon>0} H^{-\epsilon}(U) /\bigcap_{\epsilon>0} H^{1/2-\epsilon}(U).$$
			By Proposition~\ref{T_2}, the primary diffracted waves in \eqref{primary diffracted wave} coincide with the principal diffracted terms \eqref{asy_0} modulo $\bigcap_{\epsilon>0} H^{1-\epsilon}(U)$.
			
			For each visible vertex $P_{v(j)}$, let $t_{P_{v(j)}}(x)=|O-P_{v(j)}|+|x-P_{v(j)}|$ be the arrival time of the primary diffracted wave with respect to $P_{v(j)}$ at $x$. If $t_{P_{v(j)},(1)}=t_{P_{v(j)},(2)}$ on $l_0$, we use Theorem \ref{1.1} to obtain $P_{{v(j)},(1)}=P_{{v(j)},(2)}$ and $r_{{v(j)},(1)}=r_{{v(j)},(2)}$.

			Let $\left\{u_{\Omega,(i)}\right\}_{i=1,2}$ be the equation of \eqref{eq_1} with respect to convex polygons $\{K_{(i)}\}_{i=1,2}.$ 
			For $x_0\in l$, we choose $\{U_{(i)}\}_{i=1,2}$ by Proposition \ref{lemma : nbhd of l0}. If for all $j\in \mathbb{Z}_+$,
			$$[u_{\Omega,(1)}(t_j(x_0),x,O)]_{x_0}=[u_{\Omega,(2)}(t_j(x_0),x,O)]_{x_0},$$ 
			then there exists an open set $U\ni x_0$ such that  $$u_{\Omega,(1)}(t_{P_{v(j)}}(x_0),x,O)=u_{\Omega,(2)}(t_{P_{v(j)}}(x_0),x,O),\quad\mbox{for $x\in U$ and $j=1,...,N$}.$$ 
			Denote $\tilde{U}=U_{(1)}\cap U_{(2)}\cap U$.  It follows that
			$$\left[u_{\Omega,(1)}(t_{P_{v(j)}}(x_0),x,O)\right]_{H^{-0}/ H^{1-0}(\tilde{U})}=\left[u_{\Omega,(2)}(t_{P_{v(j)}}(x_0),x,O)\right]_{H^{-0}/ H^{1-0}(\tilde{U})}.$$ 
			Consequently,  for each $j=1,\dots,N$ and $x_0\in l_0$,
			\begin{equation}\label{eqn : principal term equiv}
				u_{\mathcal{D},P_{v(j)},(1)}^{\mathrm{prin}}(t_{P_{v(j)}}(x_0),x,O)\equiv u_{\mathcal{D},P_{v(j)},(2)}^{\mathrm{prin}}(t_{P_{v(j)}}(x_0),x,O)\mod{ \bigcap_{\epsilon>0} H^{1-\epsilon}(\tilde{U})}.
			\end{equation}
			
			By \eqref{asy_0}, equality \eqref{eqn : principal term equiv} amounts to
			\begin{multline*}
				H(t-r-r(x))\bigg[\sin\left(\pi\sqrt{\Delta_{I_j^{(1)}}}\right)\delta_{\phi_{v(j),(1)}}+\sin\left(\pi\sqrt{\Delta_{I_j^{(1)}}}\right)\delta_{-\phi_{v(j),(1)}}\\
				-\sin\left(\pi\sqrt{\Delta_{I_j^{(2)}}}\right)\delta_{\phi_{v(j),(2)}}-\sin\left(\pi\sqrt{\Delta_{I_j^{(2)}}}\right)\delta_{-\phi_{v(j),(2)}}\bigg]\\ \equiv 0, \mod{ \bigcap_{\epsilon>0} H^{1-\epsilon}(\tilde{U})}.
			\end{multline*}
			Since $H(t-r-r(x))\in \bigcap_{\epsilon>0} H^{1/2-\epsilon}(\tilde{U})\setminus H^{1/2}(\tilde{U})$, we obtain that there exist $\alpha_0$ and $\epsilon>0$ such that
			\[
			S_{\phi_{v(j),(1)},(2\beta_{v(j),(1)})^{-1}}(\alpha)
			=
			S_{\phi_{v(j),(2)},(2\beta_{v(j),(2)})^{-1}}(\alpha),
			\qquad \forall \alpha\in (\alpha_0-\epsilon,\alpha_0+\epsilon).
			\]
			where $S_{\phi,\lambda}$ is given by \eqref{4.1}. Applying Theorem \ref{T4.1}, we conclude
			\begin{multline*}
				\bigl\{ (P_{{v(j)},(1)}, r_{{v(j)},(1)}, \phi_{v(j),(1)}, \beta_{v(j),(1)}) : j=1,\dots,N_{(1)} \bigr\}
				=\\
				\bigl\{ (P_{{v(j)},(2)}, r_{v(j),(2)}, \phi_{v(j),(2)}, \beta_{v(j),(2)}) : j=1,\dots,N_{(2)} \bigr\},
			\end{multline*}
			which completes the proof.
		\end{proof}

	  	\bigskip
	
	\noindent {\bf Acknowledgements.} The authors were supported in part by NSFC.  
	Views and opinions expressed are those of the authors only and do not necessarily reflect those of the funding organizations.

	\bigskip	\noindent {\bf Data Availability Statement.} Data sharing not applicable to this article as no datasets were generated or analysed during the current study.
	
	\bigskip	\noindent {\bf Conflict of Interest.} The authors have no conflicts of interest to declare that are relevant to the content of this article.

	\bibliography{ref.bib}
	\bibliographystyle{amsplain}
	
\end{document}